\def\thm@space@setup{%
 \thm@preskip=\parskip \thm@postskip=0pt
}
\def\th@remark{%
  \thm@headfont{\itshape}%
  \normalfont % body font
  \thm@preskip\parskip \thm@postskip=0pt
}
\theoremstyle{change}
\newtheorem{Theorem}{Theorem}[section]
\newtheorem{Def}[Theorem]{Definition}
\newtheorem{Lem}[Theorem]{Lemma}
\newtheorem{Prop}[Theorem]{Proposition}
\newtheorem{Cor}[Theorem]{Corollary}
\newcommand{\id}{\mathrm{id}}
\newcommand{\Spec}{\mathrm{Spec}}
\newcommand{\N}{\mathbb{N}}
\newcommand{\Z}{\mathbb{Z}}
\newcommand{\R}{\mathbb{R}}
\newcommand{\C}{\mathbb{C}}
\newcommand{\Hsp}{\mathcal{H}}
\newcommand{\rd}{\mathrm{d}}
\newcommand{\varphiH}{\psi}
\newcommand{\alg}{\mathrm{alg}}
\DeclareMathOperator{\Mor}{\mathrm{Mor}}
\begin{document}

\title{A field of quantum upper triangular matrices}
\author{K. De Commer and M. Flor\'{e}}

\date{}
\author{Kenny De Commer\thanks{Department of Mathematics, Vrije Universiteit Brussel, VUB, B-1050 Brussels, Belgium, email: {\tt kenny.de.commer@vub.ac.be}. This work was supported by FWO grant G.0251.15N. This work is also part of the project supported by the NCN-grant 2012/06/M/ST1/00169.}\;\, and 
Matthias Flor\'{e}\thanks{Department of Mathematics, Vrije Universiteit Brussel, VUB, B-1050 Brussels, Belgium, email: {\tt matthias.flore@vub.ac.be}.} }
\maketitle

\begin{abstract} \noindent We show that the duals of Woronowicz's quantum $SU(2)$-groups converge, within the operator algebraic setting, to the group of special upper triangular 2-by-2 matrices with positive diagonal.
\end{abstract}

\section*{Introduction}

\emph{Quantum groups}, and in particular quantized enveloping algebras and their dual quantum function algebras, have manifestations in different contexts, for example purely algebraic, at either a formal or a scalar parameter, or operator algebraic, at either the C$^*$-algebraic or von Neumann algebraic level. The correspondence between the scalar, algebraic approach and the operator algebraic approach is at this moment well understood in the compact semisimple case, see e.g.~ the basic reference works \cite{ChP94, KS97, Maj95}. Also the correspondence between the algebraic formal approach and the operator algebraic approach works nicely in the compact semisimple setting, using the formalism of continuous fields of C$^*$-algebras \cite{NT11}. 

However, it is known since Drinfel'd's fundamental work  \cite{Dri88} that formal quantized enveloping algebras can also be seen as formal quantized function algebras, reflecting the Poisson duality which is present in the quasi-classical limit. The operator algebraic version of this `dual field' seems not to have received much attention yet in the compact semisimple case, although it offers an interesting case of a family of discrete `quantum' structures converging to a non-discrete, continuous classical structure. 

In this article, we want to start amending this situation. Our main aim will be to show how the family of duals of the quantum groups $SU_q(2)$ \cite{Wor87a} can be embedded into a continuous field of quantum groups over the interval $(0,1\rbrack$ such that, for $q=1$, the fiber is isomorphic to the group of special upper triangular 2-by-2 matrices with positive diagonal. 

Our point of departure will be the work of Blanchard \cite{Bla96}, where a detailed study of fields of quantum groups and fields of multiplicative unitaries was made. Combined with the work of Woronowicz \cite{Wor95} and results from \cite{KL11}, this will allow us to obtain our main theorems in quite a computation-free way. We note that in \cite[Section 7.2]{Bla96}, it was shown how the $C(SU_q(2))$ can be made into a field of C$^*$-algebras which (as quantum groups) converge to the group C$^*$-algebra of the $az+b$-group for $q\rightarrow 1$. The abstract duality theory of \cite{Bla96} can then also be used to yield a field of `quantum upper triangular matrices', but we prefer to present a more direct and explicit approach, which also seems more amenable to  generalizations to higher rank situations. We refrain from making a direct comparison with the field obtained in \cite[Section 7.2]{Bla96}. 

The precise content of our article is as follows. 

In the \emph{first section}, we construct the underlying field of C$^*$-algebras $C_0(G)$ over $(0,1\rbrack$, making use of the notion of \emph{crossed product with a partial automorphism} \cite{Exe94}. Continuity of our field will follow from a general result concerning continuous fields of such crossed products. We also give a concrete field of representations for our field of C$^*$-algebras. 

In the \emph{second section}, we use the Baaj-Woronowicz theory of affiliated ope-rators to show that $C_0(G)$ possesses a coassociative comultiplication $\Delta$ from $C_0(G)$ into $C_0(G)\underset{C_0((0,1\rbrack)}{\otimes} C_0(G)$, which is moreover bisimplifiable in the sense of \cite{Bla96}. This easily lets us conclude that $(C_0(G),\Delta)$ has an associated conti-nuous field of multiplicative unitaries. 
 
We assume in this article that all Hilbert spaces are separable. We also assume that all C$^*$-algebras (denoted by the letters $A,B,C,\ldots$) are separable, except for those which obviously can't be, such as multiplier C$^*$-algebras of non-unital C$^*$-algebras. In particular, all locally compact Hausdorff spaces are assumed second countable. 

We will call \emph{representation} of a C$^*$-algebra $A$ a non-degenerate $^*$-representation of $A$ on a Hilbert space. For $A,B$ two C$^*$-algebras, we write $\Mor(A,B)$ for the set of all non-degenerate $^*$-homomorphisms $A\rightarrow M(B)$. We call \emph{embedding} any injective map in $\Mor(A,B)$. By subalgebra we mean any non-degenerate inclusion $A\subseteq B$ of C$^*$-algebras. 

We write $\otimes$ for the tensor product between Hilbert spaces or for the minimal tensor product between C$^*$-algebras.

\section{A field of C$^*$-algebras}

\subsection{$C_0(Y)$-algebras}

Let $Y$ be a locally compact Hausdorff space. Recall that a \emph{$C_0(Y)$-algebra} is a C$^*$-algebra $A$ together with an embedding of $C_0(Y)$ into the center of $M(A)$. 

\begin{Def} The \emph{localisation} $A_y$ at $y\in Y$ is the quotient of $A$ by the closed ideal $I_y$ consisting of all $fa$ with $f\in C_0(Y\setminus \{y\})$ and $a\in A$.

\end{Def}

For $a\in A$, the image of $a$ in $A_y$ will be written $a_y$.

\begin{Def} A $C_0(Y)$-algebra $A$ is called a \emph{continuous field of C$^*$-algebras over $Y$} if the map \[y\mapsto \|a_y\|\] is continuous on $Y$ for all $a\in A$.
\end{Def} 

Let $\mathscr{F}$ be a Hilbert $C_0(Y)$-module. For $y\in Y$, we write $\mathscr{F}_y$ for the Hilbert space obtained as the separation-completion of $\mathscr{F}$ with respect to the seminorm $\|\xi\|_y = \sqrt{\langle \xi,\xi\rangle(y)}$. Write $\xi_y$ for the image of $\xi\in \mathscr{F}$ in $\mathscr{F}_y$.

\begin{Def}(\cite[D\'{e}finition 2.11]{Bla96})  A \emph{$C_0(Y)$-representation} on $\mathscr{F}$ of a $C_0(Y)$-algebra $A$  consists of a $C_0(Y)$-linear element  $\pi\in \Mor(A,\mathcal{K}(\mathscr{F}))$, where $\mathcal{K}(\mathscr{F})$ denotes the C$^*$-algebra of compact operators on $\mathscr{F}$.
\end{Def} 

In this situation, $\pi$ factorizes into representations $\pi_y$ of $A_y$ on the $\mathscr{F}_y$. 

\begin{Def}(\cite[D\'{e}finition 2.11]{Bla96})\label{DefFaithField}  A $C_0(Y)$-representation $\pi$ of a $C_0(Y)$-algebra $A$ is called a \emph{field of faithful representations} if the $\pi_y$ are faithful representations for all $y\in Y$.
\end{Def} 

This implies that $\pi$ itself is faithful since by \cite[Proposition 2.8]{Bla96} \begin{equation}\label{EqNormSup}\|a\|=\sup_{y\in Y} \|a_y\|,\qquad \forall a\in A.\end{equation}  

\begin{Theorem}\label{TheoBlanFF} A $C_0(Y)$-algebra $A$ is a continuous field of C$^*$-algebras if and only if it admits a field of faithful representations. 
\end{Theorem}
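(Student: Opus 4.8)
The plan is to prove the two implications separately, relying throughout on the elementary fact that for \emph{any} $C_0(Y)$-algebra the map $y\mapsto\|a_y\|$ is upper semicontinuous. Indeed, given $y_0$ and $\varepsilon>0$, since $\|a_{y_0}\|=\mathrm{dist}(a,I_{y_0})$ and elements $fb$ with $f\in C_0(Y)$, $f(y_0)=0$, $b\in A$ are dense in $I_{y_0}$, one may choose such $f,b$ with $\|a-fb\|\le\|a_{y_0}\|+\varepsilon$; then $\|a_y\|\le\|(a-fb)_y\|+|f(y)|\,\|b\|\le\|a-fb\|+|f(y)|\,\|b\|$, whose $\limsup$ as $y\to y_0$ is at most $\|a_{y_0}\|+\varepsilon$. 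Consequently $A$ is a continuous field if and only if $y\mapsto\|a_y\|$ is \emph{lower} semicontinuous for every $a\in A$, and it is exactly this lower semicontinuity that a field of faithful representations is designed to witness.

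\emph{Field of faithful representations $\Rightarrow$ continuous field.} Let $\pi\in\Mor(A,\mathcal{K}(\mathscr{F}))$ be the $C_0(Y)$-linear morphism with all fibres $\pi_y$ faithful, so that $\|a_y\|=\|\pi_y(a_y)\|$ for every $a$ and $y$. Fix $a\in A$ and $y_0\in Y$. For $\xi\in\mathscr{F}$ the factorisation of $\pi$ gives $\pi_y(a_y)\xi_y=(\pi(a)\xi)_y$ in $\mathscr{F}_y$, hence
\[
\|\pi_y(a_y)\xi_y\|_y^2=\langle\pi(a)\xi,\pi(a)\xi\rangle(y),\qquad \|\xi_y\|_y^2=\langle\xi,\xi\rangle(y),
\]
and both right-hand sides are continuous in $y$, being values of elements of $C_0(Y)$. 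If $\langle\xi,\xi\rangle(y_0)>0$, then on the open neighbourhood $\{y:\langle\xi,\xi\rangle(y)>0\}$ of $y_0$ the function $y\mapsto\|\pi_y(a_y)\xi_y\|_y/\|\xi_y\|_y$ is continuous and bounded above by $\|\pi_y(a_y)\|=\|a_y\|$; hence $\liminf_{y\to y_0}\|a_y\|\ge\|\pi_{y_0}(a_{y_0})\xi_{y_0}\|_{y_0}/\|\xi_{y_0}\|_{y_0}$. Since the vectors $\xi_{y_0}$ with $\xi\in\mathscr{F}$ are dense in $\mathscr{F}_{y_0}$ and $\pi_{y_0}$ is non-degenerate, taking the supremum over all such $\xi$ yields $\liminf_{y\to y_0}\|a_y\|\ge\|\pi_{y_0}(a_{y_0})\|=\|a_{y_0}\|$. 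Together with upper semicontinuity this is the continuity of $y\mapsto\|a_y\|$, so $A$ is a continuous field.

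\emph{Continuous field $\Rightarrow$ field of faithful representations.} Here I would build the representation by a module GNS construction. Suppose one has a sequence of positive $C_0(Y)$-linear maps $E_k\colon A\to C_0(Y)$ such that for each $y$ the functionals $\varphi_{k,y}$ induced on $A_y$ (well defined by $C_0(Y)$-linearity) satisfy: $\varphi_{k,y}(c^{*}c)=0$ for all $k$ implies $c=0$. Separating and completing $A$ with respect to $\langle a,b\rangle_k=E_k(a^{*}b)$ gives a Hilbert $C_0(Y)$-module $\mathscr{F}_k$ on which left multiplication defines a non-degenerate $C_0(Y)$-linear $\pi_k\in\Mor(A,\mathcal{K}(\mathscr{F}_k))$, i.e.\ a $^{*}$-homomorphism into $M(\mathcal{K}(\mathscr{F}_k))$, whose fibre $(\pi_k)_y$ is the GNS representation of $\varphi_{k,y}$. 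Taking $\mathscr{F}=\bigoplus_k\mathscr{F}_k$ and $\pi=\bigoplus_k\pi_k$, the fibre $\pi_y=\bigoplus_k(\pi_k)_y$ has trivial kernel, so $\pi$ is a field of faithful representations. The hard part is the construction of the sequence $(E_k)$, which is where the hypothesis that $A$ is a continuous field must enter: one would fix a dense sequence in $A$ and a countable base of relatively compact open sets of $Y$, construct for each pairing a locally defined positive map detecting the given element, and patch these with functions in $C_0(Y)$ supported near the given open set, using separability of $A$ and second countability of $Y$ to make the countably many resulting maps collectively faithful on every fibre. This step is essentially the classical realisation theorem for continuous fields of C$^{*}$-algebras (Dixmier--Douady), on which Blanchard's framework also rests, so it may alternatively simply be invoked.
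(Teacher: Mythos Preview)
Your argument is correct, but you should know that the paper does not give an independent proof of this statement at all: it simply cites \cite[Th\'{e}or\`{e}me~3.3.(1)$\Leftrightarrow$(4)]{Bla96}. So any comparison of approaches is really a comparison with Blanchard's original.

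Your forward implication is a clean, self-contained proof. The observation that both $\langle\pi(a)\xi,\pi(a)\xi\rangle$ and $\langle\xi,\xi\rangle$ lie in $C_0(Y)$, so that the quotient is continuous where the denominator is nonzero, is exactly the right mechanism for extracting lower semicontinuity from a field of representations; combined with the standard upper-semicontinuity argument you give, this direction is complete and would stand on its own in the paper. Blanchard's route to the same conclusion (his (4)$\Rightarrow$(1)) passes through an intermediate characterisation in terms of a conditional expectation onto $C_0(Y)$ and is less direct than yours.

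For the reverse implication you give a correct architecture (a countable family of $C_0(Y)$-valued positive maps whose GNS modules assemble into the desired $\mathscr{F}$), but you acknowledge that producing the $E_k$ with fibrewise joint faithfulness is the substantive step and that you would ultimately invoke the Dixmier--Douady/Blanchard realisation theorem. That is honest and adequate, but it means this half of your proposal is, like the paper's, effectively a citation. If you wanted to make it self-contained, the point to spell out is how continuity of $y\mapsto\|a_y\|$ lets you glue locally defined states (obtained from states on fibres via Hahn--Banach) into global positive $C_0(Y)$-linear maps using a partition of unity, with second countability of $Y$ and separability of $A$ ensuring countably many suffice.
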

\begin{proof} This is \cite[Th\'{e}or\`{e}me 3.3.(1)$\Leftrightarrow$(4)]{Bla96}.
\end{proof}

\begin{Lem}\label{LemField1} Let $H,Y$ be locally compact Hausdorff spaces. Let $\pi: H \twoheadrightarrow Y$ be continuous and open, and write $H_y = \pi^{-1}(y)$ for $y\in Y$.

Then $C_0(Y)\subseteq M(C_0(H))$ makes $C_0(H)$ into a continuous field of C$^*$-algebras over $Y$, and the natural maps $C_0(H)_y \rightarrow C_0(H_y)$ are isomorphisms.
\end{Lem}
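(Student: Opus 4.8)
The plan is to work entirely with commutative C$^*$-algebras and point-set topology, reducing everything to statements about $C_0$-functions on $H$ and their restrictions to the fibres $H_y=\pi^{-1}(y)$, which are closed in $H$ since $Y$ is Hausdorff and $\pi$ is continuous.

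First I would pin down the $C_0(Y)$-algebra structure. The pullback $f\mapsto f\circ\pi$ maps $C_0(Y)$ into $C_b(H)=M(C_0(H))$, clearly into the centre; it is injective because $\pi$ is surjective, and non-degenerate because every $g\in C_0(H)$ is approximated by compactly supported functions $g'$, for which $(f\circ\pi)g'=g'$ as soon as $f\in C_0(Y)$ equals $1$ on a neighbourhood of the compact set $\pi(\mathrm{supp}\,g')$. So $C_0(H)$ is a $C_0(Y)$-algebra. Next comes the key identification $I_y=\{g\in C_0(H):g|_{H_y}=0\}$, which as a subalgebra is $C_0(H\setminus H_y)$. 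The inclusion $\subseteq$ is immediate, since $f\circ\pi$ vanishes on $H_y$ for $f\in C_0(Y\setminus\{y\})$ and the functions vanishing on $H_y$ form a closed ideal. For $\supseteq$, take such a $g$ and $\varepsilon>0$: the set $K_\varepsilon=\{x\in H:|g(x)|\ge\varepsilon\}$ is compact and, because $g$ vanishes on $H_y=\pi^{-1}(y)$, misses $H_y$, so $\pi(K_\varepsilon)$ is a compact subset of $Y$ not containing $y$; choosing $f_\varepsilon\in C_0(Y\setminus\{y\})$ with $0\le f_\varepsilon\le1$ and $f_\varepsilon\equiv1$ on $\pi(K_\varepsilon)$ gives $\|g-(f_\varepsilon\circ\pi)g\|\le\varepsilon$, whence $g\in I_y$. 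Since $H_y$ is closed, the restriction map $C_0(H)\to C_0(H_y)$ is onto by Tietze's extension theorem for locally compact Hausdorff spaces, with kernel exactly $I_y$; this yields the natural isomorphism $C_0(H)_y\xrightarrow{\ \cong\ }C_0(H_y)$, $g_y\mapsto g|_{H_y}$.

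It remains to prove continuity of $y\mapsto\|g_y\|$. Under the identification above, $\|g_y\|=\varphi(y):=\sup_{x\in H_y}|g(x)|$, and I would show $\varphi$ is both upper and lower semicontinuous. For upper semicontinuity (which does not use openness): if $\varphi(y_0)<c$, choose $c'\in(\varphi(y_0),c)$; then $\{|g|\ge c'\}$ is compact and disjoint from $H_{y_0}$, so $y_0$ lies in the open set $Y\setminus\pi(\{|g|\ge c'\})$, on which $\varphi\le c'<c$. Lower semicontinuity is where openness of $\pi$ is used: if $\varphi(y_0)>c$, pick $x_0\in H_{y_0}$ with $|g(x_0)|>c$; then $W=\{x\in H:|g(x)|>c\}$ is open, $\pi(W)$ is an open neighbourhood of $y_0$, and $\varphi(y)>c$ for every $y\in\pi(W)$. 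Hence $\varphi$ is continuous, so $C_0(H)$ is a continuous field of C$^*$-algebras over $Y$; alternatively, once the localisations are identified one may instead invoke Theorem \ref{TheoBlanFF} with a concrete field of representations.

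The only genuine obstacle is the last point: extracting lower semicontinuity of $y\mapsto\sup_{H_y}|g|$ from the openness of $\pi$. Everything else — the identification of $I_y$ and the surjectivity of the restriction map — is routine once the compactness of the sublevel complements $\{|g|\ge\varepsilon\}$ of $C_0$-functions is exploited, and the hypothesis of openness is not needed there.
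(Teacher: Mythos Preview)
Your proof is correct and self-contained. The paper, by contrast, does not prove this lemma at all: it simply cites \cite[Proposition 3.14]{Bla96} and the discussion preceding it. What you have written is essentially the standard elementary argument behind that reference: identify $I_y$ with $C_0(H\setminus H_y)$ via a Urysohn-type approximation using the compact sets $\{|g|\ge\varepsilon\}$, invoke Tietze to get surjectivity of the restriction map, and then prove continuity of $y\mapsto\sup_{H_y}|g|$ by hand, with openness of $\pi$ entering only in the lower-semicontinuity step. The advantage of your approach is that it makes the lemma genuinely self-contained and transparent about where each hypothesis is used; the paper's approach simply outsources the work. Your closing remark about alternatively invoking Theorem~\ref{TheoBlanFF} is also apt, though the direct semicontinuity argument you give is cleaner here than manufacturing a field of faithful representations.
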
 
\begin{proof} See \cite[Proposition 3.14]{Bla96} and the discussion preceding it. 
\end{proof}

If $A$ is a $C_0(Y)$-algebra, we say that an automorphism $\alpha$ of $A$ is a \emph{$C_0(Y)$-automorphism} if $\alpha$ is $C_0(Y)$-linear. For $B\subseteq A$ an inclusion of C$^*$-algebras, we call \emph{conditional expectation of $A$ onto $B$} a $B$-bilinear completely positive map $F: A \rightarrow B$ of norm $1$.

\begin{Lem}\label{LemField3}Let $A$ be a $C_0(Y)$-algebra. Let $G$ be a compact Hausdorff group acting continuously on $A$ by $C_0(Y)$-automorphisms. Let $A^G$ be the fixed point subalgebra of $A$ with respect to $\alpha$. 

Then $A^G$ is a $C_0(Y)$-algebra, the action of $G$ descends to each fiber $A_y$, and \[(A_y)^G \cong (A^G)_y.\] Moreover, $A$ is a continuous field of C$^*$-algebras if and only if $A^G$ is a continuous field of C$^*$-algebras.
\end{Lem}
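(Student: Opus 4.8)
The plan is to reduce every assertion to an averaging argument over the compact group $G$ together with the fiberwise description already available.

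First I would verify that $A^G$ is a $C_0(Y)$-algebra. Since the $G$-action is by $C_0(Y)$-automorphisms, every element of $C_0(Y)\subseteq M(A)$ is fixed, hence $C_0(Y)$ lands in $M(A^G)$; the inclusion into the center is inherited from $A$, and non-degeneracy follows because the Haar average $F(a)=\int_G \alpha_g(a)\,\rd g$ is a conditional expectation of $A$ onto $A^G$ which is $C_0(Y)$-bilinear, so $C_0(Y)A^G$ is dense in $A^G$. This same conditional expectation $F$ is the main technical tool throughout.

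Next, to see that the action descends to $A_y$ and that $(A_y)^G\cong (A^G)_y$: because each $\alpha_g$ is $C_0(Y)$-linear it preserves the ideal $I_y=\overline{C_0(Y\setminus\{y\})A}$, hence induces an automorphism of $A_y$, and $g\mapsto \alpha_g$ remains continuous on the quotient. The conditional expectation $F$ descends to a conditional expectation $F_y$ of $A_y$ onto the image of $A^G$ in $A_y$, and one checks $F_y$ is exactly the Haar average for the induced $G$-action, so $(A^G)_y = F_y(A_y) = (A_y)^G$. The only point requiring a little care is the surjection $A^G \twoheadrightarrow (A_y)^G$: given $b\in (A_y)^G$, lift it to some $a\in A$ with $a_y=b$, then $F(a)\in A^G$ satisfies $F(a)_y = F_y(a_y) = F_y(b) = b$; so the natural map $(A^G)_y\to(A_y)^G$ is onto, and it is injective because $I_y\cap A^G = F(I_y) \subseteq I_y$ is precisely the kernel.

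For the last claim, I would invoke Theorem~\ref{TheoBlanFF}. If $A$ is a continuous field, take a field of faithful representations $\pi$ of $A$ on a Hilbert $C_0(Y)$-module $\mathscr F$; its restriction to $A^G$ is a $C_0(Y)$-representation whose fibers $(\pi|_{A^G})_y$ are the restrictions of the faithful $\pi_y$ to $(A^G)_y\cong (A_y)^G\subseteq A_y$, hence faithful, so $A^G$ is a continuous field. Conversely, if $A^G$ is a continuous field, I would use that $A$ is a (finitely generated projective, by compactness of $G$) Hilbert $A^G$-module via $\langle a,b\rangle = F(a^*b)$; equivalently, decomposing the $G$-action into isotypical components $A = \bigoplus_{\rho} A_\rho$ with each $A_\rho$ a Hilbert $A^G$-module, one has $A_y = \bigoplus_\rho (A_\rho)_y$ compatibly, and a field of faithful representations of $A^G$ induces one of $A$ by an induced-representation construction (tensoring the module $A$ over $A^G$). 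The expected main obstacle is precisely this converse direction: making the "$A$ is a continuous field of Hilbert modules over $A^G$" argument rigorous at the level of localisations — i.e.\ checking that the isotypical pieces localise correctly and that the induced field of representations of $A$ is again a field of \emph{faithful} representations. I would handle it by reducing to the spectral subspaces for matrix coefficients of irreducibles of $G$, each of which is a finitely generated projective $A^G$-module, so that localisation commutes with the direct sum and preserves faithfulness fiber by fiber.
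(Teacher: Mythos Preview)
Your treatment of the first three assertions --- that $A^G$ is a $C_0(Y)$-algebra, that the action descends fiberwise, and that $(A^G)_y\cong(A_y)^G$ --- follows the same averaging-by-Haar-measure argument as the paper, and is correct. The surjectivity and injectivity of $(A^G)_y\to(A_y)^G$ are handled just as the paper does.

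For the continuity equivalence you take a different route. The paper does not go through Theorem~\ref{TheoBlanFF} (fields of faithful representations) at all; instead it simply records the commuting square
\[
\xymatrix{ A \ar[r]\ar[d]_{F} & A_y\ar[d]^{F_y} \\ A^G\ar[r] & A_y^G}
\]
with faithful conditional expectations on the vertical arrows, and invokes a different equivalence in Blanchard's theorem, namely \cite[Th\'{e}or\`{e}me 3.3.(1)$\Leftrightarrow$(3)]{Bla96}, which characterises continuous fields via the existence of a fiberwise-faithful conditional expectation onto a continuous subfield. This gives both directions at once with no further work.

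Your forward direction (restricting a faithful field from $A$ to $A^G$) is fine. In the converse direction, however, the parenthetical claim that $A$ is a \emph{finitely generated projective} $A^G$-module ``by compactness of $G$'' is false in general: already for $G=S^1$ one has infinitely many isotypical components, and $A$ need not be finitely generated over $A^G$. The isotypical decomposition you then invoke does not repair this without further hypotheses (each $A_\rho$ need not be finitely generated projective either). The salvageable core of your idea is simply to view $A$ as a Hilbert $A^G$-module via $\langle a,b\rangle=F(a^*b)$ and form the interior tensor product with a given faithful field $\mathscr{E}$ for $A^G$; faithfulness of the resulting fiber representations follows from faithfulness of $F_y$ and of $\sigma_y$, with no need for finite generation or isotypical bookkeeping. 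But once you see that the whole argument rests on the fiberwise-faithful conditional expectation, it is cleaner to quote Blanchard's criterion (3) directly, as the paper does.
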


\begin{proof} Denote the action of $G$ on $A$ by $\alpha$. It is immediate that $A^G$ is a $C_0(Y)$-subalgebra of $A$, and that $\alpha$ descends to each fiber $A_y$. 

For each $y\in Y$, we obtain a $^*$-homomorphism $\rho: (A^G)_y \rightarrow (A_y)^G$. Let \[F: A \rightarrow A^G,\quad a \mapsto \int_G \alpha_g(a)\rd g,\] so that $F$ is a faithful conditional expectation of $A$ onto $A^G$. For $b_y \in (A_y)^G$, we have $F(b)\in A^G$ with \[\rho(F(b)_y) =  \left(\int_G\alpha_g(b)\rd g\right)_y = \int_G\alpha_g(b_y)\rd g=   b_y,\] hence $\rho$ is surjective. On the other hand, if $a\in A^G$ and $a_y=0$ as an element of $A_y$, then $a = a'f$ with $a'\in A$ and $f\in C_0(Y\setminus\{y\})$. But then $a = F(a')f$, hence $a_y=0$ as an element of $(A^G)_y$. This shows that $\rho$ is injective, hence a $^*$-isomorphism. 

From the above, it follows that for each $y$ we have a faithful conditional expectation \[F_y: A_y \rightarrow A_y^G,\quad a_y\mapsto \int_G \alpha_g(a_y)\rd g,\] leading to a commutative diagram \[\xymatrix{ A \ar[r]\ar[d]_{F} & A_y\ar[d]^{F_y} \\ A^G\ar[r] & A_y^G.}\] We can then conclude the last statement of the Lemma from the equivalence \cite[Th\'{e}or\`{e}me 3.3.(1) $\Leftrightarrow$ (3)]{Bla96}.
\end{proof}

\subsection{Crossed products with partial automorphisms}\label{SubSecPartAut}

We recall some of the results of \cite{Exe94} in the case of commutative C$^*$-algebras. 

Let $H$ be a locally compact space with two open subsets $H^{(1)},H^{(-1)}\subseteq H$ together with a homeomorphism \[\theta: H^{(1)}\rightarrow H^{(-1)}.\] We call $\Theta = (\theta,H^{(1)},H^{(-1)})$ a \emph{partial automorphism} of $H$. 

Since $H^{(1)}$ and $H^{(-1)}$ are open sets in $H$, we can consider $C_0(H^{(1)})\subseteq C_0(H)$ and $C_0(H^{(-1)})\subseteq C_0(H)$ as closed  ideals. Then $\theta$ gives rise to a $^*$-isomorphism \[\theta: C_0(H^{(-1)})\rightarrow C_0(H^{(1)}),\quad f\mapsto f\circ \theta,\] for which we use the same notation $\theta$. The triple $\Theta = (\theta,C_0(H^{(-1)}),C_0(H^{(1)}))$ is a \emph{partial automorphism} of $C_0(H)$ in the sense of \cite{Exe94}. This leads to the crossed product $C_0(H)\rtimes_{\Theta}\Z$ in the sense of \cite[Definition 3.7]{Exe94}.

To get a more concrete description of $C_0(H)\rtimes_{\Theta}\Z$, consider more generally for $k\in \Z$ the open set $H^{(k)}\subseteq H$ which is the domain of $\theta^k$, and consider $\theta^k$ as a homeomorphism \[\theta^k: H^{(k)}\rightarrow H^{(-k)}.\] Consider for $k\in \Z$ the sets \[\mathscr{N}_k = \{\textrm{formal symbols }s^kf\mid f\in C_0(H^{(k)})\}.\] Then $\oplus_{k\in \Z} \mathscr{N}_k$ can be turned into a $^*$-algebra by the formulas \[(s^kf)(s^lg) = s^{k+l}\theta^l(f\theta^{-l}(g)),\quad (s^kf)^* = s^{-k} \theta^{-k}(f^*).\] The C$^*$-algebra $C_0(H)\rtimes_{\Theta}\Z$ is by definition the universal C$^*$-algebraic envelope of $\oplus_{k\in \Z} \mathscr{N}_k$. Moreover, by \cite[Proposition 3.11]{Exe94} the map \[\oplus_{k\in \Z} \mathscr{N}_k \rightarrow C_0(H)\rtimes_{\Theta}\Z\] is injective. In particular, we have an inclusion $C_0(H) \rightarrow C_0(H)\rtimes_{\Theta} \Z$. In the following, we write $s^0f = f$ for $f \in C_0(H)$. 

Note that $\mathscr{N} = \mathscr{N}_1$ is naturally a Hilbert bimodule over $C_0(H)$ by \[\langle x,y\rangle_r = x^*y,\quad \langle x,y\rangle_l = xy^*.\] This Hilbert bimodule completely determines $C_0(H)\rtimes_{\Theta}\Z$.

\begin{Prop}\label{PropDefRel} The C$^*$-algebra $C_0(H)\rtimes_{\Theta}\Z$ is the universal C$^*$-algebra generated by a copy of the C$^*$-algebra $C_0(H)$ and the vector space $\mathscr{N}$ with defining relations, for $f\in C_0(H)$ and $x,y\in \mathscr{N}$, \[x\cdot f = xf,\quad f\cdot x = fx,\quad  x^*\cdot y = \langle x,y\rangle_r,\quad x\cdot y^* = \langle x,y\rangle_l.\]
\end{Prop}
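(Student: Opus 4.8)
The plan is to identify $C_0(H)\rtimes_\Theta\Z$ with the universal $C^*$-algebra $B$ described in the statement by exhibiting mutually inverse morphisms. First I would check that the generators and relations in the statement are consistent, i.e.\ that $B$ actually exists: the relations force $\|f\cdot x\|^2 = \|(fx)^*(fx)\| = \|\langle fx, fx\rangle_r\| \le \|f\|^2\|x\|_r^2$ (where $\|x\|_r^2 = \|x^*x\| = \|\langle x,x\rangle_r\|$ computed inside $C_0(H)$), and similarly on the other side, so every element of the form $\sum_{\text{finite}} (\text{word in }C_0(H)\text{ and }\mathscr{N})$ has bounded image under any $^*$-representation; hence the universal $C^*$-seminorm is finite and $B$ is a genuine $C^*$-algebra with a canonical map from $C_0(H)$ and a linear map from $\mathscr{N}$.

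Next, the forward map $C_0(H)\rtimes_\Theta\Z \to B$. By construction $C_0(H)\rtimes_\Theta\Z$ is the universal envelope of $\bigoplus_k \mathscr{N}_k$, so it suffices to produce a $^*$-homomorphism on the dense $^*$-subalgebra $\bigoplus_k \mathscr{N}_k$ landing in $B$. Send $s^1 f \in \mathscr{N}_1 = \mathscr{N}$ to its image in $B$ and $s^0 f = f$ to its image in $B$; this already generates, and one must then argue that the higher $\mathscr{N}_k$ are automatically accounted for. This is where I expect the first real work: one shows inside $B$ that the products $x_1\cdots x_k$ with $x_i\in\mathscr{N}$ span a subspace canonically identified with $\mathscr{N}_k$, and that the relations $(s^kf)(s^lg) = s^{k+l}\theta^l(f\theta^{-l}(g))$ and $(s^kf)^* = s^{-k}\theta^{-k}(f^*)$ hold for these products. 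The key algebraic identity to verify is that for $x\in\mathscr{N}$, $f\in C_0(H)$, one has $x f = \theta(\text{(appropriate restriction of) }f)\, x$ inside $B$ — i.e.\ that left and right module actions are intertwined by $\theta$ — which should follow from $x^*\cdot(fx) = \langle x, fx\rangle_r = x^*fx$ together with $(fx)\cdot x^* = fxx^* = f\langle x,x\rangle_l$ and the density/faithfulness of $C_0(H)$ in $B$; here Proposition 3.11 of \cite{Exe94} (injectivity of $\bigoplus_k\mathscr{N}_k \to C_0(H)\rtimes_\Theta\Z$) controls the source side. Universality of $C_0(H)\rtimes_\Theta\Z$ then gives a genuine $^*$-homomorphism $\Phi: C_0(H)\rtimes_\Theta\Z \to B$.

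Conversely, the backward map $B \to C_0(H)\rtimes_\Theta\Z$: inside $C_0(H)\rtimes_\Theta\Z$ we have the copy of $C_0(H)$ and the copy of $\mathscr{N} = \mathscr{N}_1$, and one checks directly from the $^*$-algebra structure on $\bigoplus_k\mathscr{N}_k$ that these satisfy the four defining relations of $B$ — e.g.\ $(s^1 x)\cdot f = s^1 x f$ which is exactly $xf$ in the notation of the Hilbert bimodule, $(s^1 x)^*\cdot(s^1 y) = (s^{-1}\theta^{-1}(x^*))(s^1 y) = s^0\theta(\theta^{-1}(x^*)\cdot\theta^{-1}(y))\cdot(\ldots)$, which simplifies to $x^*y = \langle x,y\rangle_r$, and similarly for $\langle x,y\rangle_l$. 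Universality of $B$ yields $\Psi: B \to C_0(H)\rtimes_\Theta\Z$.

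Finally I would check $\Phi\Psi = \id$ and $\Psi\Phi = \id$ on generators — both compositions fix $C_0(H)$ and fix $\mathscr{N}$ pointwise by construction, and since $C_0(H)$ and $\mathscr{N}$ generate each algebra as a $C^*$-algebra, the identities follow. The main obstacle, as flagged above, is the middle step: verifying that the abstractly presented $B$ really does recover the full graded structure $\bigoplus_k\mathscr{N}_k$ and not something larger or with collapsed grading — concretely, showing that the span of length-$k$ products of elements of $\mathscr{N}$ inside $B$ is naturally isomorphic to $\mathscr{N}_k = s^k C_0(H^{(k)})$, with the subtlety that $H^{(k)}$ is only the domain of $\theta^k$, so products like $xy$ with $x,y\in\mathscr{N}$ live in $C_0(H^{(2)})$ rather than all of $C_0(H)$. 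This bookkeeping with the nested open sets $H^{(k)}$, done via the module actions in $B$, is the crux; everything else is formal manipulation with universal properties. (In fact this is essentially a special case of the general principle that a crossed product by a partial automorphism is the $C^*$-algebra of the associated Hilbert bimodule, so one could alternatively cite the bimodule description and the Fell-bundle/Toeplitz picture of \cite{Exe94} directly.)
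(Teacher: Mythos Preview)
Your strategy is reasonable, but the paper's proof is a one-line citation: the statement is precisely \cite[Example~3.2]{AEE98}, where the crossed product by a partial automorphism is identified with the crossed product by its associated Hilbert $C^*$-bimodule, and the latter is by definition the universal $C^*$-algebra on the generators and relations listed. Your closing parenthetical is therefore already the paper's entire argument.

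As for the direct construction you outline, the backward map $\Psi: B \to C_0(H)\rtimes_\Theta\Z$ is indeed immediate from universality of $B$, and it is surjective since $\mathscr{N}_0$ and $\mathscr{N}_1$ generate. The difficulty you flag---that $B$ might have a collapsed or inflated grading---is genuine, and the algebraic manipulation you propose does not settle it: the compatibility $x\cdot f = (f\circ\theta^{-1})\cdot x$ is already encoded in the bimodule structure of $\mathscr{N}$ and hence in the defining relations of $B$, so there is nothing to verify there, while the relations of $B$ impose nothing on products $x\cdot y$ with $x,y\in\mathscr{N}$, which is exactly why well-definedness of $\Phi$ on $\mathscr{N}_k$ for $|k|\ge 2$ is not automatic. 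The clean way to close the gap is not to build $\Phi$ at all but to prove $\Psi$ injective directly: by universality $B$ carries a gauge $S^1$-action fixing $C_0(H)$ and scaling $\mathscr{N}$, the map $\Psi$ is equivariant for this and the dual action, the relations force $B^{S^1}=C_0(H)$ (since $x^*y,\,xy^*\in C_0(H)$ already), and then injectivity follows from faithfulness of the two conditional expectations. That argument is precisely what \cite{AEE98} packages, so either route lands on the same citation.
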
 
\begin{proof} This follows from \cite[Example 3.2]{AEE98}.
\end{proof} 

Denote by $S^1 =\{z\in \C\mid |z|=1\} \subseteq \C$ the circle group. 

\begin{Def}(\cite[Section 3]{AEE98}) The \emph{dual action} $\beta$ of $S^1$ on $C_0(H)\rtimes_{\Theta}\Z$ is defined by \[\beta_z(x) = z^k x,\quad x\in \mathscr{N}_k,z\in S^1.\] 
\end{Def} 

\begin{Prop} The $k$-th spectral subspace with respect to $\beta$ is $\mathscr{N}_k$. Moreover, for $k> 0$, \[\mathscr{N}_k = \mathscr{N}_1^k,\quad \mathscr{N}_{-k} = \mathscr{N}_{-1}^{k}.\]
\end{Prop}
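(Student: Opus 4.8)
The plan is to prove the two assertions separately, handling the spectral subspace identification first and then the multiplicativity of the modules $\mathscr{N}_k$.

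\textbf{Step 1: the $k$-th spectral subspace is $\mathscr{N}_k$.}
Recall that for the dual action $\beta$ of $S^1$, the $k$-th spectral subspace is $\{a \in C_0(H)\rtimes_\Theta \Z : \beta_z(a) = z^k a \text{ for all } z \in S^1\}$, which can be computed via the projection $a \mapsto \int_{S^1} z^{-k}\beta_z(a)\,\rd z$ onto this subspace. Since the inclusion $\oplus_{k\in\Z}\mathscr{N}_k \hookrightarrow C_0(H)\rtimes_\Theta\Z$ is injective with dense range, and each $\mathscr{N}_k$ is visibly contained in the $k$-th spectral subspace by the definition of $\beta$, it suffices to show that the closure of $\mathscr{N}_k$ inside the crossed product exhausts the spectral subspace. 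This follows because the averaging projection $a \mapsto \int_{S^1} z^{-k}\beta_z(a)\,\rd z$ is norm-decreasing and maps the dense subalgebra $\oplus_j \mathscr{N}_j$ onto $\mathscr{N}_k$ (it simply picks out the degree-$k$ component), so its range on the whole crossed product is the closure of $\mathscr{N}_k$. Hence the $k$-th spectral subspace equals the closure of $\mathscr{N}_k$; since $\mathscr{N}_k \cong C_0(H^{(k)})$ is already complete, it is closed, and we are done.

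\textbf{Step 2: $\mathscr{N}_k = \mathscr{N}_1^k$ for $k>0$ (and the adjoint statement).}
Here one argument is a direct computation: from the multiplication rule $(s f)(s g) = s^2 \theta(f\theta^{-1}(g))$ one checks by induction that products of $k$ elements of $\mathscr{N}_1$ land in $\mathscr{N}_k$ and that the resulting set of products is $\{s^k h : h \in C_0(H^{(k)})\}$. The key point is surjectivity: given $h \in C_0(H^{(k)}) = C_0(\theta^{-(k-1)}(H^{(-1)})\cap\cdots)$, one must write $h$ (or approximate it) as a sum of terms $\theta^{k-1}(f_1 \cdot \theta^{-1}(f_2 \cdot \theta^{-1}(\cdots)))$ with $f_i \in C_0(H^{(1)})$. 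Since $C_0(H^{(1)})$ is an ideal whose product with itself is dense in itself (a C$^*$-algebra factorizes, by Cohen factorization, or just by an approximate unit argument), and $\theta^{\pm 1}$ are $^*$-isomorphisms between the relevant ideals, an induction on $k$ shows $\mathscr{N}_1^k$ is dense in $\mathscr{N}_k$; being a spectral subspace intersected with a finitely generated product, one then upgrades density to equality using that $\mathscr{N}_k$ is a (closed, by Step 1) Hilbert bimodule and $\mathscr{N}_1^k$ is a submodule that is dense and hence all of it. The statement for $\mathscr{N}_{-k}$ follows by applying the $^*$-operation, which sends $\mathscr{N}_1$ to $\mathscr{N}_{-1}$ and reverses products.

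\textbf{Main obstacle.}
The delicate point is Step 2's surjectivity/density claim: one needs that the algebraic products of $k$ copies of the ideal $C_0(H^{(1)})$, transported by the partial homeomorphism $\theta$, genuinely recover all of $C_0(H^{(k)})$ up to closure, rather than some smaller ideal. This is really a statement about the open sets: $H^{(k)}$ is by construction the domain of $\theta^k$, i.e.\ $H^{(k)} = \bigcap_{j=0}^{k-1}\theta^{-j}(H^{(1)})$ (interpreting $\theta^0 = \id$), so a partition-of-unity argument on this open subset of $H$, together with the fact that $C_0$ of an open set is an ideal that factorizes, yields the density. Everything else is bookkeeping with the multiplication formula; I would present Step 1 in full and Step 2 with the factorization/approximate-unit argument made explicit, since that is where the content lies.
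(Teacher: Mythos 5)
Your argument is correct, but note that the paper does not reprove this statement at all: its ``proof'' is a citation to Exel's Propositions 3.11, 4.7 and 4.8, of which your write-up is essentially a direct, commutative-case reconstruction. Step 1 is the standard Fourier-coefficient argument: the projection $E_k(a)=\int_{S^1}z^{-k}\beta_z(a)\,\rd z$ has range exactly the $k$-th spectral subspace, maps the dense subalgebra $\oplus_j\mathscr{N}_j$ onto $\mathscr{N}_k$, and continuity gives that the spectral subspace is the closure of $\mathscr{N}_k$. Two small points you should make explicit rather than gloss: (i) the integral requires the dual action to be strongly continuous, and (ii) ``$\mathscr{N}_k\cong C_0(H^{(k)})$ is complete, hence closed'' needs that the identification is \emph{isometric} for the crossed-product norm; this follows from $(s^kf)^*(s^kf)=f^*f$ together with the fact that $C_0(H)$ sits isometrically inside $C_0(H)\rtimes_{\Theta}\Z$ (e.g.\ via the faithful conditional expectation $F$), so $\|s^kf\|=\|f\|$. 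In Step 2 the inclusion $\mathscr{N}_1^k\subseteq\mathscr{N}_k$ is immediate from the multiplication rule, and for the reverse inclusion your identification $H^{(k)}=\bigcap_{j=0}^{k-1}\theta^{-j}(H^{(1)})$ (preimages under the partial map) plus the fact that for closed ideals $I,J$ the product $IJ$ is dense in $I\cap J$, transported by the isomorphisms $\theta^{\pm1}$, gives density of the span of products in $C_0(H^{(k)})$; since the powers are by the paper's convention already closed spans, density upgrades to equality, and the $^*$-operation handles $\mathscr{N}_{-k}=\mathscr{N}_{-1}^k$. The induction in Step 2 is only sketched, but the ideas named are the right ones, so what your approach buys over the paper's is self-containedness at the cost of spelling out what is already in \cite{Exe94}.
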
  
The powers designate the closure of the linear span of $k$-fold products. 
\begin{proof} See \cite[Proposition 3.11, Proposition 4.7 and Proposition 4.8]{Exe94}. 
\end{proof} 

\begin{Def} We define $F$ to be the faithful conditional expectation \[F:C_0(H)\rtimes_{\Theta} \Z \rightarrow C_0(H),\quad x\mapsto \int_{S^1} \beta_{z}(x)\rd z.\] 

\end{Def} 
\subsection{Fields of crossed products}\label{SecFieldCross}

Assume that $Y,H$ are locally compact spaces with $\pi: H \twoheadrightarrow Y$ continuous and open. Assume further that $\Theta = (\theta,H^{(1)},H^{(-1)})$ is a partial automorphism of $H$, and assume that $\pi \circ \theta = \pi_{\mid H^{(1)}}$. Write \[H_y^{(k)} = H^{(k)}\cap \pi^{-1}(y).\] By our assumptions, $\theta$ restricts to a homeomorphism $\theta_y: H_y^{(1)}\rightarrow H_y^{(-1)}$, and we obtain a partial homeomorphism $\Theta_y = (\theta_y,H^{(1)}_y,H^{(-1)}_y)$ of $H_y$. 

\begin{Prop}\label{PropFieldCross} The C$^*$-algebra $C_0(H) \rtimes_{\Theta} \Z$ is a continuous field of C$^*$-algebras over $Y$ with fibers $C_0(H_y)\rtimes_{\Theta_y} \Z$.
\end{Prop}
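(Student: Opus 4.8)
The plan is to realise $A := C_0(H)\rtimes_{\Theta}\Z$ as a $C_0(Y)$-algebra carrying the dual circle action, and then to reduce the continuity statement to Lemmas \ref{LemField1} and \ref{LemField3} and the identification of the fibres to the universal property of Proposition \ref{PropDefRel}.

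First I would check that $A$ is a $C_0(Y)$-algebra. We have central embeddings $C_0(Y)\hookrightarrow M(C_0(H))\hookrightarrow M(A)$, so it suffices to see that $C_0(Y)$ commutes with the generating subspace $\mathscr{N}$. For $h\in C_0(Y)$ acting on $C_0(H)$ as multiplication by $h\circ\pi$, the hypothesis $\pi\circ\theta = \pi_{\mid H^{(1)}}$ gives $\theta\big((h\circ\pi)\,g\big) = (h\circ\pi)\,\theta(g)$ for $g\in C_0(H^{(-1)})$, and combined with the relations $x\cdot f = xf$, $f\cdot x = fx$ of Proposition \ref{PropDefRel} this yields $h\cdot x = x\cdot h$ for all $x\in\mathscr{N}$; hence $C_0(Y)\subseteq Z(M(A))$. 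Next, the (strongly continuous) dual action $\beta$ of $S^1$ fixes $C_0(H)\supseteq C_0(Y)$ pointwise, so it acts by $C_0(Y)$-automorphisms, and its fixed point algebra is the $0$-th spectral subspace $\mathscr{N}_0 = C_0(H)$. By Lemma \ref{LemField1}, $C_0(H)$ is a continuous field of C$^*$-algebras over $Y$ with fibres $C_0(H_y)$, so Lemma \ref{LemField3} applied with $G = S^1$ shows at once that $A$ is a continuous field over $Y$, and moreover produces canonical isomorphisms $(A_y)^{S^1}\cong (A^{S^1})_y\cong C_0(H_y)$.

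It remains to identify $A_y$ with $C_0(H_y)\rtimes_{\Theta_y}\Z$. Since $\pi$ is open, $\pi_{\mid H^{(1)}}$ is an open continuous surjection onto the open set $\pi(H^{(1)})\subseteq Y$, so Lemma \ref{LemField1} also applies to $C_0(H^{(1)})$ and shows that the localisation maps $C_0(H)\to C_0(H)_y$ and $C_0(H^{(1)})\to C_0(H^{(1)})_y$ are just restriction of functions, with images $C_0(H_y)$ and $C_0(H_y^{(1)})$. Restriction therefore defines a pair of maps $C_0(H)\to C_0(H_y)$ and $\mathscr{N}\to\mathscr{N}_y := \{s^1 f\mid f\in C_0(H_y^{(1)})\}$ which respects the four defining relations of Proposition \ref{PropDefRel}; by the universal property of $A$ it extends to a $^*$-homomorphism $\varphi\colon A\to C_0(H_y)\rtimes_{\Theta_y}\Z$, surjective since its image contains all generators. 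As $\varphi$ sends $h\in C_0(Y)$ to $h(y)\,1$, it annihilates the ideal $I_y$ and descends to a surjection $\overline{\varphi}\colon A_y\twoheadrightarrow C_0(H_y)\rtimes_{\Theta_y}\Z$. For injectivity, observe that $\overline{\varphi}$ intertwines the dual $S^1$-actions on $A_y$ and on $C_0(H_y)\rtimes_{\Theta_y}\Z$ (both are determined on generators), and that on the fixed point algebras it becomes, under the identification of the previous paragraph, the identity map of $C_0(H_y)$, hence is injective there. Writing $F_y\colon A_y\to (A_y)^{S^1}$ and $F^{(y)}$ for the faithful conditional expectations given by averaging over $S^1$, equivariance gives $\overline{\varphi}\circ F_y = F^{(y)}\circ\overline{\varphi}$; so if $\overline{\varphi}(a)=0$ then $\overline{\varphi}\big(F_y(a^*a)\big) = F^{(y)}\big(\overline{\varphi}(a^*a)\big) = 0$, whence $F_y(a^*a)=0$ by injectivity on the fixed point algebra, and $a=0$ by faithfulness of $F_y$. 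Thus $\overline{\varphi}$ is an isomorphism $A_y\cong C_0(H_y)\rtimes_{\Theta_y}\Z$.

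I expect the main obstacle to be the fibre identification: one has to set up the restriction homomorphism via the universal presentation of Proposition \ref{PropDefRel} and then prove injectivity, for which the decisive ingredients are the $S^1$-equivariance and the faithfulness of the averaging conditional expectation rather than any explicit computation. The $C_0(Y)$-algebra structure itself is the other point requiring care, and it is precisely there that the compatibility hypothesis $\pi\circ\theta = \pi_{\mid H^{(1)}}$ is used.
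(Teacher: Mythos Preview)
Your proof is correct and follows essentially the same approach as the paper: establish the $C_0(Y)$-algebra structure, apply Lemma~\ref{LemField3} with the dual $S^1$-action (using Lemma~\ref{LemField1} for the fixed point algebra $C_0(H)$) to obtain continuity, then identify the fibres via a surjection built from the universal presentation and prove injectivity by intertwining the faithful averaging expectations over the isomorphism $C_0(H)_y\cong C_0(H_y)$. The paper packages the injectivity step as a commuting square of conditional expectations, which is exactly your final computation, and invokes Tietze for surjectivity where you invoke Lemma~\ref{LemField1} for $H^{(1)}$; these are equivalent.
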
 
\begin{proof} By our assumptions, the natural embedding $C_0(Y)\rightarrow M(C_0(H) \rtimes_{\Theta} \Z)$ takes values in the center, so that $C_0(H)\rtimes_{\Theta} \Z$ is a $C_0(Y)$-algebra. Moreover, the dual action is clearly an action by $C_0(Y)$-automorphisms. Then $(C_0(H)\rtimes_{\Theta}\Z)^{S^1} = C_0(H)$ is a continuous field of C$^*$-algebras by Lemma \ref{LemField1}, and $C_0(H)\rtimes_{\Theta}\Z$ is a continuous field of C$^*$-algebras  by Lemma \ref{LemField3}. 

From the defining relations and Tietze's extension theorem, it is clear that we have a surjective $^*$-homomorphism \[(C_0(H)\rtimes_{\Theta} \Z)_y \rightarrow C_0(H_y)\rtimes_{\Theta_y}\Z.\] On the other hand, we also have, by Lemma \ref{LemField3}, a commuting square \[\xymatrix{ (C_0(H)\rtimes_{\Theta} \Z)_y \ar[r] \ar[d]^{\textrm{c.e.}}& C_0(H_y)\rtimes_{\Theta_y}\Z \ar[d]^{\textrm{c.e.}} \\ C_0(H)_y \ar[r]^{\cong}& C_0(H_y).}\]
From the faithfulness of the conditional expectations, it follows that the top map is also isometric,  hence a $^*$-isomorphism. 
\end{proof} 

\subsection{A field of quantum triangular matrices} \label{SecFieldQTM}

We will make use of the following notation: for $0<q<1$ and $b>0$, we define \[(0,b\rbrack_q = \{q^jb \mid j\in \N\}.\] For $0<a<b$ with $a \in q^{\N}b$, we write \[\lbrack a,b\rbrack_q = (0,b\rbrack_q\setminus (0,qa\rbrack_q = \{b,qb,q^2b,\ldots,q^{-1}a,a\}.\] We also write, for $0<a<b$, \[(0,b\rbrack_1 = (0,b\rbrack,\qquad \lbrack a,b\rbrack_1 = \lbrack a,b\rbrack.\]

\begin{Def}\label{NotSetX} We define \[H =  \left\{(q,t,a)\mid 0 < q\leq 1, t\in (0,q\rbrack_q, a\in \left\lbrack \sqrt{\frac{t}{q}},\sqrt{\frac{q}{t}}\;\right\rbrack_q\right\}.\] 
\end{Def} 

\begin{Lem} The space $H$ is locally compact (with the trace topology).
\end{Lem}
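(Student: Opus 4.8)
The plan is to realise $H$ as a closed subset of a locally compact Hausdorff space. I would take
\[X = (0,1] \times (0,\infty) \times (0,\infty)\]
with the topology inherited from $\R^3$: since $(0,1]$ is locally compact and a finite product of locally compact Hausdorff spaces is again locally compact Hausdorff, $X$ is locally compact Hausdorff, and clearly $H \subseteq X$. Because a closed subspace of a locally compact Hausdorff space is again locally compact, it then suffices to show that $H$ is closed in $X$.

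To check this I would take a sequence $(q_n,t_n,a_n)$ in $H$ converging in $X$ to some $(q_\infty,t_\infty,a_\infty)$ — so all three limit coordinates are strictly positive and $q_\infty\le 1$ — and verify that the limit again lies in $H$, distinguishing two cases. Recall that whenever $q_n<1$ one has $t_n = q_n^{k_n+1}$ for a unique $k_n\in\N$ and $a_n = q_n^{i_n-k_n/2}$ for some $i_n\in\{0,\dots,k_n\}$. If $q_\infty<1$, then $k_n+1 = \log t_n/\log q_n$ converges to the finite number $\log t_\infty/\log q_\infty$, so the integers $k_n$ — and, after passing to a subsequence, also the indices $i_n$ — are eventually constant; letting $n\to\infty$ then places $t_\infty$ in $(0,q_\infty]_{q_\infty}$ and $a_\infty$ in $[\sqrt{t_\infty/q_\infty},\sqrt{q_\infty/t_\infty}]_{q_\infty}$, so $(q_\infty,t_\infty,a_\infty)\in H$. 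If $q_\infty=1$, I would split further: when $q_n=1$ for infinitely many $n$ one passes to those indices and uses that the fibre conditions $t_n\in(0,1]$ and $\sqrt{t_n}\le a_n\le 1/\sqrt{t_n}$ are closed; when $q_n<1$ eventually, the identity $q_n^{\pm k_n/2} = t_n^{\mp k_n/(2(k_n+1))}$ together with $q_n^{k_n/2}\le a_n\le q_n^{-k_n/2}$ squeezes the limit into $\sqrt{t_\infty}\le a_\infty\le 1/\sqrt{t_\infty}$ (and forces $a_\infty=1$ when $t_\infty=1$). In all cases the limit lies in $H$, so $H$ is closed in $X$, hence locally compact.

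The one genuinely delicate point is the behaviour near $q=1$: the fibres of $H$ over $q<1$ are finite sets whose cardinality grows without bound as $t\to 0$, and they accumulate onto the two-dimensional fibre over $q=1$, so the estimates $q_n^{\pm k_n/2} = t_n^{\mp k_n/(2(k_n+1))}$ are exactly what is needed to prevent any escape to $t=0$. Away from $q=1$ the boundedness of $\log t_n/\log q_n$ makes the argument routine — the key being that $t_\infty>0$ holds in $X$, which keeps the relevant exponents from running off to infinity.
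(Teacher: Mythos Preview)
Your approach is exactly the paper's: show that $H$ is closed in a locally compact Hausdorff ambient space (the paper uses $(\R_0^+)^3$, you use $(0,1]\times(0,\infty)^2$, which amounts to the same thing since $(0,1]$ is closed in $(0,\infty)$). The paper leaves the verification of closedness entirely to the reader while you spell it out; note only that your displayed identity should read $q_n^{\pm k_n/2} = t_n^{\pm k_n/(2(k_n+1))}$ with matching signs (since $q_n = t_n^{1/(k_n+1)}$), but this typo does not affect the squeeze $\sqrt{t_n}\le a_n\le 1/\sqrt{t_n}$ that you correctly extract from it.
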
 
\begin{proof} This follows since $H$ is closed in $(\R^+_0)^3$. 
\end{proof}

We write \[H^{(1)} = \{(q,t,a)\in H\mid t+t^{-1} -qa^2-q^{-1}a^{-2} \neq 0\},\]\[H^{(-1)} = \{(q,t,a)\in H\mid t+t^{-1}-q^{-1}a^2-qa^{-2}\neq 0\}.\] We then have  a homeomorphism \[\theta: H^{(1)} \rightarrow H^{(-1)},\quad (q,t,a)\mapsto (q,t,qa),\] which moreover commutes with the open, surjective projection \[\pi: H \rightarrow (0,1\rbrack,\quad (q,t,a)\mapsto q.\]  We are hence in the situation of Section \ref{SecFieldCross}.

\begin{Def}  We define \[C_0(G)= C_0(H)\rtimes_{\Theta}\Z, \quad C_0(G_q) = C_0(H_q)\rtimes_{\Theta_q} \Z.\] 
\end{Def}

From Proposition \ref{PropFieldCross}, we obtain the following corollary.

\begin{Cor} The $C_0((0,1\rbrack)$-algebra $C_0(G)$ is a continuous field of C$^*$-algebras over $(0,1\rbrack$ with fibers $C_0(G_q)$. \end{Cor}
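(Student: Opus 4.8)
The plan is to derive this corollary directly from Proposition~\ref{PropFieldCross}, which is the substantive result; the corollary merely records the specialization to the concrete data set up in Section~\ref{SecFieldQTM}. First I would verify that the hypotheses of Section~\ref{SecFieldCross} are in force for the triple $(H,Y,\pi)$ with $Y=(0,1\rbrack$. Local compactness of $H$ has already been established, and $(0,1\rbrack$ is obviously locally compact; the map $\pi:(q,t,a)\mapsto q$ is continuous as the restriction of a coordinate projection. Openness of $\pi$ needs a brief argument: one checks that $\pi$ is open by exhibiting, for each basic open set of $H$ of the form $(U\times V\times W)\cap H$, that its image is open in $(0,1\rbrack$ — this amounts to noting that for $q$ near a given $q_0$ the fibers $H_q$ vary in such a way that every point of $H_{q_0}$ admits nearby points in $H_q$ for all $q$ close to $q_0$, which follows from the explicit description of the index sets $(0,q\rbrack_q$ and $\lbrack\sqrt{t/q},\sqrt{q/t}\rbrack_q$ and their continuous dependence on $q$ for $q<1$, together with the limiting behaviour at $q=1$. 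Alternatively, one can simply observe that $\pi$ admits local continuous sections, which immediately yields openness.

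Next I would confirm that $\Theta=(\theta,H^{(1)},H^{(-1)})$ is genuinely a partial automorphism in the sense of Section~\ref{SubSecPartAut}: the sets $H^{(1)}$ and $H^{(-1)}$ are open in $H$ since they are defined by the non-vanishing of continuous functions on $H$, and $\theta:(q,t,a)\mapsto(q,qa)$ is a homeomorphism $H^{(1)}\to H^{(-1)}$ with continuous inverse $(q,t,a)\mapsto(q,q^{-1}a)$ — one checks that $\theta$ maps $H^{(1)}$ into $H^{(-1)}$ and vice versa for the inverse, using that multiplication of the $a$-coordinate by $q$ interchanges the two defining inequalities, and that $\theta$ respects membership in $H$ itself because the index set $\lbrack\sqrt{t/q},\sqrt{q/t}\rbrack_q$ is stable under multiplication by $q$ except at its endpoint, which is precisely where the relevant factor in $H^{(1)}$ vanishes. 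Finally the compatibility $\pi\circ\theta=\pi_{\mid H^{(1)}}$ is immediate since $\theta$ leaves the first coordinate $q$ untouched. These verifications are all stated in the excerpt as already done (``We are hence in the situation of Section~\ref{SecFieldCross}''), so in the write-up they can be invoked rather than re-proved.

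With these hypotheses in place, Proposition~\ref{PropFieldCross} applies verbatim to give that $C_0(H)\rtimes_\Theta\Z$ is a continuous field of C$^*$-algebras over $(0,1\rbrack$ with fibers $C_0(H_y)\rtimes_{\Theta_y}\Z$. Unwinding the definitions $C_0(G)=C_0(H)\rtimes_\Theta\Z$ and $C_0(G_q)=C_0(H_q)\rtimes_{\Theta_q}\Z$, this is exactly the assertion of the corollary, so the proof is a one-line deduction.

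I do not expect any real obstacle here: the only point requiring a moment's thought is the openness of $\pi$ and the well-definedness of $\theta$ as a self-map of the (somewhat intricate) set $H$, but both are already asserted in the preceding discussion, and the cleanest route is to cite ``We are hence in the situation of Section~\ref{SecFieldCross}'' and then simply apply Proposition~\ref{PropFieldCross}. Accordingly the proof reads:
\begin{proof}
As noted above, the data $(H,(0,1\rbrack,\pi)$ and $\Theta=(\theta,H^{(1)},H^{(-1)})$ satisfy the hypotheses of Section~\ref{SecFieldCross}. Hence Proposition~\ref{PropFieldCross} applies and shows that $C_0(G)=C_0(H)\rtimes_{\Theta}\Z$ is a continuous field of C$^*$-algebras over $(0,1\rbrack$ with fibers $C_0(H_q)\rtimes_{\Theta_q}\Z=C_0(G_q)$.
\end{proof}
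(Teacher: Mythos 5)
Your proposal is correct and takes essentially the same route as the paper, which states the corollary as an immediate consequence of Proposition \ref{PropFieldCross} after having already noted that the data $(H,\pi,\Theta)$ satisfy the hypotheses of Section \ref{SecFieldCross}; your extra verifications of openness of $\pi$ and of $\Theta$ being a partial automorphism are fine, just left implicit in the paper. (Only a typographical slip: $\theta$ maps $(q,t,a)$ to $(q,t,qa)$, not $(q,qa)$.)
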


In the following, we give a more concrete description of these fibers. 

\begin{Lem}\label{LemIrRepq} For $0<q<1$, we have $C_0(G_q) \cong c_0$-$\oplus_{n=1}^{\infty} B(\C^n)$. 
\end{Lem}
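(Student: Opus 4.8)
The plan is to analyze the fiber space $H_q = \pi^{-1}(q)$ together with the partial homeomorphism $\theta_q$ explicitly, and then decompose the crossed product $C_0(H_q)\rtimes_{\Theta_q}\Z$ according to the orbits of $\theta_q$. First I would identify $H_q$ as a set: for fixed $0<q<1$, the parameter $t$ ranges over the sequence $(0,q\rbrack_q = \{q,q^2,q^3,\ldots\}$, say $t = q^n$ for $n\geq 1$, and for each such $t$ the parameter $a$ ranges over the finite ladder $\lbrack\sqrt{t/q},\sqrt{q/t}\rbrack_q$, which for $t=q^n$ consists of the $n$ points $\{q^{(n-1)/2},q^{(n-3)/2},\ldots,q^{-(n-1)/2}\}$ (i.e.\ $a = q^j$ with $j$ running symmetrically about $0$ in half-integer steps). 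Thus $H_q$ is a countable discrete space, a disjoint union over $n\geq 1$ of finite "rungs" $L_n$ of cardinality $n$. Since $H_q$ is discrete, $C_0(H_q) = c_0(H_q)$.

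Next I would examine $\theta_q\colon (q,t,a)\mapsto (q,t,qa)$, which preserves $t$ and shifts $a$ one step down the ladder. The obstruction sets $H^{(1)}$ and $H^{(-1)}$ are precisely designed so that $\theta_q$ is not defined at the top and bottom of each rung: the condition $t+t^{-1}-qa^2-q^{-1}a^{-2}\neq 0$ fails exactly when $qa^2 = t$ or $qa^2 = t^{-1}$, i.e.\ at the endpoints $a=\sqrt{t/q}$ (bottom) resp.\ $a = \sqrt{1/(qt)}$; one checks that on the rung $L_n$ this removes the top point from the domain of $\theta_q$ and the bottom point from its range, so that $\theta_q$ restricts to the expected "shift by one" partial bijection taking the $n-1$ upper points of $L_n$ bijectively to the $n-1$ lower points. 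Hence each rung $L_n$ is a single finite orbit of length $n$ under the partial homeomorphism $\Theta_q$, with no "wrap-around". I would verify this endpoint bookkeeping carefully, since getting the cardinalities of the matrix blocks right (that the $n$-point rung yields $B(\C^n)$, not $B(\C^{n\pm1})$) is the one place a fencepost error could creep in.

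Then I would invoke the structure of crossed products by partial automorphisms: since $H_q$ decomposes as a disjoint union of $\Theta_q$-invariant pieces $L_n$, the bimodule $\mathscr{N} = \mathscr{N}_1$ decomposes accordingly and $C_0(H_q)\rtimes_{\Theta_q}\Z = c_0\text{-}\bigoplus_{n\geq 1} \big(C_0(L_n)\rtimes_{\Theta_q|_{L_n}}\Z\big)$. For a single finite orbit of length $n$ — that is, $L_n$ a set of $n$ points with $\theta_q$ a bijection from the first $n-1$ to the last $n-1$ — the crossed product is, by the universal description in Proposition~\ref{PropDefRel}, generated by the $n$ minimal projections $p_1,\ldots,p_n$ of $C_0(L_n)$ together with partial isometries $v_i\in\mathscr{N}$ implementing $p_i\mapsto p_{i+1}$; the relations $v_i^*v_i = p_i$, $v_iv_i^* = p_{i+1}$ and the products $v_{n-1}\cdots v_i$ exhibit exactly a system of matrix units for $B(\C^n)$, so $C_0(L_n)\rtimes\Z\cong B(\C^n)$. (Equivalently, this is the standard computation that a partial automorphism crossed product over a finite chain of length $n$ with no cycle is $M_n(\C)$; one can also see it via the groupoid picture, the crossed-product groupoid being the transitive groupoid on $n$ points.) Assembling the pieces gives $C_0(G_q) = C_0(H_q)\rtimes_{\Theta_q}\Z \cong c_0\text{-}\bigoplus_{n=1}^\infty B(\C^n)$.

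The main obstacle I anticipate is purely the combinatorial verification of the second step: confirming that the defining inequations for $H^{(1)}$ and $H^{(-1)}$ truncate each rung correctly so that the orbits are genuine finite chains of the stated lengths with no periodic (circular) orbits, and that no rung is accidentally merged with another. Once that is pinned down, the crossed-product computation for a finite chain is routine and the $c_0$-direct-sum assembly is immediate from the orbit decomposition of $H_q$.
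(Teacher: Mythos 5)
Your proposal is correct in substance but takes a genuinely different route from the paper. You decompose $H_q$ into its $\Theta_q$-invariant finite rungs $L_n$, identify each restricted crossed product with $M_n(\C)$ via matrix units (equivalently, the transitive groupoid on $n$ points), and then assemble the $c_0$-direct sum. The paper instead writes down a single explicit $^*$-homomorphism $\rho: C_0(G_q)\rightarrow c_0\textrm{-}\oplus_n B(\C^n)$, sending $C_0(H_q)$ onto the diagonal subalgebra and $s^kf$ to $S^k\rho(f)$ for a block shift $S$, and proves surjectivity using Dirac functions (since $H_q$ is discrete) and injectivity by observing that $\rho$ intertwines the faithful conditional expectation $F:C_0(G_q)\rightarrow C_0(H_q)$ with the natural conditional expectation onto the diagonal. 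Both arguments rest on the same ladder combinatorics; the conditional-expectation trick lets the paper avoid both the orbit-by-orbit decomposition of the universal crossed product and the matrix-unit bookkeeping, while your version makes the block structure more transparent. If you keep your route, add a word on why the universal crossed product splits as the $c_0$-sum over the invariant clopen rungs (every representation decomposes along the central multiplier projections $\chi_{L_n}$; alternatively, over a finite chain the $^*$-algebra $\oplus_k\mathscr{N}_k$, which injects into the crossed product, is already the finite-dimensional C$^*$-algebra $M_n(\C)$, so no completion issue arises).

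There is one slip in the endpoint bookkeeping, precisely at the point you flagged. Of the two roots $a=\sqrt{t/q}$ and $a=1/\sqrt{qt}$ of $t+t^{-1}=qa^2+q^{-1}a^{-2}$, only the first lies on the rung, so $H^{(1)}$ removes the \emph{bottom} point of $L_n$ from the domain of $\theta_q$ (consistent with $\theta_q(t,a)=(t,qa)$ lowering $a$), while $H^{(-1)}$, cut out by $t+t^{-1}=q^{-1}a^2+qa^{-2}$, removes the \emph{top} point $a=\sqrt{q/t}$ from the range. Your text asserts the opposite assignment, which is internally inconsistent with your next sentence; but that next sentence --- that $\theta_q$ carries the $n-1$ upper points of $L_n$ bijectively onto the $n-1$ lower points, so each rung is a cycle-free chain of length exactly $n$ --- is the correct statement and is all the rest of your argument needs.
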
 

\begin{proof}Consider the Hilbert space $\Hsp = \oplus_{n=1}^{\infty} \C^n$, whose basis elements we denote by $e_{k}^{(n)}$ with $0\leq k < n$. Write $e_{k,l}^{(n)}$ for the standard matrix units of $c_0$-$\oplus_{n=1}^{\infty} B(\C^n) \subseteq B(\Hsp)$. Write $D \subseteq c_0$-$\oplus_{n=1}^{\infty} B(\C^n )$ for the subalgebra of diagonal matrices. Then we have an isomorphism \[\rho: C_0(H_q) \cong D,\quad f\mapsto \rho(f)= \sum_{n,k} f(q^n,q^kq^{\frac{1-n}{2}})e_{k,k}^{(n)}.\] Let \[S: \Hsp \rightarrow \Hsp,\quad e_{k}^{(n)} \mapsto e_{k+1}^{(n)},\]  where the latter vector is interpreted as zero when ill-defined. Then clearly $S^k  x \in c_0$-$\oplus_{n=1}^{\infty} B(\C^n)$ for $x\in D$, and it is easily checked by the defining relations of $C_0(G_q)$ that the map \[s^k f \mapsto S^k \rho(f)\] extends to a $^*$-homomorphism \[\rho: C_0(G_q) \rightarrow  c_0\textrm{-}\!\oplus_{n=1}^{\infty} B(\C^n).\] As $H_q$ is a discrete set, one sees by using Dirac functions that $\rho$ is a surjective map. On the other hand, $\rho$ intertwines the faithful conditional expectation $F: C_0(G_q) \rightarrow C_0(H_q)$ with the natural conditional expectation \[c_0\textrm{-}\!\oplus_{n=1}^{\infty} B(\C^n) \rightarrow D,\quad e_{k,l}^{(n)}\mapsto \delta_{k,l}e_{k,k}^{(n)}.\] It follows that $\rho$ is a $^*$-isomorphism. 
\end{proof} 

\begin{Lem}\label{LemIrRep1} We have $C_0(G_1) \cong C_0(\C\times \R_0^+)$. 
\end{Lem}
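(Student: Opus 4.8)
The plan is to identify the fiber $H_1 = \pi^{-1}(1)$ explicitly, compute the partial automorphism $\Theta_1$ on it, and then recognize the crossed product $C_0(H_1)\rtimes_{\Theta_1}\Z$ as $C_0(\C\times\R_0^+)$ by exhibiting a homeomorphism of the relevant groupoid/transformation-space picture. First I would unwind Definition \ref{NotSetX} at $q=1$: there $(0,1\rbrack_1 = (0,1\rbrack$ and $\lbrack\sqrt{t},\sqrt{1/t}\rbrack_1 = \lbrack\sqrt{t},\sqrt{1/t}\rbrack$, so $H_1 = \{(t,a)\mid 0<t\leq 1,\ \sqrt{t}\leq a\leq 1/\sqrt{t}\}$, which (forgetting the redundant first coordinate $q=1$) is a closed region in $(\R_0^+)^2$ homeomorphic to a closed half-strip. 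The map $\theta_1$ is $(t,a)\mapsto(t,a)$ since $qa = a$ when $q=1$ — but crucially its domain $H_1^{(1)}$ and codomain $H_1^{(-1)}$ are the open subsets where $t+t^{-1}-a^2-a^{-2}\neq 0$, i.e. where $a\neq 1$ (using that $t+t^{-1} = a^2 + a^{-2}$ with $t,a\le 1$... one must be slightly careful: $a$ ranges on both sides of $1$, and $t + t^{-1} - a^2 - a^{-2} = 0$ iff $a^2 \in \{t, t^{-1}\}$, i.e. $a = \sqrt t$ or $a = 1/\sqrt t$, the two boundary curves). So $\theta_1$ is the identity on the interior, and $H_1^{(1)} = H_1^{(-1)}$ is $H_1$ minus the two boundary arcs $a=\sqrt t$ and $a = 1/\sqrt t$.

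Next I would translate this into the crossed product. Since $\theta_1$ is the identity on its domain $U := H_1^{(1)} = H_1^{(-1)}$, the partial automorphism is the pair (identity map of $U$, viewed as a subset of $H_1$). For such a "partial identity" the crossed product $C_0(H_1)\rtimes_{\Theta_1}\Z$ should be describable directly from Proposition \ref{PropDefRel}: the generating Hilbert bimodule $\mathscr{N} = \mathscr{N}_1$ consists of formal symbols $sf$ with $f\in C_0(U)$, with $\langle sf, sg\rangle_r = \langle sf,sg\rangle_l = \bar f g$, so the "generator" $s$ acts like an isometry whose source and range projections both equal the (open) support projection of $U$ inside $M(C_0(H_1))$. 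I would argue that $C_0(H_1)\rtimes_{\Theta_1}\Z \cong C_0(H_1 \setminus U)\,\oplus\, \bigl(C_0(U)\rtimes_{\id}\Z\bigr)$, the first summand coming from the boundary arcs where $\mathscr{N}$ contributes nothing, and the second being an honest crossed product by a trivial $\Z$-action, hence $C_0(U\times S^1)$ by Fourier transform / Pontryagin duality on $\Z$. Concretely, a cleaner route: view everything as sections over the base, use Lemma \ref{LemField1}-style reasoning fiberwise over $H_1$, and note that at points of $U$ the fiber is $C^*(\Z) = C(S^1)$ while at boundary points the fiber is $\C$. Then $C_0(G_1)$ is the algebra of $C_0$-sections of a field that is $C(S^1)$ over the open set $U$ and $\C$ over the complementary arcs $\partial := H_1\setminus U$, glued so the section extends continuously — i.e. the spectrum is the space obtained from $U\times S^1 \sqcup \partial$ by the obvious collapsing at the boundary.

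Finally I would show this spectrum is homeomorphic to $\C\times\R_0^+$. The idea: $H_1$ is a closed half-strip with two boundary arcs meeting as $t\to 1$; $U$ is its interior-plus-the-corner-vertex-region; forming $U\times S^1$ and collapsing $S^1$ over the two boundary arcs produces a solid region whose cross-section perpendicular to one "radial" $\R_0^+$-direction is a disk (an annulus $\{a : \sqrt t < a < 1/\sqrt t\}$ times $S^1$, with the two boundary circles each crushed to a point, i.e. two cones glued along their common boundary circle = a 2-sphere; hmm — so I must instead parametrize so the collapse gives $\C$, a plane, not $S^2$). The correct bookkeeping: only the arc $a=\sqrt t$ (say) is a genuine boundary while the other end $a = 1/\sqrt t$ corresponds to the "large" direction that stays open, so a cross-section is a half-open annulus with only the inner circle collapsed = an open disk $\cong \C$, and the remaining free parameter runs over $\R_0^+$; this yields $\C\times\R_0^+$. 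I expect the main obstacle to be exactly this last point: getting the combinatorics of which boundary arcs of $H_1$ carry the "collapsed-$S^1$" fiber right, so that the glued spectrum is $\C\times\R_0^+$ and not, say, $S^2\times\R_0^+$ or a closed half-space. Once the spectrum is correctly identified as a locally compact Hausdorff space homeomorphic to $\C\times\R_0^+$, the isomorphism of C$^*$-algebras is automatic since both sides are commutative (the crossed product by a trivial $\Z$-action on a commutative algebra is commutative). A final consistency check: $C_0(\C\times\R_0^+)$ is indeed commutative, matching the expectation that $G_1$ is an honest (non-quantum) group — the $az+b$-type upper triangular group — whose underlying space is $\C\times\R_0^+$.
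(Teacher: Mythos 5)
There is a genuine gap, and it sits exactly where you predicted it would: in deciding which parts of $H_1$ carry a collapsed circle. Your identification of $H_1$, of $H_1^{(k)}=H_1'$ for all $k\neq 0$ with $\theta_1=\mathrm{id}$, and your description of the Gelfand spectrum of the commutative algebra $C_0(G_1)$ as the quotient of $H_1\times S^1$ in which the circle over each point of $\partial:=H_1\setminus H_1'$ is crushed to a point, are correct. (The intermediate claim $C_0(G_1)\cong C_0(\partial)\oplus\bigl(C_0(H_1')\rtimes_{\mathrm{id}}\Z\bigr)$ is false, though: $C_0(H_1'\times S^1)$ sits inside $C_0(G_1)$ as an ideal with quotient $C_0(\partial)$, but the extension is not a direct sum, since every point of $\partial$ is a limit of points of $H_1'\times S^1$ in the spectrum; your subsequent "field of fibers" description supersedes this.) The error is in the final "correct bookkeeping": both arcs $a=\sqrt t$ and $a=1/\sqrt t$ belong to $H_1$ (the $a$-interval is closed) and both lie in the zero set of $t+t^{-1}-a^2-a^{-2}$, so the $S^1$-fiber is collapsed over both of them; there is no sense in which the arc $a=1/\sqrt t$ "stays open", and the space you build from that premise is not the spectrum. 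Your first instinct was in fact right and needed no fixing: the fixed-$t$ slices of the spectrum are $2$-spheres (for $t<1$) shrinking to a point over $(1,1)$, and this is perfectly compatible with the answer, because $\C\times\R_0^+$ is homeomorphic to $\R^3$ via $(n,a)\mapsto(n,a-a^{-1})$, under which these slices become the concentric spheres $|n|^2+a^2+a^{-2}=c$, $c>2$, around the point $(0,1)$.

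The clean repair is the change of variables the paper uses: the map $H_1\times S^1\rightarrow\C\times\R_0^+$, $(t,a,z)\mapsto\bigl((t+t^{-1}-a^2-a^{-2})^{1/2}z,\,a\bigr)$, is a continuous proper surjection whose nontrivial fibers are exactly the circles over $\partial$ (note that $\partial$ is a single curve through the pinch point $(1,1)$, parametrized by $a\in\R_0^+$, and it maps onto $\{0\}\times\R_0^+$), so it descends to a homeomorphism of your glued spectrum onto $\C\times\R_0^+$. Equivalently: slice at fixed $a$ rather than fixed $t$; each such slice meets only one arc, and in the variable $|N|\in\lbrack 0,\infty)$ it is a half-line times $S^1$ with the circle at $0$ collapsed, i.e.\ a copy of $\C$. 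Note also that the paper avoids computing the spectrum altogether: it writes down the $^*$-homomorphism $s^kf\mapsto r^k\rho(f)$ into $C_0(\C\times\R_0^+)$ directly, obtaining surjectivity from Stone--Weierstrass and injectivity from $S^1$-equivariance plus isometry on the fixed point algebra. Your Gelfand-spectrum route is viable and a bit more conceptual, but it needs the corrected identification above (and a short argument that the topology on the character space is the quotient topology you describe).
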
 
\begin{proof} Consider on $\C\times \R_0^+$ the circle action \[\alpha_z((n,a)) = (zn,a).\] Then the map \[\rho: H_1 \rightarrow (\C\times \R_0^+)/S^1,\quad (t,a)\mapsto ((t+t^{-1}-a^2-a^{-2})^{1/2},a)S^1\] is a homeomorphism. Note further that in this case, $H_1^{(k)}$ is the same set $H_1'$ for all $k\neq 0$, and $\theta$ the identity map on $H_1'$. The map $\rho$ restricts to a homeomorphism \[H_1' \rightarrow (\C_0\times \R_0^+)/S^1 \cong \R_0^+\times \R_0^+.\] We can then consider $C_0(H_1) \subseteq C_0(\C\times \R_0^+)$ as $S^1$-invariant functions, and similarly $C_0(H_1') \subseteq C_0(\C_0\times \R_0^+) \subseteq C_0(\C\times \R_0^+)$. 

Consider on $\C\times \R_0^+$ the measurable functions \[r^{\pm}:\left\{\begin{array}{lllll} (n,a)&\mapsto& (n/|n|)^{\pm 1} &\textrm{for}&n\neq 0,\\ (n,a)&\mapsto& 0& \textrm{for}& n=0.\end{array}\right.\] Write $r^{0}$ for the identity function on $\C\times \R_0^+$. Then $r^nf \in C_0(\C\times \R_0^+)$ for $f\in C_0(H_1')$, and we hence obtain a $^*$-homomorphism \[\rho: C_0(G_1) \rightarrow C_0(\C\times \R_0^+),\quad s^kf \mapsto r^k\rho(f).\] By Stone-Weierstrass, this map is surjective. As this map intertwines the $S^1$-actions and is isometric on the fixed point algebra, it is also injective, hence an isomorphism. 
\end{proof}

\subsection{A field of faithful representations}

We keep the notation from the previous section. According to Theorem \ref{TheoBlanFF}, $C_0(G)$ must admit a field of faithful representations. In this section, we describe a concrete instance of such a field. 

\begin{Def} We define $I = H \times S^1$, and $I_q = H_q\times S^1$.
\end{Def} 

Consider on $I_1$ the measure\[\mu_1=\frac{1}{2\pi}(t^{-2}-1)a\rd t\rd a\rd \theta\] and associated functional \[\varphiH_1(f) = \frac{1}{2\pi}\int_0^{2\pi} \int_0^{1}\int_{\sqrt{t}}^{\frac{1}{\sqrt{t}}} f(t,a,e^{i\theta})(t^{-2}-1)a\rd a\rd t\rd \theta,\qquad f\in C_c(I_1),\] where $C_c(I_1)$ denotes the space of continuous functions on $I_1$ with compact support.

Let us further endow $C_c(I_q)$ with the positive functional \[\varphiH_q(f) = \frac{ (1-q)^2}{2\pi}\sum_{(t,a)\in H_q} (t^{-1}-t)a^{2}\int_{0}^{2\pi} f(t,a,e^{i\theta}) \rd \theta.\]

\begin{Lem}\label{LemContFieldHilb} For $f\in C_c(I)$ and $0< q\leq 1$, write $f_q \in C_c(I_q)$ for \[f_q(t,a,z) = f(q,t,a,z).\]Then the map \[\varphiH: C_c(I) \rightarrow C_c((0,1\rbrack),\quad f\mapsto \left(q\mapsto \varphiH_q(f_q)\right)\] is well defined, faithful and positive. 
\end{Lem}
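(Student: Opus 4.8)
The plan is to prove the three assertions — well-definedness, positivity, and faithfulness — more or less independently, each by reducing to a pointwise-in-$q$ statement and then checking continuity in $q$ separately.

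\textbf{Well-definedness and continuity in $q$.} First I would observe that for fixed $q$ the formulas for $\varphiH_q$ obviously make sense: for $0<q<1$ the set $H_q$ is discrete and, since $f$ has compact support, $f_q$ is supported on finitely many $(t,a)$, so the sum is finite; for $q=1$ the integral of a compactly supported continuous function against the smooth density $(t^{-2}-1)a$ on the bounded region $\sqrt t\le a\le 1/\sqrt t$, $0<t\le 1$ is plainly finite. The real content is that $q\mapsto \varphiH_q(f_q)$ is continuous on $(0,1\rbrack$, in particular continuous at $q=1$ from the left. For $q$ in a small interval $(1-\varepsilon,1\rbrack$ this should follow from recognising $\varphiH_q$ for $q<1$ as a Riemann-type sum: the points of $H_q$ of the form $(t,a)=(q^n, q^k q^{(1-n)/2})$ lie on the lattice in $(t,a)$-space with log-spacings $\log q$, the weight $(1-q)^2(t^{-1}-t)a^2$ degenerates exactly to the density $(t^{-2}-1)a\,\rd t\,\rd a$ after the substitutions $\rd t \approx -t\log q$, $\rd a\approx -a\log q$ and $(1-q)^2/(\log q)^2 \to 1$, so the finite sums converge to the integral $\varphiH_1(f_1)$. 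Continuity at points $q_0\in(0,1)$ is easier since near such a point $H$ is locally a disjoint union of graphs of continuous functions of $q$ and only finitely many of them meet the support of $f$; one then just uses continuity of $f$ and of the weights. I expect this Riemann-sum comparison at $q=1$ to be the main obstacle: one has to be careful that the number of lattice points contributing grows like $1/(\log q)^2$ while each weight shrinks like $(\log q)^2$, and that the approximation is uniform on the compact support of $f$.

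\textbf{Positivity.} This is immediate once well-definedness is in place: for $0<q<1$ each summand $(1-q)^2(t^{-1}-t)a^2\,\frac{1}{2\pi}\int_0^{2\pi}f(q,t,a,e^{i\theta})\rd\theta$ is nonnegative when $f\geq 0$, because $t\in(0,q\rbrack_q$ forces $t\le q<1$ so $t^{-1}-t>0$, and $a^2>0$; hence $\varphiH_q(f_q)\ge 0$. For $q=1$ the density $(t^{-2}-1)a$ is nonnegative on the region $0<t\le 1$, so $\varphiH_1(f_1)\ge 0$ as well. Therefore $\varphiH(f)$ is a nonnegative function in $C_c((0,1\rbrack)$ whenever $f\ge 0$, i.e.\ $\varphiH$ is positive.

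\textbf{Faithfulness.} Here I would argue that if $f\in C_c(I)$, $f\ge 0$, and $\varphiH(f)=0$, then $f=0$. Since $\varphiH(f)(q)=\varphiH_q(f_q)=0$ for every $q$, and each $\varphiH_q$ is a sum (resp.\ integral) of nonnegative terms with strictly positive weights over \emph{all} of $H_q\times S^1$, vanishing of $\varphiH_q(f_q)$ forces $\int_0^{2\pi} f(q,t,a,e^{i\theta})\rd\theta=0$, hence $f(q,t,a,z)=0$, for every $(t,a)\in H_q$ and every $z\in S^1$ — using that $f\ge 0$ and continuous so its angular integral vanishes iff $f$ vanishes there. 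Letting $q$ range over $(0,1\rbrack$ this says $f\equiv 0$ on all of $I=H\times S^1$. Thus $\varphiH$ is faithful. The only subtlety is to note that $\bigcup_{0<q\le1}(H_q\times S^1)=I$ by construction, so no point of $I$ is missed; this is clear from Definition~\ref{NotSetX} and the definition $I=H\times S^1$.
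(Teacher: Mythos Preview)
Your approach is correct and conceptually the same as the paper's: the content is continuity of $q\mapsto\varphiH_q(f_q)$, in particular at $q=1$, via recognition of $\varphiH_q$ as a Riemann-type sum for the integral $\varphiH_1$; positivity and faithfulness are then immediate from strict positivity of the weights. The paper, however, executes the continuity step more cleanly. Rather than arguing directly in $(t,a)$-coordinates, it first averages over $S^1$ to reduce to $C_c(H)$, and then performs the substitution $t=qxy$, $a=\sqrt{y/x}$ (equivalently $x=q^{n-k}$, $y=q^k$ in the double sum), which rewrites $\varphiH_q(f_q)$ as a double Jackson integral $\int_0^1\!\int_0^1 f(q,qxy,\sqrt{y/x})(q^{-1}x^{-1}y^{-1}-qxy)x^{-2}\,\rd_qx\,\rd_qy$ over the fixed square $(0,1)^2$. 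Continuity in $q$ on $(0,1)$ and the limit $q\to1$ are then automatic properties of Jackson integrals of continuous compactly supported integrands, and identifying the limit with $\varphiH_1$ is a single Jacobian computation for the diffeomorphism $(x,y)\mapsto(xy,\sqrt{y/x})$. This sidesteps exactly the awkwardness you anticipate --- the $(t,a)$-lattice in $H_q$ is not a product lattice, its $a$-fibre shifting with $n$ --- by passing to coordinates in which it \emph{is} the standard product $q$-lattice on $(0,1)^2$. Your direct Riemann-sum argument would work but needs the uniform-approximation bookkeeping you flag; the paper's change of variables buys that for free.
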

\begin{proof} 
Let $f\in C_c(I)$. Define \[E: C_c(I) \rightarrow C_c(H), \quad E(f)(q,t,a) = \frac{1}{2\pi} \int_0^{2\pi} f(q,t,a,e^{i\theta})\rd\theta.\] Then $\varphiH$ satisfies \[\varphiH = \varphiH\circ E.\] It is hence sufficient to prove that the restriction of $\varphiH$ to $C_c(H)$ has range in $C_c((0,1\rbrack)$. 

But for $f\in C_c(H)$ and $0< q<1$,

\[
 \varphiH(f)(q) = \varphiH_q(f_q)= (1-q)^2\sum_{(t,a)\in H_q} f(q,t,a) (t^{-1}-t)a^{2}.\]
 
 Define $\int_0^1 g(x)\rd_qx= (1-q)\sum_{n=0}^{\infty} g(q^n)q^n$. Then we can write 
\begin{align*} \varphiH(f)(q) &= (1-q)^2
\sum_{n=0}^{\infty}\sum_{k=-\frac{n}{2}}^{\frac{n}{2}} f(q,q^{n+1},q^k) (q^{-n-1}-q^{n+1})q^{2k} \\ 
&= (1-q)^2\sum_{n=0}^{\infty}\sum_{k=0}^{n} f(q,q^{n+1},q^{k-\frac{n}{2}}) (q^{-n-1}-q^{n+1})q^{2k-n}\\ 
&= (1-q)^2\sum_{k=0}^{\infty} \sum_{n=k}^{\infty}   f(q,q^{n+1},q^{k-\frac{n}{2}}) (q^{-n-1}-q^{n+1})q^{2k-n} \\ 
&= (1-q)^2\sum_{k=0}^{\infty} \sum_{n=0}^{\infty}   f(q,q^{n+k+1},q^{\frac{k-n}{2}}) (q^{-n-k-1}-q^{n+k+1})q^{k-n} \\ 
&= (1-q)^2\sum_{k=0}^{\infty} \sum_{n=0}^{\infty}   f(q,q^{n+k+1},q^{\frac{k-n}{2}}) (q^{-n-k-1}-q^{n+k+1})q^{-2n}q^kq^n \\ 
&= \int_0^1\int_0^1 f(q,qxy,\sqrt{y/x}) (q^{-1}x^{-1}y^{-1}-qxy)x^{-2} \rd_qx\rd_qy.\end{align*} 

We easily see that $\psi(f)$ is continuous on $(0,1)$, with moreover \[\varphiH(f)(q) \underset{{q\rightarrow 1}}{\rightarrow} \varphiH'_1(f_1)\] where \[\varphiH'_1(g) = \int_0^1\int_0^1 g(xy,\sqrt{y/x}) (x^{-1}y^{-1}-xy)x^{-2} \rd x\rd y,\quad g\in C_c(H_1).\] 
However, with $H_1'=\{(t,a)\in H_1\mid \sqrt{t}<a<\frac{1}{\sqrt{t}}\}$, we have a diffeomorphism \[\Phi: (0,1)^2\rightarrow H_1',\quad (x,y)\mapsto (xy,\sqrt{y/x})\] with $|J_{\Phi}(t,a)| = a$. Hence \[\varphiH'_1(g) =  \int_0^1 \int_{\sqrt{t}}^{\frac{1}{\sqrt{t}}} g(t,a)(t^{-2}-1)a\rd a \rd t = \varphiH_1(g).\]This proves that $\varphiH$ has range in $C_c((0,1\rbrack)$. 

Of course, the positivity of $\varphiH$ is immediate, as is the faithfulness, since each $\varphiH_q$ is faithful. \end{proof}

\begin{Def} We define $\mathscr{I}$ to be the Hilbert $C_0((0,1\rbrack)$-module obtained by completing $C_c(I)$ with respect to the $C_0((0,1\rbrack)$-valued inner product \[\langle f,g\rangle = \varphiH(f^*g).\] 
\end{Def}

\begin{Lem}\label{LemFibHilb} For all $0<q\leq 1$, we have a natural identification \[\mathscr{I}_q = L^2(I_q,\varphiH_q).\] Moreover, under this identification, \[C_c(I)_q = C_c(I_q).\]
\end{Lem}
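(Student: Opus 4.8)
The plan is to unwind the definitions of the Hilbert $C_0((0,1\rbrack)$-module $\mathscr{I}$ and of the fiber functor $\mathscr{F}\mapsto\mathscr{F}_y$ from Section~1.1, and check that the fiber seminorm on $C_c(I)$ at $q$ is exactly the $L^2$-seminorm coming from $\varphiH_q$. Recall that $\mathscr{I}_q$ is by definition the separation-completion of $\mathscr{I}$ (equivalently of the dense submodule $C_c(I)$) with respect to $\|f\|_q = \sqrt{\langle f,f\rangle(q)}$, and by the definition of $\mathscr{I}$ we have $\langle f,f\rangle(q) = \varphiH(f^*f)(q) = \varphiH_q(f_q^*f_q) = \varphiH_q(|f_q|^2)$. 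So $\|f\|_q$ is precisely the $L^2(I_q,\varphiH_q)$-norm of the restriction $f_q\in C_c(I_q)$.

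**Next I would** handle the two cases separately, corresponding to Lemma~\ref{LemContFieldHilb}'s two regimes. For $0<q<1$: $I_q = H_q\times S^1$ with $H_q$ a finite or countable discrete set, so $C_c(I_q)$ consists of continuous functions on a disjoint union of circles with only finitely many nonzero "components", and $\varphiH_q$ is a genuinely faithful positive measure (each weight $(1-q)^2(t^{-1}-t)a^2/(2\pi)>0$ together with normalized Lebesgue measure on each circle). It is standard that $C_c(I_q)$ is dense in $L^2(I_q,\varphiH_q)$ — this is just $c_c$-$\oplus$ of the spaces $L^2(S^1)$, where density is the classical Fejér/Stone--Weierstrass fact. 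For $q=1$: $I_1$ is a manifold with a smooth positive density $\mu_1 = \frac{1}{2\pi}(t^{-2}-1)a\,\rd t\rd a\rd\theta$ (strictly positive on the open part; note the weight degenerates only on the boundary set where $a=\sqrt t$ or $a=1/\sqrt t$, which is $\mu_1$-null), so again $C_c(I_1)$ is dense in $L^2(I_1,\mu_1)$ by the usual Stone--Weierstrass argument on a locally compact space. In both cases the seminorm $\|\cdot\|_q$ has trivial kernel on $C_c(I_q)$ (since $\varphiH_q$ is faithful, matching the faithfulness asserted in Lemma~\ref{LemContFieldHilb}), so the separation step is vacuous there and the completion of $(C_c(I_q),\|\cdot\|_q)$ is exactly $L^2(I_q,\varphiH_q)$.

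**The remaining point**, and the one requiring a little care, is that the restriction map $f\mapsto f_q$ actually identifies $C_c(I)_q$, the image of $C_c(I)$ inside $\mathscr{I}_q$, with all of $C_c(I_q)$; this gives the second assertion and also shows the first identification is onto a dense subspace of the right space, hence the completions agree. Surjectivity of $f\mapsto f_q$ from $C_c(I)$ onto $C_c(I_q)$ is a Tietze-type extension statement: since $\pi\colon I\to(0,1\rbrack$ is continuous, open and surjective with $\pi^{-1}(q)=I_q$, and $I_q$ is closed in $I$, any $g\in C_c(I_q)$ extends to a compactly supported continuous function on $I$ restricting to $g$ on $I_q$ (multiply a Tietze extension by a compactly supported cutoff that is $1$ near $\operatorname{supp} g$ and use local compactness). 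Injectivity on the quotient is immediate: $f_q$ determines $\|f - f'\|_q$, i.e. the class of $f$ in $\mathscr{I}_q$ depends only on $f_q$, and conversely if $f_q = 0$ then $\|f\|_q = \sqrt{\varphiH_q(0)} = 0$. The one genuine subtlety — the only place I expect to spend real effort — is confirming in the $q=1$ case that the boundary locus where the density $(t^{-2}-1)a$ vanishes or where $H_1$ meets its closure is $\mu_1$-null and does not obstruct density of $C_c(I_1)$ in $L^2$; once that measure-theoretic check is in place, everything else is bookkeeping with the definitions already set up in Section~1.1.
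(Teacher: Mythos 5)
Your argument is correct and is essentially the paper's proof: the fiber seminorm at $q$ is by construction the $L^2(I_q,\psi_q)$-norm of the restriction, so $C_c(I)\ni f\mapsto f_q$ induces an isometry $C_c(I)_q\rightarrow C_c(I_q)$, which is surjective by Tietze extension, and the identification of completions follows from density of $C_c(I_q)$ in $L^2(I_q,\psi_q)$. The only remark is that the ``subtlety'' you flag at $q=1$ is not one (density of $C_c$ in $L^2$ of a Radon measure on a second countable locally compact space holds regardless of where the density vanishes; incidentally, the weight $(t^{-2}-1)a$ degenerates where $t=1$, not on $a=\sqrt{t}$ or $a=1/\sqrt{t}$), so your extra measure-theoretic check is unnecessary.
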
 
\begin{proof} By definition, we obtain an isometric map $C_c(I)_q \rightarrow C_c(I_q)$ sending $f_q$ to the restriction of $f$ to $I_q$. By Tietze's extension theorem, it is surjective. The lemma follows. 
\end{proof} 

The natural projection $I \rightarrow H$ leads to the natural embedding \[C_0(H)\subseteq C_0(I)\] as $S^1$-independent functions. We further denote \[I^{(k)} = H^{(k)}\times S^1 \subseteq I.\] Then $\Theta$ extends to a partial homeomorphism of $I$ by \[\theta: I^{(1)}\rightarrow I^{(-1)}, \quad (q,t,a,z) \mapsto (q,t,qa,z).\]Denote by $Z$ the function \[Z: I \rightarrow \C,\quad (q,t,a,z)\mapsto z.\]

\begin{Lem}\label{LemFaithG} There is a field of faithful representations  \[\pi\in \Mor(C_0(G), \mathcal{K}(\mathscr{I}))\] such that \[\pi(f)h= fh,\qquad \pi(sg)h =Z \theta^{-1}(gh)\] for $f\in C_0(H), g\in C_0(H^{(1)})$ and $h\in C_c(I)$. 

\end{Lem}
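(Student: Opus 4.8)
The strategy is to \emph{build} $\pi$ rather than merely invoke Theorem~\ref{TheoBlanFF} abstractly: we define an explicit $C_0((0,1\rbrack)$-linear $^*$-homomorphism $C_0(G)\to\mathcal K(\mathscr I)$ via the stated formulas, check the defining relations of Proposition~\ref{PropDefRel}, and then verify fiberwise faithfulness using the concrete fibers from Lemmas~\ref{LemIrRepq}, \ref{LemIrRep1} and the fiber Hilbert spaces from Lemma~\ref{LemFibHilb}. First I would check that the two prescriptions make sense: for $f\in C_0(H)$, multiplication by $f$ is adjointable on $\mathscr I$ with adjoint multiplication by $f^*$, and is $C_0((0,1\rbrack)$-linear; for $g\in C_0(H^{(1)})$, the operator $h\mapsto Z\,\theta^{-1}(gh)$ is well defined on $C_c(I)$ since $gh$ is supported in $I^{(1)}$ where $\theta^{-1}$ is defined, and $|Z|=1$ makes it bounded. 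The key computation here is that this operator is adjointable with adjoint $h\mapsto \theta(\bar Z\,g^*h)=\theta(g^* \bar Z h)$, and more importantly that it is \emph{isometric up to the inner product weight}: the change of variables $a\mapsto qa$ implementing $\theta$ must be compensated by the weight $\varphiH_q$. Concretely, in the fiber over $q<1$ the weight has density $(t^{-1}-t)a^2$, and shifting $a=q^{k-n/2}\mapsto q^{k+1-n/2}$ multiplies $a^2$ by $q^2$; one checks this is exactly the norm ratio $\|e^{(n)}_{k+1}\|^2/\|e^{(n)}_k\|^2$ under the identification $\mathscr I_q\cong L^2(I_q,\varphiH_q)$, so that $\pi(sg)$ corresponds to $S\rho(g)$ (suitably weighted) in the notation of Lemma~\ref{LemIrRepq}. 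This weight-matching is the main obstacle: one must verify that $\varphiH_q$ was chosen precisely so that $\pi(s\mathbbm 1)$ is a \emph{partial isometry} (on the appropriate reducing subspaces) rather than merely bounded, equivalently that $\langle sg\cdot h, sg\cdot h\rangle = \langle (s g)^*(sg) h, h\rangle = \langle \theta^{-1}(g^*g)h,h\rangle$ holds in $C_0((0,1\rbrack)$.

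Granting that, the defining relations of Proposition~\ref{PropDefRel} follow by direct computation: the bimodule relations $x\cdot f = xf$, $f\cdot x = fx$ for $x\in\mathscr N$ translate into $Z\theta^{-1}(g\cdot (fh)) = (Zf)\theta^{-1}(gh)$ type identities using that $\theta^{-1}(g \cdot \theta(f') h) = \theta^{-1}(g)f'\theta^{-1}(h)$ and that $Z$ is $\theta$-invariant; and the inner-product relations $x^*\cdot y = \langle x,y\rangle_r$, $x\cdot y^* = \langle x,y\rangle_l$ reduce exactly to the adjointness and isometry computations above together with the module structure on $\mathscr N = \mathscr N_1$ recalled before Proposition~\ref{PropDefRel}. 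So by universality we get a well-defined $\pi\in\Mor(C_0(G),\mathcal K(\mathscr I))$; non-degeneracy follows since $C_0(H)\subseteq C_0(G)$ already acts non-degenerately on $\mathscr I$ (it acts by multiplication, and $C_c(I)$ is dense), and $C_0((0,1\rbrack)$-linearity is built into the formulas. Hence $\pi$ factors into fiber representations $\pi_q$ of $C_0(G_q)$ on $\mathscr I_q = L^2(I_q,\varphiH_q)$.

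It then remains to show each $\pi_q$ is faithful. For $q<1$: under $C_0(G_q)\cong c_0\text{-}\oplus B(\C^n)$ (Lemma~\ref{LemIrRepq}), $\pi_q$ restricted to $D = C_0(H_q)$ is the multiplication representation on the $\ell^2$-space of $H_q$, which is faithful on the diagonal; and $\pi_q(s\mathbbm 1_{H_q^{(1)}})$ acts, up to scalars, as the shift $S$, so $\pi_q$ realizes each matrix unit $e^{(n)}_{k,l}$ as a nonzero rank-one operator. Since $c_0\text{-}\oplus_n B(\C^n)$ has no nontrivial closed ideals that annihilate a full set of matrix units on every block, faithfulness follows (concretely: $\ker\pi_q$ is a closed ideal, hence a sub-direct-sum of the $B(\C^n)$, and it meets each block trivially because the diagonal acts faithfully). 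For $q=1$: under $C_0(G_1)\cong C_0(\C\times\R_0^+)$ (Lemma~\ref{LemIrRep1}), $\pi_1$ is unitarily equivalent to the multiplication representation of $C_0(\C\times\R_0^+)$ on $L^2(\C\times\R_0^+)$ for a measure of full support — here one uses that $r^0 = Z$-type functions separate the $S^1$-orbits and the weight $\mu_1$ has full support, so the only function acting as $0$ is $0$; concretely one transports $\varphiH_1$ through the diffeomorphism $\Phi$ and the homeomorphism $\rho$ of Lemma~\ref{LemIrRep1} to see $\mathscr I_1 \cong L^2(\C\times\R_0^+,\text{full-support measure})$ with $\pi_1$ the multiplication representation. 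In both cases faithfulness of $\pi_q$ holds, so $\pi$ is a field of faithful representations, which is what we wanted. I expect the routine-but-delicate part to be bookkeeping the half-integer exponents $q^{k-n/2}$ and the ranges of $k$ in matching $\pi_q$ to the shift $S$, and the genuinely substantive point to be the weight compatibility that makes $\pi(sg)$ well-defined and adjointable on the Hilbert module $\mathscr I$ (not just on each fiber).
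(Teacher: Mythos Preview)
Your construction of $\pi$ follows the paper exactly: establish boundedness of $\pi(sg)$ via the weight relation $\psi_q(\theta^{-1}(f)) = q^2\psi_q(f)$, check the relations of Proposition~\ref{PropDefRel}, and invoke universality. (One computational slip: you write $(sg)^*(sg)=\theta^{-1}(g^*g)$, but the product rules give $(sg)^*(sg)=g^*g$; it is $(sg)(sg)^*$ that equals $\theta^{-1}(gg^*)$. This affects which identity you actually need to verify for the weight compatibility.)

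For faithfulness you take a genuinely different route from the paper. You split into cases and use the explicit fiber identifications: for $q<1$ you argue via the ideal lattice of $c_0$-$\oplus_n B(\C^n)$ that no simple block can lie in $\ker\pi_q$ because the diagonal $C_0(H_q)$ already acts faithfully by multiplication; for $q=1$ you transport through Lemma~\ref{LemIrRep1} to realise $\pi_1$ as a multiplication representation on a full-support $L^2$-space. Both cases go through.

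The paper instead gives a single uniform argument valid for all $q$ simultaneously: each $L^2(I_q)$ carries a unitary $S^1$-action $U_z$ (translation in the circle variable) implementing the dual action, $\pi_q(\beta_z(x))=U_z\pi_q(x)U_z^*$. Since $\pi_q$ is faithful on the fixed-point algebra $C_0(H_q)$ and the conditional expectation $F:C_0(G_q)\to C_0(H_q)$ is faithful, any $x$ with $\pi_q(x)=0$ satisfies $\pi_q(F(x^*x))=0$, hence $F(x^*x)=0$, hence $x=0$. This avoids the case split and the half-integer bookkeeping you anticipate as delicate; your approach trades that economy for an explicit link to the irreducible decomposition of each fiber.
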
 

\begin{proof} It is clear that $\pi$ is a well-defined representation of $C_0(H)$. Furthermore, one easily checks that \[\psi_q(\theta^{-1}(f)) = q^2\psi_q(f),\quad f\in C_c(I)\cap C_0(I^{(1)}),\] so that $\pi(sg)$ extends to a bounded operator on $\mathscr{I}$. An easy computation shows that the defining relations in Proposition \ref{PropDefRel} are satisfied for the $\pi(f)$ and $\pi(sg)$, so that there exists a representation $\pi\in \Mor( C_0(G),\mathcal{K}(\mathscr{I}))$ as above. 

To see that it is a field of faithful representations, note that the localisations $\pi_q$ are faithful on the $C_0(H_q)$. But each $L^2(I_q)$ also carries a continuous representation of $S^1$ by \[\alpha_z(f)(t,a,w) =f(t,a,zw),\]  and
\[\pi_q(\beta_z(a_q)) = U_z\pi_q(a_q)U_z^*,\quad z\in S^1,a_q\in C_0(G_q).\] It follows that each $\pi_q$ is faithful. 
\end{proof}

\section{A field of locally compact quantum groups}

\subsection{Affiliated operators}

Let $A$ be a C$^*$-algebra, and $\mathscr{F}$ a right Hilbert $A$-module. Recall that an \emph{unbounded operator} on $\mathscr{F}$ is an $A$-linear operator \[T:\mathscr{D}(T) \subseteq \mathscr{F}\rightarrow \mathscr{F}\] with $A$-invariant dense domain $\mathscr{D}(T)$. One calls an operator $T$ \emph{semiregular}\footnote{One sometimes requires also that $T$ is closed, but it will be more convenient for us not to require this from the outset.}  if it has a densily defined adjoint operator $T^*$. In this case $T$ is automatically closable, and $T^*$ is a closed semiregular operator on $\mathscr{F}$. Also the closure of $T$ is again semiregular. One calls $T$ \emph{regular} if $T$ is closed and semiregular and $1+T^*T$ is invertible.  

\begin{Def}(\cite{BaJ83,NaW92}) Let $A$ be a C$^*$-algebra. We call the set $A^{\eta}$ of all regular operators on $A$, considered as a right Hilbert $A$-module over itself, the set of elements \emph{affiliated} with $A$. When $T\in A^{\eta}$, we write $T\eta A$. 
\end{Def}

For $T\in A^{\eta}$, the element \[z_T = T(1+T^*T)^{-1/2} \in M(A)\] is called the \emph{$z$-transform} of $T$. If $\pi\in \Mor(A,B)$, there exists a unique element $\pi(T)\in B^{\eta}$ such that $\pi(z_T) = z_{\pi(T)}$. 

In general, the affiliation relation is not easy to check. In \cite{KL11}, it was shown how the affiliation relation can be checked \emph{locally}. The following definition makes sense by \cite[Section 2.4]{KL11}.
\begin{Def} Let $A$ be a C$^*$-algebra, $T$ a semiregular operator on $A$, and $\pi$ a representation of $A$ on a Hilbert space $\Hsp$. Then there exists a unique closable, densily defined operator $T_{\pi}$ on $\Hsp$ with domain $\pi(\mathscr{D}(T))\Hsp$ such that \[T_{\pi}\pi(a)\xi = \pi(Ta)\xi,\quad a\in \mathscr{D}(T),\xi\in \Hsp.\] 
\end{Def}

It is easy to see that one then has $(T^*)_{\pi} \subseteq (T_{\pi})^*$. The following theorem is proven by combining \cite[Theorem 5.10]{KL11} with \cite[Theorem 4.2.1.(1)$\Leftrightarrow$(3) and Theorem 3.3.(1)$\Leftrightarrow$(2)]{KL11}.

\begin{Theorem}[\cite{KL11}]\label{TheoKLOr} Let $A$ be a C$^*$-algebra. Then a closed semiregular operator $T$ on $A$ is regular (and hence affiliated to $A$) if and only if $(T_{\pi})^*$ is the closure of $(T^*)_{\pi}$ for all irreducible representations $\pi$ of $A$. Moreover, a right $A$-submodule $\mathscr{D}\subseteq \mathscr{D}(T)$ is a core for $T$ if and only if $\mathscr{D}_{\pi}=\pi(\mathscr{D})\Hsp$ is a core for $T_{\pi}$ for each irreducible representation $\pi$ of $A$. 
\end{Theorem}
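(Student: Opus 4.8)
\emph{Approach.} I would derive the statement by combining three results of \cite{KL11} --- \cite[Theorem 4.2.1]{KL11}, which characterises regularity in terms of a single faithful representation, \cite[Theorem 3.3]{KL11}, which does the same for cores, and \cite[Theorem 5.10]{KL11}, the locality result relating these conditions to the localisations of $T$ --- the work being to reduce the faithful representation appearing there to the \emph{irreducible} representations in the statement.

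First I would fix a countable faithful family $\{\pi_i\}_{i\in J}$ of irreducible representations of $A$ (possible since $A$ is separable), together with any further given irreducible $\sigma$ one wishes to reach, and form the faithful nondegenerate representation $\pi = \sigma\oplus\bigoplus_{i}\pi_i$ on $\Hsp$. The crucial preliminary observation is that the localisation construction $T\mapsto T_\pi$ respects this direct sum: every vector of $\pi(\mathscr{D}(T))\Hsp$ has each component in $\pi_i(\mathscr{D}(T))\Hsp_i=\mathscr{D}(T_{\pi_i})$, on such vectors $T_\pi$ acts componentwise by the $T_{\pi_i}$, and $\pi(\mathscr{D}(T))\Hsp$ is a core for the componentwise closure $\bigoplus_i\overline{T_{\pi_i}}$. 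This last point is where I expect the one genuine argument: given an element of the graph of $\bigoplus_i\overline{T_{\pi_i}}$, one truncates it to finitely many summands, approximates in graph norm within each such summand using that $\mathscr{D}(T_{\pi_i})$ is a core for $\overline{T_{\pi_i}}$, and then pools the finitely many elements of $\mathscr{D}(T)$ occurring in all these approximations into one common finite set, so that the approximants assemble into a single vector of $\pi(\mathscr{D}(T))\Hsp$. Granting this, $\overline{T_\pi}=\bigoplus_i\overline{T_{\pi_i}}$; the same discussion applied to $T^*$ gives $\overline{(T^*)_\pi}=\bigoplus_i\overline{(T^*)_{\pi_i}}$, and hence, on passing to adjoints, $(T_\pi)^*=\bigoplus_i(T_{\pi_i})^*$. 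It follows that $(T_\pi)^*=\overline{(T^*)_\pi}$ holds if and only if $(T_{\pi_i})^*=\overline{(T^*)_{\pi_i}}$ for every $i$, and likewise that a submodule $\mathscr{D}\subseteq\mathscr{D}(T)$ has $\pi(\mathscr{D})\Hsp$ a core for $T_\pi$ if and only if $\pi_i(\mathscr{D})\Hsp_i$ is a core for $T_{\pi_i}$ for every $i$.

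With this bookkeeping in hand, the theorem will follow: \cite[Theorem 5.10]{KL11} reduces regularity of $T$ on the Hilbert $A$-module $A$, and the core property of $\mathscr{D}$, to the corresponding assertions for the localisation at the faithful representation $\pi$; \cite[Theorem 4.2.1.(1)$\Leftrightarrow$(3)]{KL11} rewrites the regularity criterion as ``$(T_\pi)^*$ is the closure of $(T^*)_\pi$'', and \cite[Theorem 3.3.(1)$\Leftrightarrow$(2)]{KL11} rewrites the core criterion as ``$\pi(\mathscr{D})\Hsp$ is a core for $T_\pi$''; combining these with the componentwise equivalences above --- and letting $\sigma$ range over all irreducibles to obtain the ``only if'' direction for every irreducible representation --- gives both halves. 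The main obstacle will be the compatibility of localisation with direct sums, and within it the pooling step, since $\pi(\mathscr{D}(T))\Hsp$ is strictly smaller than the algebraic direct sum $\bigoplus_i\pi_i(\mathscr{D}(T))\Hsp_i$, so the domains cannot be treated summand by summand; the remaining manipulations with adjoints and closures, and the appeals to \cite{KL11}, are routine.
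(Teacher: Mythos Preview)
Your proposal is correct and cites exactly the same three results from \cite{KL11} as the paper does; the paper's own ``proof'' consists solely of the sentence preceding the statement, namely that the theorem follows by combining \cite[Theorem~5.10]{KL11} with \cite[Theorem~4.2.1.(1)$\Leftrightarrow$(3) and Theorem~3.3.(1)$\Leftrightarrow$(2)]{KL11}, with no further argument.

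The one difference is that you insert an intermediate step: you read \cite[Theorem~5.10]{KL11} as reducing to a \emph{single faithful} representation, and then supply a direct-sum argument to pass from that to arbitrary irreducible representations. The paper, by contrast, treats the combination as immediate, which reflects that \cite[Theorem~5.10]{KL11} is already the local--global principle phrased pointwise over the primitive spectrum (equivalently, over irreducible representations), so no further reduction is needed. Your direct-sum argument is correct and the pooling step you flag is the right thing to worry about, but it is redundant here; once you look up the precise statement of \cite[Theorem~5.10]{KL11} you will see that the passage to irreducibles is built in, and the proof collapses to the bare citation the paper gives.
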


We will need the following particular case. It provides us with a class of C$^*$-algebras for which semiregularity already implies regularity.

\begin{Theorem}\label{TheoKL}  Let $A$ be a C$^*$-algebra such that $\pi(A)$ is unital for each irreducible representation $\pi$ of $A$. Then any closed, semiregular operator $T$ on $A$ is regular, and any right $A$-invariant dense domain of $T$ is a core for $T$. 
\end{Theorem}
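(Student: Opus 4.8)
The plan is to deduce this from Theorem~\ref{TheoKLOr}. Let $T$ be a closed, semiregular operator on $A$, and let $\pi$ be an arbitrary irreducible representation of $A$ on a Hilbert space $\Hsp$. By hypothesis $\pi(A)$ is unital, i.e.\ $\pi(A)$ contains a unit for $B(\Hsp)$; since an irreducible representation is in particular nondegenerate and $\pi(A)$ acts irreducibly, this forces $\pi(A) = \mathcal{K}(\Hsp)$ with $1_{B(\Hsp)}\in \mathcal{K}(\Hsp)$, hence $\Hsp$ is finite-dimensional. The key point is then that on a finite-dimensional Hilbert space every linear operator defined on a dense subspace is already everywhere defined and bounded: since $\mathscr{D}(T_{\pi}) = \pi(\mathscr{D}(T))\Hsp$ is a nonzero $\pi(A)$-invariant, hence $B(\Hsp)$-invariant, subspace of the irreducible module $\Hsp$, it equals $\Hsp$; and a linear operator on a finite-dimensional space is automatically closed and bounded, with $(T_\pi)^* $ likewise everywhere defined. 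The same applies to $(T^*)_{\pi}$, which has domain $\pi(\mathscr{D}(T^*))\Hsp = \Hsp$.

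Next I would verify the identity $(T_{\pi})^* = \overline{(T^*)_{\pi}} = (T^*)_{\pi}$ needed to invoke Theorem~\ref{TheoKLOr}. We always have $(T^*)_{\pi}\subseteq (T_{\pi})^*$ as noted in the excerpt; since both operators are now everywhere defined on the finite-dimensional space $\Hsp$, the inclusion of operators forces equality. Thus $(T_{\pi})^*$ is (trivially, being already closed) the closure of $(T^*)_{\pi}$ for every irreducible $\pi$, and Theorem~\ref{TheoKLOr} yields that $T$ is regular, hence $T\,\eta\,A$.

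For the statement about cores, let $\mathscr{D}\subseteq \mathscr{D}(T)$ be any right $A$-invariant dense submodule. By Theorem~\ref{TheoKLOr}, $\mathscr{D}$ is a core for $T$ provided $\mathscr{D}_{\pi} = \pi(\mathscr{D})\Hsp$ is a core for $T_{\pi}$ for each irreducible $\pi$. But $\mathscr{D}_{\pi}$ is a nonzero $\pi(A)$-invariant subspace of the irreducible finite-dimensional module $\Hsp$, hence all of $\Hsp$; and $\Hsp = \mathscr{D}(T_{\pi})$ is trivially a core for the everywhere-defined operator $T_{\pi}$. This completes the argument.

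I expect the main (and essentially only) obstacle to be the observation that the hypothesis ``$\pi(A)$ unital'' forces $\Hsp$ finite-dimensional for irreducible $\pi$; once that is in hand, everything reduces to elementary finite-dimensional linear algebra plus a direct citation of Theorem~\ref{TheoKLOr}. One should be slightly careful to state precisely what ``$\pi(A)$ is unital'' means (that $\pi(A)$, as a C$^*$-subalgebra of $B(\Hsp)$, has a unit, which must then be $1_{B(\Hsp)}$ by irreducibility/nondegeneracy), but no genuine analytic difficulty arises.
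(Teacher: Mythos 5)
The deduction at the very start of your argument is false, and the whole proof rests on it. From ``$\pi$ irreducible and $\pi(A)$ unital'' you conclude $\pi(A)=\mathcal{K}(\Hsp)$ and hence $\dim\Hsp<\infty$. Irreducibility only says that $\pi(A)$ has no proper closed invariant subspaces (equivalently, its commutant is $\C 1$); it does not force $\pi(A)$ to contain any compact operator. For example, if $A$ is any simple, unital, infinite-dimensional C$^*$-algebra (an irrational rotation algebra, $\mathcal{O}_2$, $C^*_r(F_2)$, \dots), then every irreducible representation $\pi$ has $\pi(A)\cong A$ unital, acts on an infinite-dimensional Hilbert space, and satisfies $\pi(A)\cap\mathcal{K}(\Hsp)=0$. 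Since the hypothesis of Theorem \ref{TheoKL} is satisfied by \emph{every} unital C$^*$-algebra, a correct proof must handle infinite-dimensional irreducible representations, and your finite-dimensional linear algebra step does not apply. (Your argument would be fine under the stronger hypothesis that all irreducible representations of $A$ are finite dimensional, which is what actually occurs when the theorem is invoked in Proposition \ref{PropExN}, but it does not prove the theorem as stated. There is also a smaller slip: $\mathscr{D}(T)$ is only a \emph{right} ideal, so $\pi(\mathscr{D}(T))\Hsp$ is not obviously $\pi(A)$-invariant; in finite dimensions one should instead just use that it is dense.)

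The missing idea, which is how the paper argues, is to exploit unitality at the level of the algebra $\pi(A)$ rather than the Hilbert space: $\pi(\mathscr{D}(T))$ is a dense right ideal of the unital C$^*$-algebra $\pi(A)$, and a dense right ideal of a unital Banach algebra is the whole algebra (a proper right ideal is contained in a maximal right ideal, which is closed because the invertibles form an open set). Hence the operator induced by $T$ on $\pi(A)$, equivalently $T_\pi$ on $\Hsp$, is everywhere defined; applying the same reasoning to $T^*$ (whose domain is also a dense right submodule, by semiregularity) shows $(T^*)_\pi$ is everywhere defined, so both are bounded and $(T^*)_\pi=(T_\pi)^*$, and Theorem \ref{TheoKLOr} gives regularity. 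The same dense-right-ideal argument shows $\pi(\mathscr{D})=\pi(A)$, hence $\mathscr{D}_\pi=\Hsp$ is trivially a core for the bounded operator $T_\pi$, which yields the core statement — all without any finiteness of $\dim\Hsp$.
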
 

\begin{proof} Let $\pi$ be an irreducible representation of $A$. Then left multiplication by $\pi(T)$ gives a well-defined, semiregular operator on the unital C$^*$-algebra $\pi(A)$ with domain $\pi(\mathscr{D}(T))$. However, as $\pi(\mathscr{D}(T))$ is a dense right ideal, it equals $\pi(A)$. Hence $\pi(T)$ is a bounded operator. The theorem hence follows from Theorem \ref{TheoKLOr}.
\end{proof}

\subsection{Quantum generating family for $C_0(G)$}

In this whole section, we keep the notation from Section \ref{SecFieldQTM}.
Consider the following coordinate functions on $H$, \[Q(q,t,a) = q,\quad \Omega(q,t,a) = t+t^{-1}, \quad A(q,t,a) = a\] and \[|N|^2 = \Omega - QA^2-Q^{-1}A^{-2},\quad |N^*|^2 =  \Omega - Q^{-1}A^2-QA^{-2}.\] Then $Q$ is strictly positive and bounded, with bounded inverse, $A$ is positive, unbounded and invertible, and $\Omega$ is unbounded and self-adjoint. It is also easily checked that $|N|^2$ and $|N^*|^2$ are positive, with $H^{(1)}$ (resp.~ $H^{(-1)}$) the zero set of $|N|^2$ (resp.~ $|N^*|^2$). We denote by $|N|$ (resp.~ $|N^*|$) the unique positive root of $|N|^2$ (resp.~ $|N^*|^2$).

More generally, for $k\geq 1$ we define $|N^k|,|(N^*)^k|\in C(H)$ as the functions \[|N^k| =  \left(\prod_{l=0}^{k-1} (\Omega - Q^{2l+1}A^2-Q^{-2l-1}A^{-2})\right)^{1/2},\]\[|(N^*)^k|= \left(\prod_{l=0}^{k-1} (\Omega - Q^{-2l-1}A^2-Q^{2l+1}A^{-2})\right)^{1/2}.\]

\begin{Def} For $f\in C_c(H)$ and $k\geq 1$, we define \[N^kf = s^k(|N^k|f),\qquad (N^*)^kf = s^{-k}(|(N^*)^k|f)\] as elements in $C_0(G)$. 
\end{Def}

Note that this is meaningful since  $|N^k|f \in C_0(H^{(k)})$ (reading $N^{-1}= N^*$). 

\begin{Def} We define $C_c(G)$ to be the two-sided $^*$-ideal of $C_0(G)$ ge-nerated by $C_c(H)$. 
\end{Def} 

\begin{Lem} The $^*$-algebra $C_c(G)$ is dense in $C_0(G)$, and \begin{equation}\label{EqComp} C_c(G) = C_c(H)C_0(G) = C_0(G)C_c(H).\end{equation} 
Moreover, $\{N^kf \mid f\in C_c(H)\}$ is dense in $\mathscr{N}_k$.  
\end{Lem}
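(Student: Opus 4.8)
The plan is to verify the three assertions separately, exploiting the presentation of $C_0(G)$ from Proposition \ref{PropDefRel} together with the grading into spectral subspaces $\mathscr{N}_k$ and the description \eqref{EqComp} one is asked to establish.

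First I would settle the two identities in \eqref{EqComp}. Since $C_c(H)$ is an ideal of $C_0(H)$ and the latter sits inside $C_0(G)$, the set $C_c(H)C_0(G)$ is a right ideal, and $C_0(G)C_c(H)$ a left ideal; one checks they are $^*$-stable using $(xf)^*=f^*x^*$ and the fact that $C_c(H)$ is $^*$-closed, so each is a two-sided $^*$-ideal (using that $C_0(H)$ acts non-degenerately, i.e.\ $C_c(H)C_0(H)=C_c(H)$ and a bounded approximate unit argument). The two-sided $^*$-ideal generated by $C_c(H)$ is the smallest such ideal, so it is contained in $C_c(H)C_0(G)$ and in $C_0(G)C_c(H)$; conversely $C_c(H)C_0(G)$ and $C_0(G)C_c(H)$ are each contained in the generated ideal. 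This forces all three to coincide. The key point making $C_c(H)C_0(G)$ two-sided (not just a right ideal) is that $fx\in C_c(H)C_0(G)$ for $f\in C_c(H)$, $x\in C_0(G)$: writing $x$ as a limit of finite sums $\sum s^k g_k$ with $g_k\in C_0(H^{(k)})$ and $f\in C_c(H)$, one has $f s^k g_k = s^k\bigl(\theta^k(f)g_k\bigr)$ from the defining relations (or $= s^k(\theta^{-k}(\cdot)\cdots)$ depending on convention), and $\theta^{\pm k}(f)\in C_c(H^{(\mp k)})$ has compact support, so $f s^k g_k = s^k(h)$ with $h\in C_c(H^{(k)})C_0(H^{(k)})=C_c(H^{(k)})$, hence lies in $C_c(H)C_0(G)$ after absorbing; more cleanly, $h = \theta^k(f)\,g_k$ and one can extract a compactly supported factor from $h$ since $\theta^k(f)$ is compactly supported, so $s^k h\in C_c(G)$ however $C_c(G)$ ends up being described.

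Next, density of $C_c(G)$ in $C_0(G)$: by Proposition \ref{PropDefRel} and the spectral decomposition, the linear span of $\bigcup_k \mathscr{N}_k$ is dense, so it suffices to approximate each element of $\mathscr{N}_k$ by elements of $C_c(G)$. Since $\mathscr{N}_k\cong C_0(H^{(k)})$ as a Banach space and $C_c(H^{(k)})$ is dense there, and since $s^k f$ for $f\in C_c(H^{(k)})$ lies in $C_c(G)$ (it equals $f'\cdot s^k f$ for a suitable $f'\in C_c(H)$ with $f'=1$ on the support of $f$, using $H^{(k)}$ open in $H$ and Tietze/Urysohn to produce $f'\in C_c(H)$), the density follows.

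Finally, $\{N^k f\mid f\in C_c(H)\}$ is dense in $\mathscr{N}_k$ (for $k\geq 1$; the case of $\mathscr{N}_{-k}$ is symmetric via $(N^*)^k$). Under the identification $\mathscr{N}_k\cong C_0(H^{(k)})$, $s^k\mapsto$ the module generator, we have $N^k f = s^k(|N^k| f)$, so I must show $\{|N^k| f\mid f\in C_c(H)\}$ is dense in $C_0(H^{(k)})$. The function $|N^k|$ is continuous on $H$, strictly positive precisely on $H^{(k)}$ (its zero set is the complement, as recorded after its definition), hence $|N^k|$ generates $C_0(H^{(k)})$ as an ideal: the set $|N^k|\,C_c(H)$ has closure containing, for every compact $K\subseteq H^{(k)}$, all of $C_K(H^{(k)})$ (functions supported in $K$), because on $K$ the function $|N^k|$ is bounded below by a positive constant, so $|N^k|^{-1}$ restricted near $K$ can be approximated by compactly supported continuous functions and $g = |N^k|\cdot(|N^k|^{-1}g)$ for $g$ supported in $K$. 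Taking the union over all such $K$ gives density in $C_0(H^{(k)})$. The main obstacle is purely bookkeeping: keeping the $\theta$-twists in the multiplication rule straight when commuting $C_c(H)$ past $s^k$, and confirming that multiplying by a compactly supported function of $H$ indeed lands one inside the ideal generated by $C_c(H)$ rather than merely inside $\mathscr{N}_k$; once the commutation relation $f\cdot s^k g = s^k\,\theta^{k}(f\cdot\theta^{-k}(g))$ is applied correctly this is immediate, since $H^{(k)}$ open makes every $C_c(H^{(k)})$-element extendable to $C_c(H)$.
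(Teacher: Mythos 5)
The density statements in your proposal are fine: density of $C_c(G)$ follows as you say (the paper gets it even more directly from non-degeneracy of the inclusion $C_0(H)\subseteq C_0(G)$), and your reduction of the density of $\{N^kf\mid f\in C_c(H)\}$ in $\mathscr{N}_k$ to the density of $|N^k|C_c(H)$ in $C_0(H^{(k)})$, using that $|N^k|$ vanishes exactly off $H^{(k)}$, is the same argument the paper makes implicitly via $\|N^kf-s^kg\|=\||N^k|f-g\|$. (Small slip: to see $s^kf\in C_c(G)$ for $f\in C_c(H^{(k)})$, multiply on the \emph{right} by $f'\in C_c(H)$ with $f'=1$ on $\mathrm{supp}\,f$; multiplying on the left, as you do, requires $f'=1$ on $\theta^k(\mathrm{supp}\,f)$.)

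The proof of \eqref{EqComp}, however, has a genuine gap. First, the claim that $C_c(H)C_0(G)$ is $^*$-stable ``using $(xf)^*=f^*x^*$'' is circular: $(fx)^*=x^*f^*$ lies in $C_0(G)C_c(H)$, and $^*$-stability of $C_c(H)C_0(G)$ is precisely the nontrivial identity $C_c(H)C_0(G)=C_0(G)C_c(H)$ to be proved. Second, the computation you offer treats $f\cdot s^kg_k$, which lies in $C_c(H)C_0(G)$ by definition, so it cannot by itself establish two-sidedness; what must be shown is that a compactly supported factor on one side of an arbitrary $x\in C_0(G)$ can be traded for one on the other side. Third, and most importantly, the step ``$\theta^{\pm k}(f)$ has compact support'' is not bookkeeping, and as stated it is not even correct: $f\circ\theta^{k}$ need not vanish at the boundary of $H^{(k)}$ inside $H$, so it is not in $C_c(H^{(k)})$, and for a general partial automorphism the symbol of $f\cdot s^kg$ need not be compactly supported at all. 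What makes the argument work is the specific geometry: $\theta$ fixes the coordinates $(q,t)$, and inside $H$ the coordinate $a$ is pinned to $\lbrack\sqrt{t/q},\sqrt{q/t}\rbrack$, so the set $\{(q,t,a)\in H\mid \min\{q,t\}\geq\varepsilon\}$ is compact and $\theta$-invariant. This is what the paper exploits: it produces a single $h\in C_c(H)$, equal to $1$ on that set, with $h\,(s^kg)f=(s^kg)f$ for \emph{all} $k$ and $g$, hence $hxf=xf$ for all $x\in C_0(G)$. This uniformity in $k$ is indispensable, because a general $x\in C_0(G)$ is only a norm limit of finite sums $\sum_k s^kg_k$, while $C_c(H)C_0(G)$ and $C_0(G)C_c(H)$ are algebraic, non-closed sets; your proposal never addresses this limit step. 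Once you isolate the compactness of $\{\min\{q,t\}\geq\varepsilon\}$ and the resulting uniform absorbing element, your route (extracting a compact factor on the opposite side and then taking adjoints) becomes a correct mirror image of the paper's proof.
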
 

In \eqref{EqComp}, the right hand sides consist of linear combinations of products of elements in the corresponding sets.

\begin{proof} Density of $C_c(G)$ in $C_0(G)$ follows immediately since $C_c(H)$ is dense in $C_0(H)$ and the inclusion $C_0(H) \subseteq C_0(G)$ is non-degenerate.

For $f\in C_c(H)$ and $g\in C_0(H^{(k)})$, we have that \[\|N^k f-s^k g\| = \||N^k|f-g\|.\] It follows that $\{N^kf \mid f\in C_c(H)\}$ is dense in $\mathscr{N}_k$. 

To prove \eqref{EqComp}, pick $f\in C_c(H)$ and choose $\varepsilon>0$ such that $f(q,t,a)=0$ for all $(q,t,a)$ with $\mathrm{min}\{q,t\}<\varepsilon$. Choose $h\in C_c(H)$ such that $h(q,t,a) = 1$ for all $(q,t,a)$ with $\mathrm{min}\{q,t\}\geq \varepsilon$.  Then for $g\in C_0(H^{(k)})$, it follows that \[h(s^kg)f = s^k(\theta^k(h\theta^{-k}(gf))) = (s^k g)f.\] Hence $hC_0(G)f = C_0(G)f$ and so \[C_c(G) = C_0(G)C_c(H)C_0(G) \subseteq C_c(H)C_0(G)C_c(H)C_0(G) \subseteq C_c(H)C_0(G).\] We obtain $C_c(G) = C_c(H)C_0(G)$. As $C_c(G)$ is $^*$-invariant, the other identity in \eqref{EqComp} follows.
\end{proof} 

With a small modification, the proof shows in fact that $C_c(G)$ has local units in $C_c(H)$, that is, for each finite collection $x_1,\ldots, x_n\in C_c(G)$ there exists $e\in C_c(H)$ with $x_i = ex_i = x_ie$ for all $i$. In particular, we can write any element in $C_c(G)$ as $fx$ with $f\in C_c(H)$ and $x\in C_0(G)$.  

We want to interpret the symbol $N$ as an operator affiliated with $C_0(G)$. 

\begin{Prop}\label{PropExN} There exists an operator $N$ affiliated with $C_0(G)$ and with (invariant) core $C_c(G)$ such that \begin{equation}\label{EqDefN} N(fx) = (Nf)x,\quad f\in C_c(H),\quad x\in C_0(G).\end{equation} Moreover, $C_c(G)$ is also a core for $N^*$ and \begin{equation}\label{EqDefNstar} N^*(fx) = (N^*f)x,\quad f\in C_c(H),\quad x\in C_0(G).\end{equation}
\end{Prop}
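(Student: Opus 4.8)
The idea is to define $N$ by the prescribed formula on the core $C_c(G)$, check it is a well-defined semiregular operator on the right Hilbert $C_0(G)$-module $C_0(G)$, and then invoke Theorem \ref{TheoKL} to upgrade semiregularity to regularity (i.e.\ affiliation) and to conclude automatically that $C_c(G)$ is a core. For this last step one needs to know that $\pi(C_0(G))$ is unital for every irreducible representation $\pi$ of $C_0(G)$; I would establish this by classifying the irreducible representations of $C_0(G)$ through the fibers $C_0(G_q)$. For $0<q<1$, Lemma \ref{LemIrRepq} gives $C_0(G_q)\cong c_0\text{-}\oplus_n B(\C^n)$, whose irreducible representations are the finite-dimensional blocks $B(\C^n)$, hence unital; for $q=1$, Lemma \ref{LemIrRep1} gives $C_0(G_1)\cong C_0(\C\times\R_0^+)$, a commutative C$^*$-algebra whose irreducible representations are evaluation at points, again landing in $\C$ (unital). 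Since by \eqref{EqNormSup} every irreducible representation of $C_0(G)$ factors through some fiber $C_0(G_q)$, the hypothesis of Theorem \ref{TheoKL} is met.

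**Construction of $N$.** First I would verify that \eqref{EqDefN} unambiguously defines a $C_0(G)$-linear operator on the dense domain $C_c(G)$. Using local units in $C_c(H)$: any $y\in C_c(G)$ can be written $y=fx$ with $f\in C_c(H)$, $x\in C_0(G)$, and if $f_1x_1=f_2x_2$ then choosing $e\in C_c(H)$ with $ef_i=f_i$ one checks $(N f_1)x_1 = s(|N|f_1)x_1 = s(|N|e)f_1x_1 = s(|N|e)f_2x_2 = (Nf_2)x_2$, so $N$ is well defined; linearity and right $C_0(G)$-linearity are then immediate. Concretely $Ny = s(|N|e)\,y$ for any local unit $e$ of $y$, which also shows $N y\in C_c(G)$, so $C_c(G)$ is $N$-invariant. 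Similarly one defines $N^*$ via \eqref{EqDefNstar}, i.e.\ $N^*y = s^{-1}(|N^*|e)\,y$. The adjointness relation $\langle N y, y'\rangle = \langle y, N^* y'\rangle$ (inner product $\langle a,b\rangle=a^*b$) reduces, after inserting local units, to the identity $(|N|f)^*s^{-1}g = f^* s^{-1}(|N^*|g)$ in $C_0(G)$ for $f,g\in C_c(H)$; unwinding with the relations of Proposition \ref{PropDefRel} (namely $(s g')^* = s^{-1}\theta^{-1}(g'^*)$ and $s^{-1} g = \theta(s^{-1}g\ \cdot)$…) this is the pointwise statement $|N^k|\circ\theta^{-1}\cdot\theta^{-1} = |(N^*)^k|$ as functions, which follows directly from the explicit formulas for $|N^k|$ and $|(N^*)^k|$ together with $\theta(q,t,a)=(q,t,qa)$ — a routine shift of the index in the product. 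Hence $N$ is semiregular with densely defined adjoint extending $N^*$ (defined on the same dense domain $C_c(G)$). Replacing $N$ by its closure, we get a closed semiregular operator.

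**Conclusion.** By Theorem \ref{TheoKL}, the closed semiregular operator $\overline N$ is regular, hence affiliated with $C_0(G)$, and every right $C_0(G)$-invariant dense domain — in particular $C_c(G)$ — is a core; this gives \eqref{EqDefN}. The same theorem applied to the closed semiregular operator obtained from $N^*$ (whose adjoint conversely extends $N$, by the symmetric computation) shows $C_c(G)$ is a core for it and that it is regular; combined with $(N^*)_{\pi}\subseteq(N_{\pi})^*$ and the core statement, this operator is exactly $\overline N{}^*$, giving \eqref{EqDefNstar}.

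**Main obstacle.** The genuinely delicate point is the bookkeeping identity $|N^k|\circ\theta^{-1} = |(N^*)^k|$ (with the appropriate composition-with-$\theta$ bookkeeping) that makes the $^*$-operation interchange $N$ and $N^*$ — one must be careful that the functions $|N^k|f$ really lie in $C_0(H^{(k)})$ so that the symbols $s^k(|N^k|f)$ are legitimate elements of $\bigoplus_k\mathscr N_k$, and that the domains of $N$ and $N^*$ genuinely coincide with $C_c(G)$ rather than some larger space. Everything else is an application of the already-established machinery of Theorems \ref{TheoKL} and \ref{TheoKLOr} together with the local-unit structure of $C_c(G)$.
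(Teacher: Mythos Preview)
Your approach is essentially the same as the paper's: define $N$ on the dense $^*$-ideal $C_c(G)$, verify the adjoint relation $(gy)^*(Nf)x=((N^*g)y)^*fx$ (which, as you note, reduces to the pointwise identity $|N|\circ\theta^{-1}=|N^*|$), and then invoke Theorem~\ref{TheoKL} after observing that every irreducible representation of $C_0(G)$ factors through a fiber and hence has finite-dimensional (in particular unital) image by Lemmas~\ref{LemIrRepq} and~\ref{LemIrRep1}. One small remark: the fact that irreducibles factor through fibers is not a consequence of \eqref{EqNormSup} but rather of Schur's lemma applied to the central copy of $C_0((0,1\rbrack)$; otherwise your argument is correct and matches the paper.
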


\begin{proof} For $f,g \in C_c(H)$ and $x,y\in C_0(G)$, an easy computation shows that \[ (gy)^* (Nf)x = ((N^*g)y)^* fx.\] Hence there exists a well-defined semiregular operator $N$ with core $C_c(G)$ satisfying \eqref{EqDefN} and with $C_c(G) \subseteq \mathscr{D}(N^*)$ satisfying \eqref{EqDefNstar}. 

Now since any irreducible representation of $C_0(G)$ must factor over some $C_0(G_q)$, it follows from Lemma \ref{LemIrRepq} and Lemma \ref{LemIrRep1} that all irreducible representations of $C_0(G)$ are finite dimensional. By Theorem \ref{TheoKL} we conclude that $N$ is affiliated to $C_0(G)$ and $C_c(G)$ a core for $N^*$. 
\end{proof} 

As the coordinate functions $Q^{\pm 1}, A^{\pm 1} \in C(H)$ are affiliated with $C_0(H)$, we can also interpret them as elements affiliated with $C_0(G)$. It is in fact clear that $C_c(G)$ is also an invariant core for each of these operators. Note that on this common core, these operators then satisfy the relations \[AN = QNA,\quad  \lbrack N,N^*\rbrack = (Q-Q^{-1})(A^2-A^{-2}),\] which, for $Q$ considered a fixed positive real number strictly smaller than $1$, are precisely the relations for the quantum group $U_q(\mathfrak{su}(2))$ (up to rescaling). 

We will need a small extension of the above result. 

\begin{Prop}\label{PropExT} There exists a unique regular operator $T \eta M_2(C_0(G))$ such that $M_2(C_c(G))$ is a core for $T$ and $T = \begin{pmatrix} A & N \\ 0 & A^{-1}\end{pmatrix}$ on $M_2(C_c(G))$. Moreover, $M_2(C_c(G))$ is also a core for $T^*$, with $T^* = \begin{pmatrix} A & 0 \\ N^* & A^{-1}\end{pmatrix}$ on $M_2(C_c(G))$.  
\end{Prop}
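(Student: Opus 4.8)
The strategy is to reduce the claim about the $2\times 2$-matrix operator $T$ over $C_0(G)$ to the already-established affiliation of $N$ (Proposition~\ref{PropExN}) together with the obvious affiliation of the bounded-below operators $A^{\pm 1}$, by appealing to the local criterion of Theorem~\ref{TheoKL}. First I would define $T$ on the dense right $M_2(C_0(G))$-submodule $M_2(C_c(G))$ by the displayed matrix formula, using that $C_c(G)$ is an invariant core for each of $A$, $A^{-1}$ and $N$; since $M_2(C_c(G)) = M_2(C_c(H))M_2(C_0(G))$ and each matrix entry sends $C_c(H)$ into $C_0(G)$, the formula $T(fx)=(Tf)x$ for $f\in M_2(C_c(H))$, $x\in M_2(C_0(G))$ makes sense and is $M_2(C_0(G))$-linear with invariant domain. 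Dually, the transpose-conjugate formula for $T^*$ is defined on the same submodule, and a direct computation with the inner product—exactly as in the proof of Proposition~\ref{PropExN}, entry by entry using $(gy)^*(Nf)x = ((N^*g)y)^*fx$ and self-adjointness of $A^{\pm 1}$—shows $\langle Tu, v\rangle = \langle u, T^\sharp v\rangle$ for $u,v$ in $M_2(C_c(G))$, where $T^\sharp$ is the claimed formula for $T^*$. Hence $T$ is semiregular with $M_2(C_c(G))$ contained in the domain of $T^*$ and satisfying the stated formula; replacing $T$ by its closure keeps it closed and semiregular.

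Next I would invoke regularity via Theorem~\ref{TheoKL}: it suffices to check that $\pi(M_2(C_0(G)))$ is unital for every irreducible representation $\pi$ of $M_2(C_0(G))$. But irreducible representations of $M_2(B)$ are of the form $M_2(\sigma)$ for $\sigma$ an irreducible representation of $B$, and as observed in the proof of Proposition~\ref{PropExN}, every irreducible representation of $C_0(G)$ factors through some $C_0(G_q)$ and is therefore finite-dimensional by Lemma~\ref{LemIrRepq} and Lemma~\ref{LemIrRep1}; consequently every irreducible representation of $M_2(C_0(G))$ has finite-dimensional, hence unital, image. Theorem~\ref{TheoKL} then yields that the closure of $T$ is regular, i.e.\ affiliated with $M_2(C_0(G))$, and that any right-invariant dense domain—in particular $M_2(C_c(G))$—is a core for it; the same theorem applied to the semiregular operator $T^\sharp$ (its closure is then also regular) shows $M_2(C_c(G))$ is a core for $T^*$ and that $T^* = T^\sharp$ has the stated matrix form there. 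Uniqueness of $T$ with these properties is automatic: a closed operator is determined by its restriction to a core.

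I do not expect a serious obstacle here; the proposition is essentially a bookkeeping extension of Proposition~\ref{PropExN} to $2\times 2$ matrices. The one point requiring mild care is the verification that the formal $T^\sharp$ really is the adjoint (not merely contained in it) on the level of the module inner product, so that $T$ comes out semiregular with the claimed $T^*$; this is the entry-wise computation $(gy)^*(Nf)x=((N^*g)y)^*fx$ from Proposition~\ref{PropExN} combined with the self-adjointness of the diagonal entries, together with the observation that off-diagonal cross terms pair correctly because the lower-left entry of $T^\sharp$ is exactly $N^*$. Once this identity is in hand, invoking Theorem~\ref{TheoKL} to upgrade semiregularity to regularity and to identify the core is immediate.
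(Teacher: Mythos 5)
Your proposal is correct and is essentially the paper's own argument: the paper simply states that the proof is identical to that of Proposition~\ref{PropExN}, and what you write out -- the entrywise adjoint computation on $M_2(C_c(G))$ giving semiregularity, followed by Theorem~\ref{TheoKL} applied via the finite-dimensionality (hence unital image) of all irreducible representations of $M_2(C_0(G))$, which come from irreducible representations of $C_0(G)$ factoring through some $C_0(G_q)$ -- is precisely that argument transported to $2\times 2$ matrices.
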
 

\begin{proof} The proof is identical to that of Proposition \ref{PropExN}.
\end{proof} 

The above constructions can also be performed on the localisations $C_0(G_q)$, leading to the $^*$-algebra $C_c(G_q)$ and the affiliated operators $N_q,A_q\eta C_0(G_q)$ and $T_q\eta M_2(C_0(G_q))$. Since $C_c(H)_q = C_c(H_q)$, we also have that $C_c(G_q)$ is the localisation of $C_c(G)$ at $q$, and $N_q,A_q,T_q$ are the localisations of $N,A,T$.

\subsection{A quantum generating family}

We aim to show that the coordinate function $Q$, together with the operator $T\eta M_2(C_0(G))$ defined in Proposition \ref{PropExT}, is a quantum generating family for $C_0(G)$ in the sense of \cite{Wor95}. We will need some preliminaries. 

\begin{Def}(\cite[Definition 4.1]{Wor95}) Let $A,C$ be C$^*$-algebras. We say $A$ is \emph{generated} by (the quantum family of unbounded operators) $T\in (C\otimes A)^{\eta}$ if the following holds: for any Hilbert space $\Hsp$, any representation $\pi$ of $A$ on $\Hsp$ and any C$^*$-subalgebra $B$ of $B(\Hsp)$, the affiliation $(\id\otimes \pi)(T)\in(C\otimes B)^{\eta}$ implies that $\pi\in \Mor(A,B)$.
\end{Def} 

Here the separability of $A$ and $C$ is crucial to ensure that this definition satisfies the following, to be expected property.

\begin{Lem}\label{LemSepRep} Let $A,C$ be C$^*$-algebras. Let $T\in (C\otimes A)^{\eta}$ generate $A$. Then $T$ separates representations of $A$: if $\pi,\rho$ are representations of $A$ on a Hilbert space $\Hsp$ with $(\id\otimes \pi)T = (\id\otimes \rho)T$, then $\pi=\rho$.
\end{Lem}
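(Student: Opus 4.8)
The plan is to reduce the separation-of-representations statement to the defining property of generation by applying it to a cleverly chosen C$^*$-subalgebra of $B(\Hsp)$. First I would set $B = \overline{(\pi(A)\cup\rho(A))}$, the C$^*$-algebra generated by $\pi(A)$ and $\rho(A)$ inside $B(\Hsp)$; here separability of $A$ is used to guarantee that $B$ is again separable, so that the generation hypothesis is applicable with this $B$. The goal will then be to show $\pi,\rho\in\Mor(A,B)$, since two morphisms into $M(B)$ that agree on a generating set (or rather, whose images under $\id\otimes(\cdot)$ of $T$ coincide) must be equal; but in fact the cleanest route is to show directly that $\pi = \rho$.

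The key step is to observe that, by functoriality of the $z$-transform, $(\id\otimes\pi)(T)$ and $(\id\otimes\rho)(T)$ are automatically affiliated with $C\otimes B$, because $\pi(A),\rho(A)\subseteq B$ and hence $(\id\otimes\pi)(z_T), (\id\otimes\rho)(z_T)\in M(C\otimes B)$, which is exactly the condition characterising affiliation via the $z$-transform. Now apply the definition of ``$T$ generates $A$'' with this Hilbert space $\Hsp$, this representation (say $\pi$) and this subalgebra $B$: the affiliation $(\id\otimes\pi)(T)\in(C\otimes B)^{\eta}$ yields $\pi\in\Mor(A,B)$, and likewise $\rho\in\Mor(A,B)$.

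Finally, I would invoke the hypothesis $(\id\otimes\pi)(T) = (\id\otimes\rho)(T)$ to conclude $\pi = \rho$. One argument: from $(\id\otimes\pi)(T) = (\id\otimes\rho)(T)$ we get $(\id\otimes\pi)(z_T) = (\id\otimes\rho)(z_T)$ in $M(C\otimes B)$; consider the C$^*$-algebra $B' = \overline{(\id\otimes\pi)(A)}$ wait — better: we already know $\pi,\rho\in\Mor(A,B)$ and that $T$ generates $A$, so applying the generation property a second time, now with $B$ replaced by the (separable) C$^*$-subalgebra $B_0 \subseteq B$ generated by $\pi(A)$ alone: since $(\id\otimes\rho)(T) = (\id\otimes\pi)(T)\in(C\otimes B_0)^{\eta}$, generation forces $\rho\in\Mor(A,B_0)$, i.e.\ $\rho(A)\subseteq M(B_0)$. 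By symmetry $\pi(A)\subseteq M(B_1)$ where $B_1$ is generated by $\rho(A)$. Then on a common core one checks $\pi$ and $\rho$ agree because they induce the same action of $T$ and hence of $z_T$ on $\Hsp$, and $z_T$ together with $C$ generates; the precise bookkeeping here — matching domains and cores of the unbounded operators $(\id\otimes\pi)(T)$ and $(\id\otimes\rho)(T)$ and descending from the module-level identity to the Hilbert-space-level identity of representations — is the main obstacle. The essential point that makes it work is that ``generates'' is strong enough to recover the morphism from the affiliated operator alone, so once both $\pi$ and $\rho$ are morphisms into $M(B)$ landing their images in the subalgebra generated by the \emph{other} one, a short argument using irreducible representations (or the faithful realisation of $B$ on $\Hsp$) identifies them.
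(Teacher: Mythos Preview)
The paper does not give its own argument here; it simply cites \cite[Theorem 6.2.(I)$\Rightarrow$(II)]{Wor95}. Your attempt at a direct proof sets things up correctly but has a genuine gap at the closing step. Obtaining $\pi,\rho\in\Mor(A,B)$ and then iterating with $B_0=\pi(A)$, $B_1=\rho(A)$ only yields $\rho(A)\subseteq M(\pi(A))$ and $\pi(A)\subseteq M(\rho(A))$; this does not force $\pi=\rho$. Your last move --- that ``$\pi$ and $\rho$ induce the same action of $z_T$'' and that ``$z_T$ together with $C$ generates'' --- is precisely the assertion in question: the hypothesis is a single equation $(\id\otimes\pi)(z_T)=(\id\otimes\rho)(z_T)$ in $M(C\otimes B(\Hsp))$, and nothing in the \emph{definition} of generation says that the slices of $z_T$ generate $A$ as a C$^*$-algebra. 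You acknowledge this yourself when you call it ``the main obstacle'' and then wave at irreducible representations without saying what they would do.

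The missing device is a diagonal trick. Set $\sigma=\pi\oplus\rho$ on $\Hsp\oplus\Hsp$, choose a separable $B_0\subseteq B(\Hsp)$ with $(\id\otimes\pi)(T)\in(C\otimes B_0)^\eta$ (for instance $B_0=\pi(A)$), and let $D=\{x\oplus x:x\in B_0\}\subseteq B(\Hsp\oplus\Hsp)$. Since $(\id\otimes\sigma)(z_T)=z\oplus z$ with $z=(\id\otimes\pi)(z_T)=(\id\otimes\rho)(z_T)\in M(C\otimes B_0)$, one checks $(\id\otimes\sigma)(T)\in(C\otimes D)^\eta$. Generation now forces $\sigma\in\Mor(A,D)$, so $\pi(a)\oplus\rho(a)\in M(D)=\{m\oplus m:m\in M(B_0)\}$ for every $a\in A$, whence $\pi(a)=\rho(a)$.
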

\begin{proof} This follows from \cite[Theorem 6.2.(I)$\Rightarrow$(II)]{Wor95}.
\end{proof}

We will need the following criterion to know whether a quantum family of operators generates a C$^*$-algebra, see \cite[Theorem 4.2 and Remark 4.4]{Wor95}.

\begin{Lem}\label{LemCritWor} Assume that $A,C$ are C$^*$-algebras, and $T\in (C\otimes A)^{\eta}$. Assume that the following two conditions are satisified:
\begin{enumerate}
\item The operator $T$ separates representations of $A$.
\item There exists an element $r\in A$ such that for any representation $\pi$ of $A$ on a Hilbert space $\Hsp$ and any C$^*$-subalgebra $B$ of $B(\Hsp)$, the affiliation $(\id\otimes \pi)T \in (C\otimes B)^{\eta}$ implies $\pi(r)B$ and $B\pi(r)$ contained and dense in $B$ (and, in particular, $\pi(r)\in M(B)$).  
\end{enumerate}
Then $A$ is generated by $T$. 
\end{Lem}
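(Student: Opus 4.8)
The plan is to verify directly the definition of ``$A$ is generated by $T$'' using the two hypotheses. So fix a Hilbert space $\Hsp$, a representation $\pi$ of $A$ on $\Hsp$, and a C$^*$-subalgebra $B \subseteq B(\Hsp)$ such that $(\id\otimes\pi)T \in (C\otimes B)^{\eta}$. We must produce from this the conclusion $\pi\in\Mor(A,B)$, i.e.\ that $\pi$ is non-degenerate as a map $A\to M(B)$ and that $\pi(A)\subseteq M(B)$ with $\pi(A)B$ dense in $B$.

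First I would use condition (2): the affiliation hypothesis immediately yields an element $r\in A$ with $\pi(r)\in M(B)$ and with $\pi(r)B$, $B\pi(r)$ both contained and dense in $B$. The idea is then to bootstrap from the single element $r$ to all of $A$ by comparing $\pi$ with a second, manifestly $\Mor$-valued representation. Concretely, $\pi(r)\in M(B)$ together with density of $\pi(r)B$ in $B$ lets us build a representation $\rho$ of $A$ on $\Hsp$ that \emph{does} land in $\Mor(A,B)$: one takes the non-degenerate part of $\pi$ relative to the closed subspace $[\,B\Hsp\,]=[\,\pi(r)B\Hsp\,]$, equivalently one restricts attention to the sub-representation of $\pi$ on $[B\Hsp]$, and checks — using that $\pi(r)$ multiplies $B$ into a dense subset of $B$ on both sides — that this restricted representation maps $A$ into $M(B)$ non-degenerately. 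This is the place where separability of $A$ (hence that $B$ has a sequential approximate unit, and that the relevant strict limits behave well) is used, exactly as flagged in the remark following the definition of ``generated''. Having produced $\rho\in\Mor(A,B)\subseteq\Mor(A,B(\Hsp))$, the functoriality of affiliated elements under morphisms (the $z$-transform characterization recalled before Theorem~\ref{TheoKLOr}) gives $(\id\otimes\rho)T\in(C\otimes B)^{\eta}\subseteq(C\otimes B(\Hsp))^{\eta}$, and moreover $(\id\otimes\rho)T$ and $(\id\otimes\pi)T$ have the same $z$-transform because $\rho$ and $\pi$ agree on the ideal generated by $r$ and hence on enough of $A$ to pin down the bounded operator $z_{(\id\otimes\pi)T}$; so $(\id\otimes\rho)T=(\id\otimes\pi)T$.

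Now condition (1) finishes the argument: $T$ separates representations of $A$, and $(\id\otimes\rho)T=(\id\otimes\pi)T$ forces $\rho=\pi$. Since $\rho\in\Mor(A,B)$ by construction, we conclude $\pi\in\Mor(A,B)$, which is precisely what it means for $T$ to generate $A$.

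The main obstacle is the middle step: upgrading the single ``multiplier-like'' element $r$ to a bona fide morphism $\rho\in\Mor(A,B)$ and checking that it still reproduces $(\id\otimes\pi)T$. This is really the content of \cite[Theorem 4.2 and Remark 4.4]{Wor95}; the cleanest route is to cite Woronowicz directly for this reduction rather than re-derive it, after which hypotheses (1) and (2) plug in exactly as above. If one prefers a self-contained treatment, the delicate points are (a) that $[B\Hsp]$ is $\pi(A)$-invariant — which follows since $\pi(a)\pi(r)\in\pi(A)M(B)$ need not a priori lie in $M(B)$, so one must instead argue via the density of $B\pi(r)$ that $\pi(a)B\Hsp\subseteq[B\Hsp]$ — and (b) the strict-continuity/separability argument ensuring the restricted representation is genuinely non-degenerate into $M(B)$.
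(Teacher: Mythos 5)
The paper does not actually prove this lemma: the sentence preceding the statement, ``see \cite[Theorem 4.2 and Remark 4.4]{Wor95}'', \emph{is} the proof, since Theorem 4.2 combined with Remark 4.4 of \cite{Wor95} is precisely the statement being quoted. So your fallback position --- cite Woronowicz --- coincides with what the paper does; note, however, that what you propose to outsource is not merely ``the middle step'' of your argument but the entire lemma.

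Taken as a self-contained argument, your sketch has a genuine gap at its central step. With the standing convention (in the paper and in \cite{Wor95}) that $B\subseteq B(\Hsp)$ acts nondegenerately, one has $\lbrack B\Hsp\rbrack=\Hsp$, so ``the non-degenerate part of $\pi$ relative to $\lbrack B\Hsp\rbrack$'' is just $\pi$ itself, and asserting that this restricted representation maps $A$ into $M(B)$ non-degenerately is exactly the conclusion $\pi\in\Mor(A,B)$ you are trying to prove --- the construction is circular. (If instead $B$ is allowed to act degenerately, then $\rho$ lives on a proper subspace of $\Hsp$ and the separation property, which compares two non-degenerate representations on the \emph{same} Hilbert space, no longer applies to the pair $(\pi,\rho)$.) Moreover, the single datum $\pi(r)\in M(B)$ with $\pi(r)B$ and $B\pi(r)$ dense in $B$ cannot be bootstrapped to $\pi(a)B\subseteq B$ for all $a\in A$: condition (2) gives the density $\lbrack\pi(A)B\rbrack\supseteq\pi(r)B$ essentially for free, but the containment $\pi(a)b\in B$ is the real content of the theorem and requires exploiting the affiliation $(\id\otimes\pi)T$ to $C\otimes B$ together with the separation hypothesis in an essentially different and more involved way than ``$\pi(r)$ multiplies $B$ densely on both sides''. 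Likewise, the claim that $(\id\otimes\rho)T$ and $(\id\otimes\pi)T$ have the same $z$-transform ``because $\rho$ and $\pi$ agree on the ideal generated by $r$'' is unsupported: nothing in the construction guarantees they agree anywhere unless $\rho=\pi$ from the outset. In short, either cite \cite[Theorem 4.2 and Remark 4.4]{Wor95} for the lemma as stated, exactly as the paper does, or reproduce Woronowicz's actual proof; the sketch as written does not substitute for it.
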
 

Assume now that $Y$ is a locally compact Hausdorff space. Note that if $A$ is a $C_0(Y)$-algebra and $T\eta A$, then we can make sense of $T_y \eta A_y$ since the localisation map is an element of $\Mor(A,A_y)$. Extensions of morphisms defined on generators can then be created from local information as follows. 

\begin{Prop}\label{PropGen} Let $A,B$ be $C_0(Y)$-algebras. Let $C$ be a finite-dimensional C$^*$-algebra, and let $T\in (C\otimes A)^{\eta}$ and $S \in (C\otimes B)^{\eta}$. Suppose $A$ is generated by $T$, and suppose that for all $y\in Y$ there exists $\varphi_{y}\in \Mor(A_{y},B_{y})$ such that $(\id\otimes \varphi_y)T_y=S_{y}$. Then there exists a unique $\varphi\in \Mor(A,B)$ such that $(\id\otimes \varphi)(T)=S$.
\end{Prop}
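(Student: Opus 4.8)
The plan is to build $\varphi$ fiberwise and then verify that the family $(\varphi_y)_y$ glues to a genuine morphism $A\to M(B)$. First I would recall that since $A,B$ are $C_0(Y)$-algebras, a $C_0(Y)$-linear element of $\Mor(A,B)$ is the same thing as a compatible family of $\varphi_y\in\Mor(A_y,B_y)$; so the real content is (i) uniqueness, (ii) that the candidate $\varphi$ lands in $M(B)$ and is nondegenerate, and (iii) that it is actually $C_0(Y)$-linear, i.e. respects the localisation structure. Uniqueness is immediate from Lemma \ref{LemSepRep}: if $\varphi,\varphi'$ both satisfy $(\id\otimes\varphi)(T)=S=(\id\otimes\varphi')(T)$, then since $A$ is generated by $T$, $T$ separates representations of $A$, and composing with any faithful representation of $M(B)$ forces $\varphi=\varphi'$. (Alternatively, uniqueness is already local: $\varphi_y$ is determined by $(\id\otimes\varphi_y)T_y=S_y$ together with generation of $A_y$ — but generation of $A_y$ is not hypothesized, so I would argue uniqueness globally via Lemma \ref{LemSepRep}.)

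For existence, the strategy is to apply the generation property of $A$ by $T$ to a single well-chosen representation. Concretely, pick a faithful nondegenerate representation $\sigma$ of $B$ on a Hilbert space $\Hsp$, so $M(B)\subseteq B(\Hsp)$. Using the family $(\varphi_y)_y$ I would first assemble a (possibly only algebraically defined) $^*$-homomorphism on a dense subalgebra — for instance on the Pedersen ideal or on $C_c$-type elements — by declaring $\varphi$ on $a$ to be the element of $M(B)$ whose localisations are $\varphi_y(a_y)$; the point is that $\sup_y\|\varphi_y(a_y)\|=\sup_y\|a_y\|=\|a\|$ by \eqref{EqNormSup} (using that each $\varphi_y$, being a $^*$-homomorphism, is contractive, and actually the image of $T$ forces it to be isometric-compatible in the relevant sense), so the assignment $a\mapsto (\varphi_y(a_y))_y$ extends to a contractive $^*$-homomorphism $\varphi\colon A\to \prod_y^{\,b} M(B_y)$. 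The nontrivial point is that the target can be replaced by $M(B)$: since $B$ is a continuous field over $Y$ and the $\varphi_y$ are compatible, the section $(\varphi_y(a_y))_y$ is a genuine multiplier of $B$ — this is where I would invoke that $C_0(Y)$ sits centrally in $M(B)$ and that a bounded, continuous field of multipliers defines a multiplier, together with nondegeneracy.

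The cleanest route, and the one I expect the authors take, avoids this hands-on gluing and instead uses Lemma \ref{LemCritWor}/generation directly. Since $C$ is finite-dimensional, $C\otimes B$ is again a $C_0(Y)$-algebra with fibers $C\otimes B_y$, and an affiliated operator on $C\otimes B$ is determined by its localisations. Define $\varphi$ first as a $^*$-homomorphism into $M(B)$ by the fiberwise recipe above (well-defined and contractive by \eqref{EqNormSup}); the key verification is then that $(\id\otimes\varphi)(T)=S$, which is checked fiberwise: $((\id\otimes\varphi)(T))_y=(\id\otimes\varphi_y)(T_y)=S_y$ for all $y$, and an affiliated operator on a continuous field is determined by its fibers (via the $z$-transform, which lies in $M(C\otimes B)$ and is likewise determined by its localisations). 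Finally, to see $\varphi$ is nondegenerate, i.e. $\varphi\in\Mor(A,B)$: take $r\in A$ as in the hypothesis "(2)" implicit in generation (generation of $A$ by $T$ gives, for the particular representation coming from a faithful rep of $B$ and the subalgebra $B$ itself, that $\varphi$ is a morphism $A\to M(B)$ precisely because $(\id\otimes\varphi)(T)=S\in(C\otimes B)^\eta$ by construction) — so nondegeneracy is built into the definition of "generated by".

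The main obstacle is the passage from the compatible fiberwise data $(\varphi_y)$ to an honest element of $\Mor(A,B)$, specifically showing the glued map is nondegenerate and $C_0(Y)$-linear rather than merely a bounded $^*$-homomorphism into $\prod_y M(B_y)$. I would handle this by exploiting that $A$ is generated by $T$: once $\varphi\colon A\to M(B)$ is a $^*$-homomorphism with $(\id\otimes\varphi)(T)=S$ and $S\in(C\otimes B)^\eta$, the generation property applied with $\Hsp$ a faithful $B$-representation space and the subalgebra $B\subseteq B(\Hsp)$ immediately yields $\varphi\in\Mor(A,B)$. Everything else — contractivity, the identity $(\id\otimes\varphi)(T)=S$, and uniqueness — reduces to the fiber statements already granted plus \eqref{EqNormSup} and Lemma \ref{LemSepRep}.
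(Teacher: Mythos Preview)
You have correctly isolated the main difficulty—passing from the fiberwise $\varphi_y$ to a global morphism—but the resolution you propose is circular. You want to invoke generation ``once $\varphi\colon A\to M(B)$ is a $^*$-homomorphism with $(\id\otimes\varphi)(T)=S$''; but producing \emph{any} $^*$-homomorphism $A\to M(B)$ from the family $(\varphi_y)$ is precisely the missing step. The assignment $a\mapsto(\varphi_y(a_y))_y$ lands only in $\prod_y^{\,b} M(B_y)$, and nothing guarantees that this bounded section is a multiplier of $B$: already for $A=B=C_0(Y)$ with $\varphi_y=\id$ this would say every bounded function on $Y$ is continuous. (You also write ``since $B$ is a continuous field'', but only the $C_0(Y)$-algebra hypothesis is given, and even continuity of the field would not furnish the pointwise continuity of $y\mapsto \varphi_y(a_y)$ you need.) Nor can you feed this map into the generation hypothesis: the $^*$-homomorphism $A\to \prod_y^{\,b} M(B_y)$ is not nondegenerate (same example, since $C_0(Y)\cdot\ell^\infty(Y)\subseteq c_0(Y)$), so after representing the product on a Hilbert space you do not obtain a representation of $A$ in the paper's sense, and the definition of ``generated by $T$'' does not apply.

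The paper's proof avoids this by exploiting separability. One chooses a \emph{countable} $Y_0\subseteq Y$ such that the natural maps $i_A:A\hookrightarrow M(\oplus_{y\in Y_0}A_y)$ and $i_B:B\hookrightarrow M(\oplus_{y\in Y_0}B_y)$ are faithful (possible by \eqref{EqNormSup}). Over a countable index set, $\phi=\oplus_{y\in Y_0}\varphi_y\in\Mor(\oplus A_y,\oplus B_y)$ is genuinely nondegenerate, as are $i_A$ and $i_B$; one then checks on $z$-transforms (using that the finite-dimensional $C$ commutes with the direct product) that $(\id\otimes\phi\circ i_A)T=(\id\otimes i_B)S$. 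Now \cite[Proposition~4.5]{Wor95}—the precise form of the generation property you are reaching for—produces the factorisation $\phi\circ i_A=i_B\circ\varphi$ for some $\varphi\in\Mor(A,B)$. Uniqueness is, as you say, via Lemma~\ref{LemSepRep}.
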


Here we use the obvious identification $(C\otimes A)_y \cong C\otimes A_y$, viewing $C\otimes A$ as a $C_0(Y)$-algebra in the natural way.

\begin{proof} Since $A,B$ are separable, we can find, by the identity \eqref{EqNormSup} under Definition \ref{DefFaithField}, an at most countable subset $Y_0\subseteq Y$ such that the natural non-degenerate maps \[A \overset{i_A}{\hookrightarrow} M(\oplus_{y\in Y_0}{A_{y}}),\quad B \overset{i_B}{\hookrightarrow} M(\oplus_{y\in Y_0}{B_y})\] are faithful. We then want to show there exists a unique arrow $\varphi$ making the following diagram commute,

\begin{displaymath}
\xymatrix{A\ar@{^{(}->}[r]^-{i_{A}} \ar@{-->}[d]^{\varphi} & M(\oplus_{y\in Y_0}{A_{y}})\ar@{}[r]|{\supseteq}\ar[d]^{\phi=\oplus_{y\in Y_0}{\varphi_{y}}} & \prod_{y\in Y_0}^b{M(A_{y})}\\M(B)\ar@{^{(}->}[r]^-{i_{B}} & M(\oplus_{y\in Y_0}{B_{y}}) \ar@{}[r]|{\supseteq}& \prod_{y\in Y_0}^b{M(B_{y})}, }
\end{displaymath}
where $\prod^b$ denotes the bounded direct product of C$^*$-algebras. Also, note that the map $\phi: \oplus_{y\in Y_0}A_y \rightarrow M(\oplus_{y\in Y_0} B_y)$ is non-degenerate, hence extends uniquely to $M(\oplus_{y\in Y_0}A_y)$. 

By direct computation, and using that $C$ passes through direct products by finite-dimensionality, we have \[(\id\otimes \phi\circ i_{A})z_{T}=(\id\otimes \phi)\prod_{y\in Y_0}{z_{T_{y}}}=\prod_{y\in Y_0}{z_{S_{y}}}=(\id\otimes i_{B})z_{S},\] so that $(\id\otimes \phi\circ i_A)T = (\id\otimes i_B)S$. Hence we can apply \cite[Proposition 4.5]{Wor95} to conclude that there exists $\varphi$ as in the statement of the proposition. Its uniqueness follows from Lemma \ref{LemSepRep}.
\end{proof}

In the next lemma, we will need the disintegration of a representation of a $C_0(Y)$-algebra along $Y$, see \cite[Chapter 8]{Dix77}. 
	
\begin{Lem}\label{LemSpecLoc} Assume $S\in C_0(Y)$ separates points of $Y$, so $Y \cong \Spec(S)$. Let $A$ be a $C_0(Y)$-algebra. Let $C$ be a finite-dimensional C$^*$-algebra, and $T\in (C\otimes A)^{\eta}$. Then $(S,T) \eta (\C\oplus C)\otimes A$ separates representations of $A$ if and only if $T_y\in (C\otimes A_y)^{\eta}$ separates representations of $A_y$ for each $y$. 
\end{Lem}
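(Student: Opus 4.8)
The plan is to prove the two implications separately. The forward direction --- if $(S,T)$ separates representations of $A$, then each $T_y$ separates representations of $A_y$ --- is a soft pull-back argument, while the converse requires disintegrating representations of $A$ along $Y$.

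For the forward direction, fix $y\in Y$ and let $\pi,\rho$ be representations of $A_y$ on a Hilbert space $\Hsp$ with $(\id\otimes\pi)T_y=(\id\otimes\rho)T_y$. Writing $q_y\in\Mor(A,A_y)$ for the (surjective) localisation morphism, set $\pi'=\pi\circ q_y$ and $\rho'=\rho\circ q_y$; these are representations of $A$ on $\Hsp$, non-degenerate because $q_y$ is surjective. Since $q_y$ sends the central element $S\in C_0(Y)\subseteq M(A)$ to the scalar $S(y)\cdot 1\in M(A_y)$, we have $\pi'(S)=S(y)=\rho'(S)$, while $(\id\otimes\pi')T=(\id\otimes\pi)\big((\id\otimes q_y)T\big)=(\id\otimes\pi)T_y$ and likewise for $\rho'$, so that $(\id\otimes\pi')T=(\id\otimes\rho')T$. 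Hence $(\id\otimes\pi')(S,T)=(\id\otimes\rho')(S,T)$, and since $(S,T)$ separates representations of $A$ we conclude $\pi'=\rho'$, i.e.\ $\pi\circ q_y=\rho\circ q_y$, whence $\pi=\rho$ by surjectivity of $q_y$.

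For the converse, assume each $T_y$ separates representations of $A_y$, and let $\pi,\rho$ be representations of $A$ on a (separable) Hilbert space $\Hsp$ with $(\id\otimes\pi)(S,T)=(\id\otimes\rho)(S,T)$, so in particular $\pi(S)=\rho(S)$ and $(\id\otimes\pi)T=(\id\otimes\rho)T$. Because $S$ generates $C_0(Y)$ as a C$^*$-algebra --- which is what the identification $Y\cong\Spec(S)$ expresses --- and $\pi,\rho$ restrict to non-degenerate representations of $C_0(Y)$ (as $C_0(Y)A=A$), the equality $\pi(S)=\rho(S)$ forces $\pi(f)=\rho(f)$ for all $f\in C_0(Y)$. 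Thus $\pi$ and $\rho$ give rise to one and the same abelian von Neumann algebra $\mathcal{Z}=\pi(C_0(Y))''=\rho(C_0(Y))''$, contained in $\pi(A)'\cap\rho(A)'$ since $C_0(Y)$ is central in $M(A)$. Disintegrating $\Hsp$ along $\mathcal{Z}$ as in \cite[Chapter 8]{Dix77} produces a measure $\mu$ on $Y$ and a measurable field $y\mapsto\Hsp_y$ with $\Hsp\cong\int_Y^{\oplus}\Hsp_y\,\rd\mu(y)$ such that, simultaneously, $\pi=\int_Y^{\oplus}\pi_y\,\rd\mu(y)$ and $\rho=\int_Y^{\oplus}\rho_y\,\rd\mu(y)$, where for $\mu$-almost every $y$ the fibre representations $\pi_y,\rho_y$ factor through the localisation $A_y$.

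It remains to bring the identity $(\id\otimes\pi)T=(\id\otimes\rho)T$ down to the fibres. Since $C$ is finite-dimensional, $\id\otimes\pi=\int_Y^{\oplus}(\id\otimes\pi_y)\,\rd\mu(y)$ as representations of $C\otimes A$, and the multiplier $z_T\in M(C\otimes A)$ disintegrates compatibly with its fibres $(z_T)_y=z_{T_y}$ (functoriality of the $z$-transform under $\id\otimes q_y$); hence $(\id\otimes\pi)(z_T)=\int_Y^{\oplus}z_{(\id\otimes\pi_y)T_y}\,\rd\mu(y)$, and likewise for $\rho$. Now $(\id\otimes\pi)T=(\id\otimes\rho)T$ means $(\id\otimes\pi)(z_T)=(\id\otimes\rho)(z_T)$, so comparing the two direct-integral decompositions gives $z_{(\id\otimes\pi_y)T_y}=z_{(\id\otimes\rho_y)T_y}$, hence $(\id\otimes\pi_y)T_y=(\id\otimes\rho_y)T_y$, for $\mu$-almost every $y$; for each such $y$ the hypothesis on $T_y$ yields $\pi_y=\rho_y$, and therefore $\pi=\rho$. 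I expect the main obstacle to be this last step: one must justify that the disintegration along $\mathcal{Z}$ is compatible both with the quotient maps $A\to A_y$ (so the $\pi_y$ are genuinely representations of $A_y$) and with the functional calculus and multiplier action needed to identify the fibres of $(\id\otimes\pi)(z_T)$. Finite-dimensionality of $C$ is exactly what lets the tensor factor pass through the direct integral, and separability of $A$ and $\Hsp$ is what makes the disintegration available; these are standard facts in Dixmier's theory, so the argument is largely an assembly of known ingredients.
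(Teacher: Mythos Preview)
Your proof is correct and follows essentially the same approach as the paper's: the easy direction lifts representations of $A_y$ to $A$ via the localisation map, and the harder direction disintegrates $\pi$ and $\rho$ along the common central algebra $\pi(C_0(Y))''=\rho(C_0(Y))''$ to reduce to the fibrewise hypothesis. Your treatment is in fact more explicit than the paper's --- you spell out why $\pi(S)=\rho(S)$ forces $\pi_{\mid C_0(Y)}=\rho_{\mid C_0(Y)}$, and you pass to fibres via the bounded $z$-transform rather than invoking the affiliated operator directly --- but these are elaborations, not departures.
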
 
\begin{proof} Assume first that $T_y$ separates representations of $A_y$ for each $y$. Let $\pi,\rho$ be two representations of $A$ on a Hilbert space $\Hsp$, and assume that $\pi(S) = \rho(S)$ and $(\id\otimes \pi)T = (\id\otimes \rho)T$. Then $\pi(f) = \rho(f)$ for all $f\in C_0(Y)$. Hence we have disintegrations of the form \[\Hsp = \int_{Y}^{\oplus} \Hsp_y \rd \mu(y), \quad \pi =  \int_{Y}^{\oplus} \pi_y \rd \mu(y),\quad \rho = \int_{Y}^{\oplus} \rho_y \rd \mu(y)\] for some Borel measure $\mu$ on $Y$, where $\pi_y$ and $\rho_y$ are representations of $A$ which factor over $A_y$.

As $(\id\otimes \pi)T = (\id\otimes\rho)T$, it follows that $(\id\otimes \pi_y)T_{y} = (\id\otimes \rho_y)T_{y}$ for almost all $y$. As $T_{y}$ separates the representations of $A_y$, this implies, by Lemma \ref{LemSepRep}, $\pi_y = \rho_y$ for almost all $y$, and hence $\pi = \rho$.

Conversely, if $(S,T)$ separates representations of $A$ and $y\in Y$, then any two representations $\pi,\rho$ of $A_y$ with $(\id\otimes \pi)T_y = (\id\otimes \rho)T_y$ lift to representations of $A$ which are equal on $(S,T)$, and hence $\pi = \rho$.  
\end{proof} 

Let us now return to $C_0(G)$ and the operator $T$ from Proposition \ref{PropExT}. 

\begin{Theorem}\label{TheoExT} The couple \[(Q,T)\eta (\C\oplus M_2(\C))\otimes C_0(G)\] is a quantum generating family for $C_0(G)$. 
\end{Theorem}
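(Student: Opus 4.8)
The plan is to verify the two hypotheses of Lemma \ref{LemCritWor} applied to $A = C_0(G)$, $C = \C\oplus M_2(\C)$, and the affiliated operator $(Q,T)$, and for most of the work to reduce everything to the fibers via the localisation technology already developed (Lemma \ref{LemSpecLoc} and Proposition \ref{PropGen}). So the two things I need are: (i) $(Q,T)$ separates representations of $C_0(G)$, and (ii) there is an element $r\in C_0(G)$ witnessing the density condition of Lemma \ref{LemCritWor}(2).

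For (i), I would invoke Lemma \ref{LemSpecLoc} with $Y = (0,1\rbrack$, $S = Q$ (which separates points of $(0,1\rbrack$), $C = M_2(\C)$, and $T$ the operator of Proposition \ref{PropExT}, whose fibers $T_q \eta M_2(C_0(G_q))$ were identified at the end of Section \ref{SecFieldQTM}. By that lemma it suffices to show $T_q$ separates representations of $C_0(G_q)$ for each $0<q\leq 1$. This is a fiberwise computation: for $0<q<1$, $C_0(G_q) \cong c_0\textrm{-}\!\oplus_n B(\C^n)$ by Lemma \ref{LemIrRepq}, the irreducible representations are the projections onto the summands $B(\C^n)$, and on the $n$-th summand $T_q$ becomes an explicit finite matrix built from $A_q$ and $N_q$ in the concrete model of Lemma \ref{LemIrRepq}; one checks that the entries (the values of $A$, $N$ on $H_q\cap\{Q=q\}$ and the shift $S$) already generate $B(\C^n)$, so no two distinct irreducibles agree on $T_q$. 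For $q=1$, $C_0(G_1)\cong C_0(\C\times\R_0^+)$ by Lemma \ref{LemIrRep1}, the irreducibles are evaluations at points $(n,a)$, and $T_1 = \begin{pmatrix} A & N \\ 0 & A^{-1}\end{pmatrix}$ evaluated at a point of $H_1$ together with the coordinate $Z$-type data recovers $(n,a)$ up to the circle ambiguity which is exactly killed by the off-diagonal entry $N$ (whose argument encodes the phase of $n$), so $T_1$ separates points. Alternatively, and perhaps more cleanly, I would note that $Q, A, N$ (hence $T$) together topologically generate $C_0(G)$ as a C$^*$-algebra in the ordinary sense — since $C_c(H)$ is generated by $Q,\Omega,A$ and the partial isometry part of $N$ recovers $\mathscr{N}$ — and invoke that an ordinary generating family of affiliated operators separates representations.

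For (ii), the natural candidate is $r = (1+A+A^{-1})^{-1}$ or, more robustly, a strictly positive element of $C_0(G)$ lying in $C_c(G)$-adically controlled form; concretely I would take $r$ to be a function of $Q$ and $A$ (say $r = (2+Q+Q^{-1})^{-1}(1+A^2+A^{-2})^{-1}$, which lies in $C_0(H)\subseteq C_0(G)$ and is strictly positive there, hence strictly positive in $C_0(G)$). The point is that if $\pi$ is a representation of $C_0(G)$ on $\Hsp$ and $B\subseteq B(\Hsp)$ a C$^*$-subalgebra with $(\id\otimes\pi)(Q,T)\eta (\C\oplus M_2(\C))\otimes B$, then $\pi(Q)\in M(B)$ and the $z$-transform of $\pi(T)$ lies in $M_2(M(B))$; extracting the bounded continuous functions of $\pi(A^{\pm1}), \pi(Q)$ from the entries and polar parts of $z_{\pi(T)}$ shows $\pi(r)\in M(B)$, and since $r$ is strictly positive in $C_0(G)$ its image $\pi(r)$ is a strictly positive element of $\overline{\pi(C_0(G))}$, and one must check $\pi(r)B$ is dense in $B$ — this is where a little care is needed, and I would handle it by noting that $(\id\otimes\pi)(Q,T)\eta(\C\oplus M_2(\C))\otimes B$ forces $\overline{\pi(C_0(G))} \subseteq M(B)$ with $\pi(C_0(G))B$ dense in $B$ (this is essentially the content of the separation argument combined with the fact that $Q,A,N$ generate), so $\pi(r)B$ is dense.

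The main obstacle I anticipate is not (i) — that reduces cleanly to finite-dimensional fiber computations — but rather the verification in (ii) that the chosen $r$ does the job at $q=1$, where $C_0(G_1)$ is non-unital and the representation theory is continuous rather than discrete: one needs $\pi(r)B$ and $B\pi(r)$ dense in $B$, which amounts to showing that the mere affiliation of $(\id\otimes\pi)(Q,T)$ to $(\C\oplus M_2(\C))\otimes B$ already pins down $\overline{\pi(C_0(G))}$ inside $M(B)$ as a nondegenerately-acting subalgebra. I would resolve this by arguing that $z_{\pi(T)}\in M_2(M(B))$ together with $\pi(Q)\in M(B)$ lets one reconstruct, via continuous functional calculus and the identities $AN=QNA$, $[N,N^*]=(Q-Q^{-1})(A^2-A^{-2})$ valid on the core, bounded functions generating all of $\pi(C_0(G))$ inside $M(B)$; nondegeneracy of the resulting inclusion then yields the density of $\pi(r)B$ in $B$. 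Once both hypotheses of Lemma \ref{LemCritWor} are checked, the theorem follows.
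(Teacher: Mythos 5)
Your overall skeleton --- verify the two conditions of Lemma \ref{LemCritWor}, reducing the separation condition to the fibers via Lemma \ref{LemSpecLoc} with $S=Q$ --- is exactly the paper's; the paper simply quotes \cite[Section 4, Example 6]{Wor95} both for the fact that each $T_q$ separates representations of $C_0(G_q)$ and for the existence of the element $r$. Your fiberwise treatment of (i) is fine in outline (it amounts to redoing Woronowicz's example), but your ``alternative'' shortcut is not: bounded functions of affiliated elements live only in $M(C_0(G))$, $T$ is not normal so ``bounded functions of $T$'' beyond $z_T$ and functions of $T^*T$ are not even well defined, and ``ordinary topological generation implies separation'' is not a fact you may invoke; the fiberwise check is what is needed.

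The genuine gap is in (ii). First, your candidate $r=(2+Q+Q^{-1})^{-1}(1+A^2+A^{-2})^{-1}$ is \emph{not} an element of $C_0(H)$, hence not of $C_0(G)$, whereas Lemma \ref{LemCritWor} explicitly requires $r\in A$. Indeed, no nonzero continuous function of $Q$ and $A$ alone vanishes at infinity on $H$: for fixed $q<1$ the points $(q,q^{n},1)\in H$ (with $n$ odd) leave every compact subset of $H$ as $n\to\infty$, while your $r$ is a nonzero constant along them; equivalently, in the model of Lemma \ref{LemIrRepq} the restriction $r_q$ has block norms bounded below, so $r_q\in M(C_0(G_q))\setminus C_0(G_q)$. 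Any admissible $r$ must decay in the $\Omega$-direction (i.e.\ involve $t+t^{-1}$, equivalently $N^*N$), which is exactly what Woronowicz's choice does. Second, your justification of the density of $\pi(r)B$ and $B\pi(r)$ is circular: you assert that affiliation of $(\id\otimes\pi)(Q,T)$ to $(\C\oplus M_2(\C))\otimes B$ already forces $\pi(C_0(G))B$ to be dense in $B$, but that is precisely (a consequence of) the generation statement being proved. The honest argument --- the one the paper imports wholesale with ``exactly the same argument as in \cite[Section 4, Example 6]{Wor95}'' --- uses the full strength of affiliation, namely that $\bigl(1+(\id\otimes\pi)(T)^*(\id\otimes\pi)(T)\bigr)^{-1/2}M_2(B)$ is dense in $M_2(B)$, together with the explicit triangular form of $T$ and the commutation relations, to manufacture a concrete decaying function $r$ of the family for which $\pi(r)B$ and $B\pi(r)$ are dense. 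That computation is the actual content of condition 2 of Lemma \ref{LemCritWor} and is missing from your proposal.
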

 \begin{proof}
It is immediate that the localisation of $T$ coincides with $T_q$ as defined beneath Proposition \ref{PropExT}. As the $T_q$ separate representations of $C_0(G_q)$ for each $q$ by \cite[Section 4, Example 6]{Wor95}, it follows by Lemma \ref{LemSpecLoc} that $(Q,T)$ separates representations of $C_0(G)$. 

Exactly the same argument as in \cite[Section 4, Example 6]{Wor95} shows that there exists an element $r\in A$ satisfying Condition 2 in Lemma \ref{LemCritWor}. It follows that $(Q,T)$ generates $C_0(G)$. 
\end{proof} 

\subsection{A field of comultiplications}\label{SecFieldCom}

Again we begin this section with some general preliminaries.

\begin{Def}(\cite[D\'{e}finition 3.19]{Bla96}) Let $A$ and $B$ be $C_0(Y)$-algebras over some locally compact space $Y$, and assume they both admit faithful $C_0(Y)$-representations on respective Hilbert $C_0(Y)$-modules $\mathscr{F}$ and $\mathscr{E}$. We define the \emph{$C_0(Y)$-tensor product} $A\underset{C_0(Y)}{\otimes} B$ to be the image of the natural representation of $A\underset{\mathrm{max}}{\otimes} B$ on the interior tensor product  $\mathscr{F}\underset{C_0(Y)}{\otimes} \mathscr{E}$. 
\end{Def} 

One can show \cite[Proposition 3.20]{Bla96} that the $C_0(Y)$-tensor product is independent (up to canonical isomorphism) of the choice of $\mathscr{F}$ and $\mathscr{E}$. It is also immediate from the definition that $A\underset{C_0(Y)}{\otimes} B$ is then again a $C_0(Y)$-algebra admitting a faithful $C_0(Y)$-representation, and that $\underset{C_0(Y)}{\otimes}$ satisfies associati-vity in a natural way. 

When one of the $C_0(Y)$-algebras is nuclear (as a C$^*$-algebra), one can say a bit more. Note that by \cite[Proposition 3.23]{Bla96} a $C_0(Y)$-algebra $A$ is nuclear if and only if all fibers $A_y$ are nuclear. 

\begin{Prop}(\cite[Corollaire 3.17, Proposition 3.25 and Corollaire 3.26]{Bla96})\label{PropTensCont} Assume $A,B$ are continuous fields of C$^*$-algebras over $Y$, and assume $A$ is nuclear. Then $A\underset{C_0(Y)}{\otimes} B$ is the universal C$^*$-envelope of the algebraic $C_0(Y)$-balanced tensor product $^*$-algebra $A\overset{\alg}{\underset{C_0(Y)}{\otimes}} B$, and $A\underset{C_0(Y)}{\otimes} B$ is a continuous field of C$^*$-algebras over $Y$ with \[(A\underset{C_0(Y)}{\otimes} B)_y = A_y\otimes B_y,\qquad \forall y\in Y.\]
\end{Prop}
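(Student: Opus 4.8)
The plan is to reduce the statement to two ingredients already available: nuclearity of $A$, which makes the maximal and minimal C$^*$-norms on a tensor product with $A$ coincide (and, by \cite[Proposition 3.23]{Bla96}, passes to the fibres $A_y$), and Theorem~\ref{TheoBlanFF} together with the norm formula \eqref{EqNormSup}, which detect continuous fields through fields of faithful representations.

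First I would build the candidate universal object. Write $A\overset{\alg}{\underset{C_0(Y)}{\otimes}}B$ for the $C_0(Y)$-balanced algebraic tensor product $^*$-algebra, i.e.\ the quotient of the unbalanced algebraic tensor product by the $^*$-ideal spanned by the elements $fa\otimes b-a\otimes fb$ with $f\in C_0(Y)$. Every C$^*$-seminorm on it pulls back to a C$^*$-seminorm on the unbalanced algebraic tensor product, hence is dominated by $\|\cdot\|_{\mathrm{max}}$; since $A$ is nuclear this equals $\|\cdot\|_{\mathrm{min}}$ and is in particular finite, so the supremum of all C$^*$-seminorms is again a C$^*$-seminorm. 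Separation-completion yields the universal C$^*$-envelope, which I denote $E$; it carries $C_0(Y)$ in its centre, hence is again a $C_0(Y)$-algebra. Because $A\underset{C_0(Y)}{\otimes}B$ is by definition the image of $A\underset{\mathrm{max}}{\otimes}B$ on an interior tensor product $\mathscr F\underset{C_0(Y)}{\otimes}\mathscr E$, and this representation kills the balancing ideal, $A\underset{C_0(Y)}{\otimes}B$ is a C$^*$-completion of the same $^*$-algebra; so there is a canonical surjection $q\colon E\twoheadrightarrow A\underset{C_0(Y)}{\otimes}B$ of $C_0(Y)$-algebras.

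Next I would compute the fibres of both sides and show $q$ is a fibrewise isomorphism. On the universal side, a representation of $E$ annihilating the ideal $I_y$ is exactly a commuting pair of representations of $A_y$ and $B_y$ (using Tietze extension and the description of $I_y$), so universality gives $E_y\cong A_y\underset{\mathrm{max}}{\otimes}B_y=A_y\otimes B_y$ by nuclearity of $A_y$. On the module side, the interior tensor product localises as $(\mathscr F\underset{C_0(Y)}{\otimes}\mathscr E)_y=\mathscr F_y\otimes\mathscr E_y$, and the underlying $C_0(Y)$-representation — take the one coming from faithful $C_0(Y)$-representations of $A$ and $B$, which exist by Theorem~\ref{TheoBlanFF} — localises to $\pi^A_y\otimes\pi^B_y$; since $\pi^A_y$ and $\pi^B_y$ are faithful and $A_y$ is nuclear, $\pi^A_y\otimes\pi^B_y$ is faithful on $A_y\otimes B_y$. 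Hence $(A\underset{C_0(Y)}{\otimes}B)_y\cong A_y\otimes B_y$ and $q_y$ is an isomorphism for every $y$.

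Finally, since $q$ is a surjection of $C_0(Y)$-algebras that is an isomorphism on each fibre, the formula $\|x\|=\sup_y\|x_y\|$ from \eqref{EqNormSup} forces $q$ to be isometric, hence a $^*$-isomorphism: this gives the universal-envelope assertion. For the continuity assertion, the field representation $\pi^A\otimes\pi^B$ is, by the fibre computation, a field of faithful representations of $A\underset{C_0(Y)}{\otimes}B$, so Theorem~\ref{TheoBlanFF} shows it is a continuous field, with fibres $A_y\otimes B_y$. I expect the main obstacle to be the fibre identification $(A\underset{C_0(Y)}{\otimes}B)_y\cong A_y\otimes B_y$: one must verify that localisation at $y$ is compatible both with the module-theoretic interior tensor product and with the universal construction, and it is precisely here that nuclearity of $A$ is essential, both to make the tensor product of the fibres unambiguous and to keep the localised field representation faithful. (In the paper this proposition is simply quoted from \cite{Bla96}.)
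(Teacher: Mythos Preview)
The paper does not prove this proposition at all: it is stated with a bare citation to \cite[Corollaire 3.17, Proposition 3.25 and Corollaire 3.26]{Bla96}, and you correctly note this at the end of your proposal. So there is no ``paper's own proof'' to compare against beyond Blanchard's original arguments.

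Your sketch is a faithful reconstruction of the standard line of reasoning and is essentially correct. One step deserves to be spelled out more carefully: the sentence ``Hence $(A\underset{C_0(Y)}{\otimes}B)_y\cong A_y\otimes B_y$'' does not follow \emph{directly} from the faithfulness of $\pi^A_y\otimes\pi^B_y$ on $A_y\otimes B_y$. What you actually have is a chain of surjections
\[
E_y \;\overset{q_y}{\longrightarrow}\; (A\underset{C_0(Y)}{\otimes}B)_y \;\longrightarrow\; A_y\otimes B_y,
\]
the second arrow coming from the localised field representation (which factors through $A_y\otimes B_y$ precisely because $\pi^A_y\otimes\pi^B_y$ is faithful). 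Since you have already identified $E_y\cong A_y\otimes B_y$ via nuclearity, the composite is an isomorphism, and hence both arrows are isomorphisms. Only then do you know that the localised representations $\rho_y$ are faithful, which is what feeds into Theorem~\ref{TheoBlanFF} for the continuity claim. As written, your paragraph risks looking circular (using faithfulness of $\rho_y$ to get the fibre identification, and the fibre identification to get faithfulness of $\rho_y$); the chain above breaks that circle.
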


Let us now return to $C_0(G)$. Note that $C_0(G)$, having a faithful conditional expectation onto $C_0(H)$, is nuclear. Hence $C_0(G) \underset{C_0((0,1\rbrack)}{\otimes} C_0(G)$ is a continuous field of C$^*$-algebras with fibers $C_0(G_q)\otimes C_0(G_q)$. 

\begin{Def} We define \[C_0(G\underset{(0,1\rbrack}{\times} G) = C_0(G) \underset{C_0((0,1\rbrack)}{\otimes} C_0(G).\] 
\end{Def} 

Write $H\underset{(0,1\rbrack}\times H$ for the closed subset \[H\underset{(0,1\rbrack}{\times}  H = \{(q,t,a,q,t',a')\mid (q,t,a),(q,t',a') \in H_q\}\subseteq H\times H.\] Then one has an identification \[C_0(H\underset{(0,1\rbrack}\times H) \cong C_0(H)\underset{C_0((0,1\rbrack)}{\otimes} C_0(H),\] leading to an embedding \[C_0(H\underset{(0,1\rbrack}\times H) \hookrightarrow C_0(G\underset{(0,1\rbrack}{\times} G).\] 

In fact, $C_0(G\underset{(0,1\rbrack}{\times}  G)$ has an action of $S^1\times S^1$ by \[\beta_{z,w}(x\otimes y) = \beta_z(x)\otimes \beta_w(y),\quad x,y\in C_0(G),\] with $C_0(H\underset{(0,1\rbrack}{\times} H)$ as its fixed point algebra. One may also see $C_0(G\underset{(0,1\rbrack}{\times} G)$ as a crossed product by a partial action of the group $\Z^2$ in the sense of \cite{Mcc95}. The $(m,n)$th-spectral subspace can be identified with \[C_0(G\underset{(0,1\rbrack}{\times}G)_{(m,n)} = (s^m\otimes s^n)C_0(H^{(m)}\underset{(0,1\rbrack}{\times}H^{(n)}),\] where elements in the right hand side are easily defined by approximation with elementary tensors.

\begin{Def} We define $C_c(G\underset{(0,1\rbrack}{\times} G)$ as the ideal in  $C_0(G\underset{(0,1\rbrack}{\times} G)$ generated by  $C_c(H\underset{(0,1\rbrack}{\times} H)$.
\end{Def} 
 
The same argument which was used to prove \eqref{EqComp} shows that \[C_c(G\underset{(0,1\rbrack}{\times} G) = C_0(G\underset{(0,1\rbrack}{\times} G)C_c(H\underset{(0,1\rbrack}{\times} H) = C_c(H\underset{(0,1\rbrack}{\times} H)C_0(G\underset{(0,1\rbrack}{\times} G).\]

If now $f\in C_c(H\underset{(0,1\rbrack}{\times}H)$, we have for example \[(A\otimes |N|)f \in C_0(H\underset{(0,1\rbrack}{\times} H^{(1)}),\quad (|N|\otimes A^{-1})f \in C_0(H^{(1)}\underset{(0,1\rbrack}{\times} H).\]

This allows us to make sense of \[(A\otimes N)f = (1\otimes s)((A\otimes |N|)f),\quad (N\otimes A^{-1})f = (s\otimes 1)((|N|\otimes A^{-1})f)\] inside $C_0(G\underset{(0,1\rbrack}{\times}G)$.  Similarly, operators $A\otimes N^*$ and $N^*\otimes A^{-1}$ can be defined.

\begin{Prop}\label{PropExNDelta} There exists an operator $\Delta(N)$ affiliated with $C_0(G\underset{(0,1\rbrack}{\times} G)$ and with (invariant) core $C_c(G\underset{(0,1\rbrack}{\times} G)$ such that \begin{equation}\label{EqDefDelN} \Delta(N)(fx) = ((A\otimes N+ N\otimes A^{-1})f)x.\end{equation} for all $f\in C_c(H\underset{(0,1\rbrack}{\times} H)$ and $x\in C_0(G\underset{(0,1\rbrack}{\times} G)$. Moreover, $C_c(G\underset{(0,1\rbrack}{\times} G)$ is also a core for $\Delta(N)^*$, and
  \[\Delta(N)^*(fx) = ((A\otimes N^*+N^*\otimes A^{-1})f)x.\]  
\end{Prop}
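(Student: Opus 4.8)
The plan is to mimic exactly the proof of Proposition~\ref{PropExN}, now working over $M_2$-like data but concretely in the $C_0((0,1\rbrack)$-tensor product. First I would verify the algebraic identity: for $f,g \in C_c(H\underset{(0,1\rbrack}{\times} H)$ and $x,y \in C_0(G\underset{(0,1\rbrack}{\times} G)$, a direct computation using the defining relations of the crossed product (Proposition~\ref{PropDefRel}), the commutation rules $AN = QNA$ etc., and the explicit formulas for $|N^k|$, shows that
\[
(gy)^*\bigl((A\otimes N + N\otimes A^{-1})f\bigr)x = \bigl((A\otimes N^* + N^*\otimes A^{-1})g\, y\bigr)^* fx.
\]
Concretely this amounts to checking that the formal adjoint of $A\otimes N + N\otimes A^{-1}$ on the dense subspace is $A\otimes N^* + N^*\otimes A^{-1}$, which reduces to the corresponding statement for $N$ (established in the construction of $N$) together with the fact that $A$, $A^{-1}$ are self-adjoint and affiliated with a commutative core. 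Granting this, the standard argument (as in \cite{BaJ83,NaW92}) produces a well-defined semiregular operator $\Delta(N)$ with invariant core $C_c(G\underset{(0,1\rbrack}{\times} G)$ satisfying \eqref{EqDefDelN}, with $C_c(G\underset{(0,1\rbrack}{\times} G)\subseteq \mathscr{D}(\Delta(N)^*)$ and the stated formula for $\Delta(N)^*$ on that domain; one should check invariance of the core, which follows from the local-unit property of $C_c(G\underset{(0,1\rbrack}{\times} G)$ (the analogue of \eqref{EqComp} noted in the excerpt) since right multiplication by $C_0(G\underset{(0,1\rbrack}{\times} G)$ preserves $C_c(H\underset{(0,1\rbrack}{\times} H)C_0(G\underset{(0,1\rbrack}{\times} G)$.

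Next I would upgrade semiregularity to regularity via Theorem~\ref{TheoKL}. For this I need that every irreducible representation of $C_0(G\underset{(0,1\rbrack}{\times} G)$ has unital image. Since $C_0(G\underset{(0,1\rbrack}{\times} G)$ is a continuous field over $(0,1\rbrack$ with fibers $C_0(G_q)\otimes C_0(G_q)$ (Proposition~\ref{PropTensCont} applied to the nuclear $C_0(G)$), any irreducible representation factors over some fiber; by Lemma~\ref{LemIrRepq} and Lemma~\ref{LemIrRep1}, the fiber at $q<1$ is $c_0$-$\oplus_n B(\C^n)$ and the fiber at $q=1$ is $C_0(\C\times\R_0^+)$ — so for $q<1$ the irreducible representations are the finite-dimensional $B(\C^n)\otimes B(\C^m)$ (unital), while for $q=1$ they are one-dimensional evaluation characters (also unital). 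Hence every irreducible representation of $C_0(G\underset{(0,1\rbrack}{\times} G)$ has unital image, Theorem~\ref{TheoKL} applies, and we conclude both that $\Delta(N)$ is regular (hence affiliated) and — since $C_c(G\underset{(0,1\rbrack}{\times} G)$ is a right-invariant dense domain — that it is a core for $\Delta(N)$; applying the same to $\Delta(N)^*$ gives that $C_c(G\underset{(0,1\rbrack}{\times} G)$ is a core for $\Delta(N)^*$ as well, matching the $N$ case verbatim.

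The main obstacle, such as it is, is purely bookkeeping: verifying the formal-adjoint identity requires handling the four cross-terms $A\otimes N$, $N\otimes A^{-1}$ and their adjoints across the $C_0((0,1\rbrack)$-balanced tensor product, keeping track of which spectral subspaces $(s^m\otimes s^n)C_0(H^{(m)}\underset{(0,1\rbrack}{\times} H^{(n)})$ the various products land in, and confirming that the formal symbols $(A\otimes N+N\otimes A^{-1})f$ and $(A\otimes N^*+N^*\otimes A^{-1})g$ actually lie in $C_0(G\underset{(0,1\rbrack}{\times} G)$ — this is exactly the content of the remarks in the excerpt that $(A\otimes|N|)f \in C_0(H\underset{(0,1\rbrack}{\times} H^{(1)})$ and $(|N|\otimes A^{-1})f \in C_0(H^{(1)}\underset{(0,1\rbrack}{\times} H)$. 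Because the paper has already set up all of this machinery and since the argument is, as stated, "identical to that of Proposition~\ref{PropExN}", I would present the proof tersely: state the formal-adjoint computation, invoke the semiregular-operator construction, and then quote Theorem~\ref{TheoKL} using the finite-dimensional/one-dimensional description of the irreducible representations of the fibers to close the argument.

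\begin{proof} The proof is identical to that of Proposition~\ref{PropExN}. For $f,g\in C_c(H\underset{(0,1\rbrack}{\times} H)$ and $x,y\in C_0(G\underset{(0,1\rbrack}{\times} G)$, a direct computation using the defining relations together with the commutation rules $AN=QNA$, $A N^* = Q^{-1}N^*A$ shows that
\[
(gy)^*\bigl((A\otimes N+N\otimes A^{-1})f\bigr)x = \bigl((A\otimes N^*+N^*\otimes A^{-1})g\,y\bigr)^* fx,
\]
where both expressions are well-defined elements of $C_0(G\underset{(0,1\rbrack}{\times} G)$ by the membership relations noted above. Hence there is a well-defined semiregular operator $\Delta(N)$ with core $C_c(G\underset{(0,1\rbrack}{\times} G)$ satisfying \eqref{EqDefDelN}, with $C_c(G\underset{(0,1\rbrack}{\times} G)\subseteq\mathscr{D}(\Delta(N)^*)$ and the stated formula for $\Delta(N)^*$ there; the core is invariant by the local-unit property of $C_c(G\underset{(0,1\rbrack}{\times} G)$.

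Since $C_0(G\underset{(0,1\rbrack}{\times} G)$ is a continuous field over $(0,1\rbrack$ with fibers $C_0(G_q)\otimes C_0(G_q)$, any irreducible representation factors over some fiber, and by Lemma~\ref{LemIrRepq} and Lemma~\ref{LemIrRep1} these fibers are $c_0$-$\oplus_n B(\C^n)$ (for $q<1$) and $C_0(\C\times\R_0^+)$ (for $q=1$); hence every irreducible representation of $C_0(G\underset{(0,1\rbrack}{\times} G)$ has unital image. By Theorem~\ref{TheoKL}, $\Delta(N)$ is regular and $C_c(G\underset{(0,1\rbrack}{\times} G)$ is a core for $\Delta(N)$; applying the same to $\Delta(N)^*$ shows $C_c(G\underset{(0,1\rbrack}{\times} G)$ is a core for $\Delta(N)^*$ as well.
\end{proof}
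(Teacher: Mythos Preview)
Your proof is correct and follows exactly the approach of the paper, which simply notes that every irreducible representation of $C_0(G\underset{(0,1\rbrack}{\times} G)$ factors over some fiber $C_0(G_q)\otimes C_0(G_q)$, all of whose irreducible representations are finite dimensional, so that the argument of Proposition~\ref{PropExN} (via Theorem~\ref{TheoKL}) applies verbatim. You have spelled out in detail what the paper compresses into a single sentence, but the content is the same.
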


\begin{proof} Since any irreducible representation of $C_0(G\underset{(0,1\rbrack}{\times}G)$ splits over some $C_0(G_y)\otimes C_0(G_y)$, all of whose irreducible representations are finite dimensional, the proof is identical to that of Proposition \ref{PropExN}.
\end{proof} 

In fact, the same argument shows that $\Delta(N)$ is the closure of the operator $A\otimes N + N\otimes A^{-1}$ (when suitably defined). Also the next proposition is proven in the same way.

\begin{Prop}\label{PropExDelT} There exists a unique regular operator \[T_{12}T_{13} \in M_2(C_0(G\underset{(0,1\rbrack}{\times} G))^{\eta}\] such that $M_2(C_c(G\underset{(0,1\rbrack}{\times} G))$ is a core for $T_{12}T_{13}$ and on which \[T_{12}T_{13} = \begin{pmatrix} A\otimes A & A\otimes N + N\otimes A^{-1} \\ 0 & A^{-1}\otimes A^{-1}\end{pmatrix}.\] 
\end{Prop}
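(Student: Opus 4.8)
The plan is to follow essentially verbatim the proof of Proposition \ref{PropExN}, using the local-to-global affiliation machinery of Theorem \ref{TheoKL}. First I would observe that the matrix entries appearing in the displayed formula all make sense as elements of $C_0(G\underset{(0,1\rbrack}{\times} G)$ in the way discussed just before Proposition \ref{PropExNDelta}: namely $A\otimes A$ and $A^{-1}\otimes A^{-1}$ are (restrictions to a suitable core of) honest affiliated coordinate functions coming from $C_0(H\underset{(0,1\rbrack}{\times}H)$, while $A\otimes N + N\otimes A^{-1}$ is the operator $\Delta(N)$ just constructed in Proposition \ref{PropExNDelta}, with invariant core $C_c(G\underset{(0,1\rbrack}{\times}G)$. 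Hence the matrix
\[\begin{pmatrix} A\otimes A & \Delta(N) \\ 0 & A^{-1}\otimes A^{-1}\end{pmatrix}\]
is well defined on $M_2(C_c(G\underset{(0,1\rbrack}{\times}G))$, which is a right-invariant dense domain in $M_2(C_0(G\underset{(0,1\rbrack}{\times}G))$.

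Next I would compute the formal adjoint. For $f,g\in C_c(H\underset{(0,1\rbrack}{\times}H)$ and $x,y\in C_0(G\underset{(0,1\rbrack}{\times}G)$, the same elementary computation used in the proofs of Proposition \ref{PropExN} and Proposition \ref{PropExNDelta} (moving the shift operators $s\otimes 1$, $1\otimes s$ across via the partial-automorphism relations, together with the identities $|N|\cdot|N| = |N|^2$ etc.\ encoded in the coordinate functions) shows that
\[\begin{pmatrix} A\otimes A & \Delta(N) \\ 0 & A^{-1}\otimes A^{-1}\end{pmatrix}\]
has densely defined adjoint, given on $M_2(C_c(G\underset{(0,1\rbrack}{\times}G))$ by
\[\begin{pmatrix} A\otimes A & 0 \\ \Delta(N)^* & A^{-1}\otimes A^{-1}\end{pmatrix},\]
using the formula for $\Delta(N)^*$ from Proposition \ref{PropExNDelta}. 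Therefore the closure, call it $T_{12}T_{13}$, is a closed semiregular operator on $M_2(C_0(G\underset{(0,1\rbrack}{\times}G))$ with core $M_2(C_c(G\underset{(0,1\rbrack}{\times}G))$, and likewise $M_2(C_c(G\underset{(0,1\rbrack}{\times}G))$ is a core for $(T_{12}T_{13})^*$.

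Finally I would invoke regularity: every irreducible representation of $M_2(C_0(G\underset{(0,1\rbrack}{\times}G))$ factors through $M_2(C_0(G_q)\otimes C_0(G_q))$ for some $q\in(0,1\rbrack$ — since $C_0(G\underset{(0,1\rbrack}{\times}G)$ is a continuous field with fibers $C_0(G_q)\otimes C_0(G_q)$ (Proposition \ref{PropTensCont} and the remark following it) — and each $C_0(G_q)\otimes C_0(G_q)$ has only finite-dimensional irreducible representations by Lemma \ref{LemIrRepq} and Lemma \ref{LemIrRep1}. Hence $\pi(M_2(C_0(G\underset{(0,1\rbrack}{\times}G)))$ is unital for every irreducible $\pi$, and Theorem \ref{TheoKL} applies, giving that $T_{12}T_{13}$ is regular (hence affiliated) and that any right-invariant dense domain — in particular $M_2(C_c(G\underset{(0,1\rbrack}{\times}G))$ — is a core. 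Uniqueness of $T_{12}T_{13}$ is immediate since the $z$-transform is determined by its values on a core and the core determines the operator. The only point requiring minor care — and the step I expect to be the main (though modest) obstacle — is bookkeeping the mixed shift/coordinate identities in the formal-adjoint computation and checking that the off-diagonal term really closes up to $\Delta(N)$ rather than a proper extension; this is exactly the phenomenon already remarked upon after Proposition \ref{PropExNDelta}, so it reduces to the analysis carried out there.
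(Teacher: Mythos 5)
Your proposal is correct and follows essentially the same route as the paper, which simply notes that the proposition is proven in the same way as Proposition \ref{PropExN} (and Proposition \ref{PropExNDelta}): define the matrix operator on the invariant dense domain $M_2(C_c(G\underset{(0,1\rbrack}{\times}G))$, verify the formal adjoint relation, and invoke Theorem \ref{TheoKL} since every irreducible representation factors through some $C_0(G_q)\otimes C_0(G_q)$ and is therefore finite dimensional. No gaps.
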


\begin{Theorem}\label{TheoFieldG} There exists a unique  coassociative \[\Delta \in \Mor(C_0(G), C_0(G\underset{(0,1\rbrack}{\times} G))\] such that \[(\id\otimes \Delta)(T) = T_{12}T_{13}.\] 
\end{Theorem}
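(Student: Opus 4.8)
The plan is to construct $\Delta$ fiber-by-fiber and then glue using Proposition~\ref{PropGen}, which is exactly designed for this situation: we have the quantum generating family $(Q,T)$ for $C_0(G)$ by Theorem~\ref{TheoExT}, and we want a morphism into the continuous field $C_0(G\underset{(0,1\rbrack}{\times}G)$ sending $T$ to $T_{12}T_{13}$. Since $C$ in Proposition~\ref{PropGen} is taken to be $\C\oplus M_2(\C)$, which is finite-dimensional, the hypotheses of that proposition are met once we produce, for every $q\in(0,1\rbrack$, a morphism $\Delta_q\in\Mor(C_0(G_q),C_0(G_q)\otimes C_0(G_q))$ with $(\id\otimes\Delta_q)(Q_q,T_q)=((Q_q,Q_q),(T_q)_{12}(T_q)_{13})$. (The $Q$-component is automatic since $\Delta_q$ is $C_0((0,1\rbrack)$-linear, i.e. sends $Q_q$ to $Q_q\otimes 1 = 1\otimes Q_q$.) Uniqueness of $\Delta$ then follows from Lemma~\ref{LemSepRep} together with Theorem~\ref{TheoExT}.

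So the real work is the fiberwise statement: for each fixed $q$, the operator $T_q=\begin{pmatrix}A_q & N_q\\ 0 & A_q^{-1}\end{pmatrix}$ is a quantum generating family for $C_0(G_q)$ (this is \cite[Section 4, Example 6]{Wor95}, and is already invoked inside the proof of Theorem~\ref{TheoExT}), and one must check that $(T_q)_{12}(T_q)_{13}$, which lives affiliated to $M_2(C_0(G_q)\otimes C_0(G_q))$ by Proposition~\ref{PropExDelT} restricted to the fiber, defines via Woronowicz's theory a morphism $\Delta_q$. For $0<q<1$ this is precisely the construction of the comultiplication on the dual of $SU_q(2)$ — the standard fact that the $U_q(\su(2))$-relations among $A_q,N_q,N_q^*$ are preserved by the assignment $A_q\mapsto A_q\otimes A_q$, $N_q\mapsto A_q\otimes N_q+N_q\otimes A_q^{-1}$, which one verifies on the common core $C_c(G_q)$ using the commutation relations $AN=QNA$ and $[N,N^*]=(Q-Q^{-1})(A^2-A^{-2})$ recorded just after Proposition~\ref{PropExN}. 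Coassociativity is likewise checked on cores fiberwise (or directly, via the $T_{12}T_{13}$ notation which makes $(\Delta_q\otimes\id)$ and $(\id\otimes\Delta_q)$ both send $T_q$ to $(T_q)_{12}(T_q)_{13}(T_q)_{14}$, whence equality by the separation property). For $q=1$ one uses the explicit model $C_0(G_1)\cong C_0(\C\times\R_0^+)$ of Lemma~\ref{LemIrRep1}; there the operators $A_1,N_1$ become the coordinate functions, and the comultiplication is just the pullback of the group multiplication on the $az+b$-type group of special upper triangular matrices, so the required identity $(\id\otimes\Delta_1)(T_1)=(T_1)_{12}(T_1)_{13}$ is immediate from $\begin{pmatrix}a&n\\0&a^{-1}\end{pmatrix}\begin{pmatrix}a'&n'\\0&a'^{-1}\end{pmatrix}=\begin{pmatrix}aa'&an'+na'^{-1}\\0&a^{-1}a'^{-1}\end{pmatrix}$.

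Concretely I would organize the proof as: (i) recall from \cite[Section 4, Example 6]{Wor95} that each $T_q$ generates $C_0(G_q)$; (ii) for each $q$ invoke the same reference (or a direct check on $C_c(G_q)$) to obtain $\Delta_q\in\Mor(C_0(G_q),C_0(G_q)\otimes C_0(G_q))$ with $(\id\otimes\Delta_q)(T_q)=(T_q)_{12}(T_q)_{13}$; (iii) apply Proposition~\ref{PropGen} with $Y=(0,1\rbrack$, $A=C_0(G)$, $B=C_0(G\underset{(0,1\rbrack}{\times}G)$, $C=\C\oplus M_2(\C)$, $T=(Q,T)$, $S=((Q,Q),T_{12}T_{13})$ — here one must note that the localisation of $T_{12}T_{13}$ at $q$ is $(T_q)_{12}(T_q)_{13}$, which is clear from Proposition~\ref{PropExDelT} and the compatibility of all these constructions with localisation already remarked in the text — to obtain a unique $\Delta$ with $(\id\otimes\Delta)(T)=T_{12}T_{13}$; (iv) deduce coassociativity of $\Delta$ from coassociativity of each $\Delta_q$ (the two morphisms $(\Delta\otimes\id)\Delta$ and $(\id\otimes\Delta)\Delta$ agree on each fiber, hence agree by faithfulness of the field of localisations, or alternatively both satisfy $(\id\otimes-)(T)=T_{12}T_{13}T_{14}$ and one applies separation).

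The main obstacle is step (ii) for $0<q<1$: making rigorous, in the affiliated-operator framework, that the formulas for $\Delta_q(A_q)$ and $\Delta_q(N_q)$ actually assemble into a genuine element of $\Mor(C_0(G_q),C_0(G_q)\otimes C_0(G_q))$ rather than just a formal assignment on a core. This is where Woronowicz's generation machinery (Lemma~\ref{LemCritWor}) does the heavy lifting — one needs the separation-of-representations property for $(T_q)_{12}(T_q)_{13}$ and the existence of the element $r$ — but since the paper explicitly defers this to \cite[Section 4, Example 6]{Wor95} (and already used it in Theorem~\ref{TheoExT}), in the write-up I would simply cite it rather than reproduce it. A secondary technical point worth stating cleanly is the identification of the localisation of $T_{12}T_{13}$ with $(T_q)_{12}(T_q)_{13}$ and of $\Delta(N)$'s localisation with $\Delta_q(N_q)$; these follow from the fact, noted after Proposition~\ref{PropExT}, that $C_c(G_q)$ is the localisation of $C_c(G)$ and $N,A,T$ localise to $N_q,A_q,T_q$, together with the analogous remark for the $C_0((0,1\rbrack)$-tensor product and Proposition~\ref{PropExNDelta}/Proposition~\ref{PropExDelT}.
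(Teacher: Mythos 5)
Your proposal is correct and follows essentially the same route as the paper: reduce to the fibers via Proposition~\ref{PropGen} using the generating family $(Q,T)$ from Theorem~\ref{TheoExT}, invoke Woronowicz's treatment (the paper cites \cite[Section 4, Example 6]{Wor95} together with \cite[Theorem 5.1]{PoWo90}) for the existence of the coassociative $\Delta_q$ when $0<q<1$, and use the explicit upper-triangular group structure on $G_1=\C\times\R_0^+$ for $q=1$. The only difference is one of emphasis: where you propose possibly verifying the $0<q<1$ fiber case by hand on the core, the paper simply cites the Podle\'{s}--Woronowicz construction, which is the cleaner choice.
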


\begin{proof} As $(Q,T)$ generates $C_0(G)$ by Theorem \ref{TheoExT}, it is, by Proposition \ref{PropGen}, enough to prove that there exists, for each $0<q\leq 1$, a coassociative morphism \[\Delta_q: C_0(G_q) \rightarrow M(C_0(G_q)\otimes C_0(G_q))\] such that \begin{equation}\label{EqDelq}(\id\otimes \Delta_q)(T_q) = (T_q)_{12}(T_{q})_{13}.\end{equation}

For $0<q<1$, this is \cite[Section 4, Example 6]{Wor95} and \cite[Theorem 5.1]{PoWo90}.

For $q=1$, endow $G_1 = \C\times \R_0^+$ with the multiplication \[(n,a)(m,b) = (am+nb^{-1},ab). \] Then $G_1$ is a Lie group, and the functions $N_1,A_1$ are precisely the coordinate functions assiociated to the first and second variable. It is easily seen that the associated comultiplication satisfies \eqref{EqDelq} for $q=1$. 
\end{proof} 

Note that at $q=1$, we get the locally compact group \[G_1 = \left\{\begin{pmatrix} a & n \\ 0 & a^{-1}\end{pmatrix} \mid n\in \C,a>0\right\},\] while at $0<q<1$, we have that $(C_0(G_q), \Delta_q)$ is the discrete quantum group dual of Woronowicz's quantum $SU_q(2)$-group \cite{Wor87a}.

\subsection{Bisimplifiability} 

\begin{Def}(\cite[D\'{e}finition 4.1]{Bla96}) Let $A$ be a continuous field of C$^*$-algebras over a locally compact space $Y$, endowed with a coassociative morphism $\Delta \in \Mor(A,A\underset{C_0(Y)}{\otimes} A)$. One calls $\Delta$ \emph{bisimplifiable} if \[\lbrack \Delta(A)(1\otimes A) \rbrack = \lbrack (A\otimes 1)\Delta(A)\rbrack = A\underset{C_0(Y)}{\otimes} A,\] where $\lbrack \,\cdot\,\rbrack$ denotes closed linear span.  
\end{Def} 

Our aim will be to show that the field $C_0(G)$ together with its comultiplication constructed in Section \ref{SecFieldCom} is bisimplifiable. We will make use of the following lemma (which is implicitly used in \cite[Proposition 7.8]{Bla96}).

\begin{Lem}\label{LemDens} Assume $A$ is a $C_0(Y)$-algebra, and $B\subseteq A$ a $C_0(Y)$-invariant subspace. If the image $B_y$ of $B$ in $A_y$ is dense in $A_y$ for each $y\in Y$, then $B$ is dense in $A$.
\end{Lem}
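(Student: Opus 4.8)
The plan is to prove Lemma \ref{LemDens} by a standard partition-of-unity / local-to-global argument, using the characterization of the norm on a $C_0(Y)$-algebra. Fix $a\in A$ and $\varepsilon>0$; the goal is to produce $b\in B$ with $\|a-b\|<\varepsilon$. Since $A$ is separable, by the identity \eqref{EqNormSup} there is an at most countable set $Y_0\subseteq Y$ with $\|a\|=\sup_{y\in Y_0}\|a_y\|$, but in fact it will be cleaner to work directly with all of $Y$: for each $y\in Y$, density of $B_y$ in $A_y$ gives $b^{(y)}\in B$ with $\|a_{y}-b^{(y)}_{y}\|<\varepsilon/2$. Because the function $z\mapsto \|(a-b^{(y)})_z\|$ is upper semicontinuous on a $C_0(Y)$-algebra (the field need not be continuous, but upper semicontinuity of $z\mapsto\|x_z\|$ holds for every $C_0(Y)$-algebra — indeed $\{z:\|x_z\|<\delta\}$ is open, see the discussion around \cite[Proposition 2.8]{Bla96}), there is an open neighbourhood $U_y\ni y$ on which $\|(a-b^{(y)})_z\|<\varepsilon/2$ for all $z\in U_y$.

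Next I would pass to a countable subcover and patch. One subtlety is that $Y$ need not be compact, so instead I exhaust: write $a = a'g$ with $g\in C_0(Y)$, $g\geq 0$, and $K=\{g\geq \eta\}$ compact for suitable small $\eta$, so that $\|a - ga\|$ (for a cutoff $g$ identically $1$ on a large compact set) is as small as we like — this reduces matters to approximating $a$ uniformly over a compact set $K\subseteq Y$. Cover $K$ by finitely many of the $U_{y_1},\dots,U_{y_m}$, take a partition of unity $\chi_1,\dots,\chi_m\in C_c(Y)$ subordinate to this cover with $\sum_i\chi_i = 1$ on $K$, and set $b = \sum_{i=1}^{m}\chi_i b^{(y_i)}$. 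Since $B$ is $C_0(Y)$-invariant, $b\in B$. Then for $z\in K$ one estimates in the fiber
\[
\|(a-b)_z\| = \Bigl\| \sum_i \chi_i(z)\bigl(a - b^{(y_i)}\bigr)_z \Bigr\| \le \sum_i \chi_i(z)\,\bigl\|\bigl(a-b^{(y_i)}\bigr)_z\bigr\| < \sum_i \chi_i(z)\cdot \tfrac{\varepsilon}{2} = \tfrac{\varepsilon}{2},
\]
using that $\chi_i(z)\neq 0$ forces $z\in U_{y_i}$. For $z\notin K$ one controls $\|(a-b)_z\|$ by the cutoff argument (both $a$ and $b$ are, up to $\varepsilon$-error, supported in $K$). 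Taking the supremum over $z$ via \eqref{EqNormSup} gives $\|a-b\|<\varepsilon$, and since $\varepsilon$ was arbitrary, $B$ is dense in $A$.

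The main obstacle, and the point requiring care, is the non-compactness of $Y$: the naive argument with a partition of unity only handles a compact piece, and one must make sure the "tail" of $a$ outside a large compact set is genuinely negligible. This is handled by the $C_0(Y)$-module structure: $\|a - ga\| = \|(1-g)a\|$ can be made small because $(1-g)a$ lies in the ideal $C_0(Y\setminus K')A$ whose fiberwise norms $\|(1-g)_z a_z\| = |1-g(z)|\,\|a_z\|$ are uniformly small once $g\approx 1$ on a large enough compact set (using $\|a\|=\sup_z\|a_z\|<\infty$ and $g\to 1$ in the strict sense). Replacing $a$ by $ga$ at the start and choosing all the local approximants $b^{(y)}$ for $ga$ reduces everything to the compact set $\mathrm{supp}(g)$, where the partition-of-unity patching applies verbatim. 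Everything else is routine; the only genuinely used external input is the fiberwise norm formula \eqref{EqNormSup} and upper semicontinuity of $z\mapsto\|x_z\|$ for a general $C_0(Y)$-algebra.
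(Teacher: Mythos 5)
Your argument is correct, but it takes a genuinely different route from the paper's proof. You prove density directly and quantitatively: cut $a$ down to $ga$ with $g\in C_c(Y)$ (using non-degeneracy of the $C_0(Y)$-structure), approximate $ga$ in each fiber over $K=\mathrm{supp}(g)$ using the hypothesis, spread each fiberwise estimate to a neighbourhood $U_y$ via upper semicontinuity of $z\mapsto\|x_z\|$ (a standard fact for arbitrary $C_0(Y)$-algebras, provable from $\|x_y\|=\inf\{\|fx\|: f\in C_c(Y),\ 0\le f\le 1,\ f\equiv 1 \text{ near } y\}$), patch with a partition of unity subordinate to a finite subcover of $K$, and conclude with \eqref{EqNormSup}. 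The paper argues instead by duality: if $\overline{B}\neq A$, the weak-$*$ compact convex set of functionals of norm $\le 1$ vanishing on $B$ has, by Krein--Milman, a nonzero extreme point $\omega$; $C_0(Y)$-invariance of $B$ together with centrality of $C_0(Y)^{**}$ in $A^{**}$ forces the measure induced by $|\omega|$ on $Y$ to be concentrated at a single point, so $\omega$ factors through one fiber $A_y$ and must vanish there by density of $B_y$ --- a contradiction. Your approach is the classical bundle-theoretic one: more elementary and constructive, with an explicit approximant, at the cost of the cutoff/partition-of-unity bookkeeping and the appeal to upper semicontinuity; the paper's approach is shorter given the functional-analytic machinery and never needs semicontinuity or partitions of unity, only the structure of the annihilator of $B$ in $A^*$.

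One spot to tighten in your write-up: for $z\notin K$ the phrase ``both $a$ and $b$ are, up to $\varepsilon$-error, supported in $K$'' is not quite the mechanism you actually use, since $b=\sum_i\chi_i b^{(y_i)}$ can have nonzero fibers outside $K$. The correct (and easy) point is that $(ga)_z=0$ for $z\notin K$, while $\chi_i(z)\neq 0$ still forces $z\in U_{y_i}$, where $\|(ga-b^{(y_i)})_z\|<\varepsilon/2$ by the choice of $U_{y_i}$; provided you take the partition of unity with $\sum_i\chi_i\le 1$ on all of $Y$ and $\equiv 1$ on $K$, the same convexity estimate $\|(ga-b)_z\|\le \sum_i\chi_i(z)\,\|(ga-b^{(y_i)})_z\|<\varepsilon/2$ then holds for every $z\in Y$, and \eqref{EqNormSup} finishes the proof. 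With that adjustment your argument is complete.
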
 
\begin{proof} Assume $B\neq A$. Then the set $C$ of functionals of norm $\leq 1$ which vanish on $B$ form a convex w$^*$-compact set with more than one point. Hence, by the Krein-Milman theorem, there exists a non-zero extreme functional $\omega$ in this set, necessarily of norm 1. 

We claim that $\omega$ factors over some $A_y$. Indeed, consider the absolute value $|\omega|\in A^*$, which we can extend to a continuous functional on $M(A) \subseteq A^{**}$. In particular, the restriction $|\omega|_{\mid C_0(Y)}$ induces a Radon probability measure $\mu$ on $Y$. If then the support of $\mu$ consists of more than one point, we can find Borel sets $Y_1,Y_2\subseteq Y$ with $Y_1\cap Y_2 = \emptyset$ and  $Y =  Y_1\cup Y_2$, and with $\mu(Y_i)\neq0$. Let $\chi_{S} \in C_0(Y)^{**} \subseteq A^{**}$ be the characteristic function of a Borel set $S$. We claim that then \[\omega = \mu(Y_1)\omega\left(\frac{\chi_{Y_1}}{\mu(Y_1)}\,\cdot\,\right) + \mu(Y_2)\omega\left(\frac{\chi_{Y_2}}{\mu(Y_2)}\,\cdot\,\right)\] is a non-trivial convex combination for $\omega$ within $C$. Indeed, the coefficients form a convex combination since \[1 = \|\omega\| = \||\omega|\| = \||\omega|_{\mid C_0(Y)}\| = \|\mu\| = \mu(Y).\] Further, since $B$ is a $C_0(Y)$-module, we have that $\omega(f\,\cdot\,)_{\mid B}= 0$ for any $f\in C_0(Y)^{**}$. Finally, since $C_0(Y)^{**}$ is central in $A^{**}$, it is easy to see that \[\|\omega(f\,\cdot\,)\|\leq |\omega|(|f|),\quad  \textrm{for any }f\in C_0(Y)^{**},\] and so the two functionals in the above convex combination are both in $C$. This contradicts the extremality of $\omega$. 

It follows from the above that $|\omega|$, and hence also $\omega$, factors over some $A_y$. But since $B_y$ is dense in $A_y$, this is impossible. 
\end{proof}

\begin{Theorem} The field $(C_0(G),\Delta)$ is bisimplifiable.
\end{Theorem}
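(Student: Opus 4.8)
The plan is to reduce the bisimplifiability of $(C_0(G),\Delta)$ to the corresponding statement on each fiber, using Lemma~\ref{LemDens}. By Proposition~\ref{PropTensCont} we know that $C_0(G\underset{(0,1\rbrack}{\times}G)$ is a continuous field with fibers $C_0(G_q)\otimes C_0(G_q)$, and by Proposition~\ref{PropGen}/Theorem~\ref{TheoFieldG} the comultiplication $\Delta$ localizes to $\Delta_q$ on each fiber. Thus $\Delta(C_0(G))(1\otimes C_0(G))$ is a $C_0((0,1\rbrack)$-invariant subspace of $C_0(G\underset{(0,1\rbrack}{\times}G)$ whose image in the fiber at $q$ is (dense in) $\Delta_q(C_0(G_q))(1\otimes C_0(G_q))$, and similarly for the other side. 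So by Lemma~\ref{LemDens} it suffices to prove that each $\Delta_q$ is bisimplifiable in the ordinary sense, i.e.\ that $\lbrack\Delta_q(C_0(G_q))(1\otimes C_0(G_q))\rbrack = \lbrack (C_0(G_q)\otimes 1)\Delta_q(C_0(G_q))\rbrack = C_0(G_q)\otimes C_0(G_q)$ for all $0<q\leq 1$.

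For $0<q<1$, the pair $(C_0(G_q),\Delta_q)$ is the discrete quantum group dual of $SU_q(2)$ (as noted after Theorem~\ref{TheoFieldG}), so bisimplifiability is classical: it follows from the Podle\'s--Woronowicz construction of the multiplicative unitary for $SU_q(2)$ quoted in \cite{PoWo90} together with the reference already in use, \cite[Section~4, Example~6]{Wor95}; concretely, the multiplicative unitary $W$ implementing $\Delta_q$ is a well-defined unitary precisely because the two density (bisimplifiability) conditions hold, and this is part of what it means for $SU_q(2)$ to be a compact quantum group. For $q=1$, $G_1$ is the genuine locally compact group $\left\{\begin{pmatrix} a & n \\ 0 & a^{-1}\end{pmatrix}\mid n\in\C, a>0\right\}$ with $\Delta_1$ the usual comultiplication on $C_0(G_1)$, and bisimplifiability of $C_0(G_1)$ as a Hopf--C$^*$-algebra is the standard fact that for a locally compact group the maps $(x,y)\mapsto(xy,y)$ and $(x,y)\mapsto(x,xy)$ are homeomorphisms of $G_1\times G_1$; a Stone--Weierstrass argument then gives the two closed-span equalities. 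In both cases one is simply invoking that these objects are (locally compact) quantum groups in the sense already in the literature.

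The step I expect to require the most care is the first reduction: verifying that Lemma~\ref{LemDens} applies, i.e.\ that the image of $\Delta(C_0(G))(1\otimes C_0(G))$ in the fiber at $q$ is exactly $\Delta_q(C_0(G_q))(1\otimes C_0(G_q))$ (and likewise for $(C_0(G)\otimes 1)\Delta(C_0(G))$). This uses the compatibility of the localization maps with both $\Delta$ (from Proposition~\ref{PropGen}, since $\Delta$ is built precisely so that $\Delta_y$ is the localization) and with the $C_0((0,1\rbrack)$-tensor product (Proposition~\ref{PropTensCont}, identifying the fiber of $C_0(G\underset{(0,1\rbrack}{\times}G)$ at $q$ with $C_0(G_q)\otimes C_0(G_q)$), plus the fact that the localization map $C_0(G\underset{(0,1\rbrack}{\times}G)\to C_0(G_q)\otimes C_0(G_q)$ is a surjective $^*$-homomorphism, so it sends the closed linear span of $\Delta(C_0(G))(1\otimes C_0(G))$ onto the closed linear span of $\Delta_q(C_0(G_q))(1\otimes C_0(G_q))$. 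Once that bookkeeping is in place, everything else is either an appeal to Lemma~\ref{LemDens} or a citation.
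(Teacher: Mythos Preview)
Your overall strategy---reduce to fibers via Lemma~\ref{LemDens} and invoke known bisimplifiability of the $(C_0(G_q),\Delta_q)$---is exactly the one the paper uses. However, there is a genuine gap in your reduction step, and it is precisely the step the paper works hardest on.

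Lemma~\ref{LemDens} applies to a $C_0(Y)$-invariant subspace $B$ of the $C_0(Y)$-algebra $A$. You take $B=\lbrack\Delta(C_0(G))(1\otimes C_0(G))\rbrack$ and $A=C_0(G\underset{(0,1\rbrack}{\times}G)$, but a priori $B$ lives only in $M(A)$: $\Delta$ is a morphism into $M(A)$, and $1\otimes b$ is also just a multiplier (since $1\notin C_0(G)$). Knowing that each localisation $\Delta_q(a_q)(1\otimes b_q)$ lies in $A_q=C_0(G_q)\otimes C_0(G_q)$ does \emph{not} force $\Delta(a)(1\otimes b)\in A$: for a $C_0(Y)$-algebra $A$, a multiplier $m\in M(A)$ with $m_y\in A_y$ for all $y$ need not belong to $A$ (already for $A=C_0(Y)$, every $m\in C_b(Y)$ has $m_y\in\C=A_y$). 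So the ``bookkeeping'' you describe cannot even start until you have established the inclusion $\Delta(C_0(G))(1\otimes C_0(G))\subseteq C_0(G\underset{(0,1\rbrack}{\times}G)$.

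That inclusion is the substance of the paper's proof. There it is obtained by showing, for $f,g\in C_c(H)$ supported where $\min\{q,t\}>\varepsilon$, that $\Delta(f)(1\otimes g)=\Delta(f)(h\otimes g)$ for a suitable $h\in C_c(H)$ (so the product lies in $C_c(G\underset{(0,1\rbrack}{\times}G)$). This equality is checked fiberwise for $0<q<1$ using the $SU_q(2)$ fusion rules: $\Delta_q(\delta_{q;r,l})$, acting in $\pi_n\otimes\pi_m$, is nonzero only when $|n-m|+1\le r\le n+m-1$, which bounds $n$ once $m$ and $r$ are bounded; continuity of the field then extends the identity to $q=1$. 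Only after this inclusion is in hand does the paper invoke fiberwise bisimplifiability together with Lemma~\ref{LemDens}, exactly as you outline. In short: your plan is right, but the step you call bookkeeping is the actual theorem.
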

\begin{proof}
As the localisations $(C_0(G_q),\Delta_q)$ are either a discrete quantum group or a locally compact group, it follows that the localisations are bisimplifiable. Hence, by Lemma \ref{LemDens} it suffices to show that \[\Delta(C_0(G))(1\otimes C_0(G)) \subseteq C_0(G\underset{(0,1\rbrack}{\times}G),\quad \Delta(C_0(G))(C_0(G)\otimes 1) \subseteq C_0(G\underset{(0,1\rbrack}{\times}G).\]

We will prove the first inclusion, the second one being similar. In fact, we will show that \[\Delta(C_c(H))(1\otimes C_c(H)) \subseteq C_c(G\underset{(0,1\rbrack}{\times} G),\] which is clearly sufficient to prove the statement. 

Let $0<q<1$, and let $\pi_n$ be the $n$th-dimensional representation of $C_0(G_q)$. Let $r\in \N_0$ and $0\leq l <r$, and let $\delta_{q;r,l}$ be the Dirac function in $C_c(H_q)$ at the point $(q^r,q^{\frac{1-r}{2}+l})$. Then it is well known, from the fusion rules for representations of $C_0(G_q)$, that $(\pi_n\otimes \pi_m)\Delta_q(\delta_{q;r,l}) = 0$ unless $r = n+m-2k-1$ with $0\leq k\leq \min\{n,m\}-1$, in which case it is a one-dimensional projection. In the latter case, choose a unit vector $\xi_{q;r,l}^{(n,m)}$ in its range. From the concrete formula $\Delta_q(A_q)= A_q\otimes A_q$, it follows that we must have \[\xi_{q;r,l}^{(n,m)} = \sum_{p = \max\{0,l-m\}}^{\min\{n,l\}} f_{q;r,l}^{(n,m)}(p) e_{p}^{(n)}\otimes e_{l-p}^{(m)},\] and then \[\Delta_q(\delta_{q;r,l}) =  \underset{|n-m|+1\leq r\leq n+m-1}{\underset{n+m-r-1 \textrm{ even}}{\sum_{n,m}}}\sum_{p,p' = \max\{0,l-m\}}^{\min\{n,l\}} \overline{f_{q;r,l}^{(n,m)}}(p)f_{q;r,l}^{(n,m)}(p') e_{p',p}^{(n)} \otimes e_{l-p',l-p}^{(m)}.\]

Pick now $0<\varepsilon <1$, and suppose $f,g\in C_c(H)$ with $f(q,t,a)=g(q,t,a)=0$ for $\min\{q,t\}\leq \varepsilon$. Then for $q\leq \varepsilon$, $f_q,g_q$ are zero, while for $\varepsilon< q <1$ they are linear combinations of the $\delta_{q;r,l}$ with $1\leq r< \log_q(\varepsilon)$ and $0\leq l <r$. Hence $\Delta_q(f_q)(1\otimes g_q)$ is zero for $q\leq \varepsilon$, and for $\varepsilon<q<1$ an (infinite) linear combination of elements $e_{p',p}^{(n)}\otimes e_{l-p',l-p}^{(m)}$ where $n,m$ satisfy $m< \log_q(\varepsilon)$ and $|n-m|+1< \log_q(\varepsilon)$.  In particular, no terms appear in which $n\geq 2\log_q(\varepsilon)$. 

It follows that if $h$ is a function with compact support on $H$ and such that $h(q,t,a) = 1$ for $\min\{q,t\}\geq \varepsilon^2$, then \[\Delta_q(f_q)(1\otimes g_q)=\Delta_q(f_q)(h_q\otimes g_q),\quad 0<q<1.\] By continuity of the field, this implies $\Delta(f)(1\otimes g) = \Delta(f)(h\otimes g)$, which finishes the proof. 
\end{proof} 

\subsection{Invariant integral on $G$}

In this section, we prove that $(C_0(G),\Delta)$ has an associated continuous field of multiplicative unitaries. 

Recall the $C_0((0,1\rbrack)$-linear map $\varphiH$ on $C_c(I)$ defined in Lemma \ref{LemContFieldHilb}. By restriction, we can consider it as a map on $C_c(H)$. Let further $F: C_0(G) \rightarrow C_0(H)$ be the canonical conditional expectation.

\begin{Lem} There exists a unique $C_c((0,1\rbrack)$-linear functional \[\varphiH': C_c(G) \rightarrow C_c((0,1\rbrack)\] such that $\varphiH'(x) = \varphiH(F(x))$ for all $x\in C_c(G)$.
\end{Lem}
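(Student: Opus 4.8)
The plan is to define $\varphiH'$ by the prescribed formula $\varphiH' = \varphiH \circ F$ restricted to $C_c(G)$, and to verify that this really does land in $C_c((0,1\rbrack)$ rather than merely in $C_0((0,1\rbrack)$ or some larger function space; uniqueness is then automatic since $\varphiH'$ is completely determined by the formula. The key observation is that $F$ maps $C_c(G)$ into $C_c(H)$: indeed, $C_c(G) = C_c(H)C_0(G) = C_0(G)C_c(H)$ by \eqref{EqComp}, and since $F$ is $C_0(H)$-bilinear we get $F(C_c(G)) = F(C_c(H)C_0(G)C_c(H)) \subseteq C_c(H)F(C_0(G))C_c(H) \subseteq C_c(H)C_0(H)C_c(H) = C_c(H)$. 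Combined with the fact, established in the proof of Lemma \ref{LemContFieldHilb}, that $\varphiH$ restricted to $C_c(H)$ has range in $C_c((0,1\rbrack)$, this shows $\varphiH'(C_c(G)) \subseteq C_c((0,1\rbrack)$.

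First I would record the $C_0(H)$-bilinearity of $F$ (it is a conditional expectation onto $C_0(H)$, hence $C_0(H)$-bilinear by definition) and use it together with the local-units property of $C_c(G)$ in $C_c(H)$ — noted just after the proof of \eqref{EqComp} — to conclude $F(C_c(G)) \subseteq C_c(H)$. Concretely, any $x \in C_c(G)$ can be written $x = fy$ with $f \in C_c(H)$, $y \in C_0(G)$, so $F(x) = F(fy) = fF(y) \in C_c(H)C_0(H) \subseteq C_c(H)$; more precisely $fF(y)$ is supported inside the support of $f$, hence compactly supported. Next I would invoke Lemma \ref{LemContFieldHilb}, whose proof shows that the restriction of $\varphiH$ to $C_c(H)$ takes values in $C_c((0,1\rbrack)$. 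Composing, $\varphiH' := \varphiH \circ F|_{C_c(G)}$ is a well-defined map $C_c(G) \to C_c((0,1\rbrack)$.

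Then I would check $C_c((0,1\rbrack)$-linearity: for $g \in C_c((0,1\rbrack)$ and $x \in C_c(G)$, we have $F(gx) = gF(x)$ since $g$ lies in the center of $M(C_0(G))$ and $F$ is $C_0((0,1\rbrack)$-linear (it commutes with the $C_0((0,1\rbrack)$-action because the dual action $\beta$ is by $C_0((0,1\rbrack)$-automorphisms, by Proposition \ref{PropFieldCross}); then $\varphiH'(gx) = \varphiH(gF(x)) = g\varphiH(F(x)) = g\varphiH'(x)$ by the $C_0((0,1\rbrack)$-linearity of $\varphiH$. Uniqueness is immediate: the defining property $\varphiH'(x) = \varphiH(F(x))$ pins down $\varphiH'$ completely on $C_c(G)$, so there is nothing further to prove. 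There is no real obstacle here; the only point requiring a moment's care is the claim $F(C_c(G)) \subseteq C_c(H)$, i.e.\ that $F$ does not enlarge supports — this follows cleanly from $C_0(H)$-bilinearity and the presentation $C_c(G) = C_c(H)C_0(G)$, since multiplication by a compactly supported $f \in C_c(H)$ forces the result to have support inside $\mathrm{supp}(f)$.
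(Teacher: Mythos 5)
Your proposal is correct and follows essentially the same route as the paper, whose proof is just the one-line observation that $F(x)\in C_c(H)$ for $x\in C_c(G)$, so that one may set $\varphiH'=\varphiH\circ F$. You merely spell out the details the paper leaves implicit: the support argument via $C_c(G)=C_c(H)C_0(G)$ and the $C_0(H)$-bilinearity of $F$, the range statement from Lemma \ref{LemContFieldHilb}, and the (trivial) linearity and uniqueness checks.
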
 
\begin{proof} As $F(x) \in C_c(H)$ for $x\in C_c(G)$, we can simply define $\varphiH' = \varphiH\circ F$. 
\end{proof} 

In a similar way, we can define the $C_c((0,1\rbrack)$-linear map \[(\psi'\otimes \id): C_c(G\underset{(0,1\rbrack}{\times} G) \rightarrow C_c(G).\] 

\begin{Lem}\label{LemInvPsi} Let $f,g\in C_c(G)$. Then  \begin{equation}\label{EqInvPsi}(\varphiH'\otimes \id)(\Delta(f)(1\otimes g))  = \varphiH(f)g.\end{equation}
\end{Lem}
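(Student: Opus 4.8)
The plan is to reduce the identity \eqref{EqInvPsi} to a statement about the fibers and then invoke continuity of the field together with density of $C_c$-type subalgebras. First I would check that both sides of \eqref{EqInvPsi} genuinely define elements of $C_c(G)$ (equivalently $C_c(I)$, via the representation $\pi$ of Lemma \ref{LemFaithG}), using that $\varphiH'$ takes values in $C_c((0,1\rbrack)$ and that $C_c(G)$ has local units in $C_c(H)$; this is where the compact-support hypotheses on $f,g$ are used. Having fixed the target algebra, it suffices by the equality \eqref{EqNormSup} (or directly by faithfulness of the field of representations) to prove the identity after localising at each $q\in(0,1\rbrack$, i.e.\ to prove
\[(\varphiH_q'\otimes \id)(\Delta_q(f_q)(1\otimes g_q)) = \varphiH_q(f_q)g_q\]
in $C_c(G_q)$ for all $0<q\leq 1$, where $\varphiH_q' = \varphiH_q\circ F_q$ and $F_q$ is the canonical conditional expectation of $C_0(G_q)$ onto $C_0(H_q)$. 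This reduction is legitimate because $\varphiH'$, $F$ and $\Delta$ all localise correctly: $\varphiH$ localises to $\varphiH_q$ by Lemma \ref{LemContFieldHilb}, $F$ localises to $F_q$ by Lemma \ref{LemField3}, and $\Delta$ localises to $\Delta_q$ by construction in Theorem \ref{TheoFieldG}.

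Next I would dispose of the two fibers separately. For $0<q<1$, $(C_0(G_q),\Delta_q)$ is the discrete quantum group dual of $SU_q(2)$, and $\varphiH_q$ restricted to $C_0(H_q)$ is (up to the normalising constant built into the definition) the standard left Haar weight of this discrete quantum group, written in the spectral/weight coordinates via the isomorphism of Lemma \ref{LemIrRepq}; the functional $\varphiH_q'=\varphiH_q\circ F_q$ is then the left-invariant weight on $C_0(G_q)$ itself. Left invariance of the Haar weight on a discrete quantum group is exactly the identity $(\varphiH_q'\otimes\id)\Delta_q(x)=\varphiH_q'(x)1$ on the appropriate domain, and the variant with the $(1\otimes g_q)$ inserted, $(\varphiH_q'\otimes\id)(\Delta_q(f_q)(1\otimes g_q))=\varphiH_q(f_q)g_q$, follows from it by the usual strong-invariance manipulation (multiplying by $1\otimes g_q$ and using that $\varphiH_q'$ restricted to the diagonal part agrees with $\varphiH_q$ on $C_0(H_q)$). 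For $q=1$, $G_1$ is the Lie group $\left\{\begin{pmatrix} a & n \\ 0 & a^{-1}\end{pmatrix}\mid n\in\C,\,a>0\right\}$, and $\varphiH_1$ is, after the change of variables $\Phi$ computed in the proof of Lemma \ref{LemContFieldHilb} with Jacobian $|J_\Phi(t,a)|=a$, precisely integration against a left Haar measure on $G_1$ (one checks invariance directly from the group law $(n,a)(m,b)=(am+nb^{-1},ab)$ and the explicit density $(t^{-2}-1)a\,\rd t\,\rd a$, equivalently the density in the $(n,a)$-coordinates). Then \eqref{EqInvPsi} at $q=1$ is just the classical identity $\int_{G_1} f(gh)\,g'(h)\,\rd h$ rewritten; more precisely it is left invariance of Haar measure combined with the fact that $\varphiH_1$ on the $S^1$-invariant functions $C_0(H_1)$ is integration over $G_1$.

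The main obstacle I anticipate is the bookkeeping in the $0<q<1$ fiber: one must identify $\varphiH_q|_{C_0(H_q)}$ with a concrete multiple of the left Haar weight of the dual of $SU_q(2)$ under the isomorphism $\rho$ of Lemma \ref{LemIrRepq} (so matching the normalising constant $(1-q)^2$ and the weights $(t^{-1}-t)a^2$ against the quantum dimensions and the modular element appearing in Woronowicz's formula), and then quote or re-derive the invariance property in exactly the form \eqref{EqInvPsi}. A clean way to finesse this is to verify \eqref{EqInvPsi} directly on the spanning elements: take $f=\delta_{q;r,l}$ a Dirac function in $C_c(H_q)$ and $g$ a matrix unit $s^k h$, use the explicit formula for $\Delta_q(\delta_{q;r,l})$ recorded in the proof of bisimplifiability (it is a sum of one-dimensional operators $e^{(n)}_{p',p}\otimes e^{(m)}_{l-p',l-p}$ with coefficients $\overline{f^{(n,m)}_{q;r,l}}(p')f^{(n,m)}_{q;r,l}(p)$), apply $F_q\otimes\id$ to kill the off-diagonal terms on the first leg, and compute the resulting weighted sum; the weights $(t^{-1}-t)a^2$ are designed so that the telescoping over the fusion parameter $k$ collapses to $\varphiH_q(\delta_{q;r,l})g$. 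This is a finite computation for each pair of representations but requires care with the ranges of $p$ and the Clebsch--Gordan coefficients $f^{(n,m)}_{q;r,l}$; everything else is soft (localisation plus continuity of the field).
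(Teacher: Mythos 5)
Your argument is essentially the paper's: both sides are noted to lie in $C_c(G)$, the identity is localised at each fibre (legitimate by \eqref{EqNormSup}, since $\varphiH$, $F$ and $\Delta$ all localise correctly), and on each fibre it is recognised as the invariance property of the Haar functional --- the paper likewise settles $0<q<1$ by identifying $\varphiH_q'$ with the invariant integral of the discrete dual of $SU_q(2)$, and handles $q=1$ ``by continuity (or a direct verification)'', your $q=1$ treatment being exactly that direct verification. The one point you should correct is the left/right bookkeeping: the identity \eqref{EqInvPsi} slices the \emph{first} leg, so it expresses \emph{right} invariance, and since neither $G_1$ nor the discrete dual of $SU_q(2)$ is unimodular this is not a harmless choice of convention --- the density you obtain at $q=1$, namely $a\,\rd n\,\rd a$ in the $(n,a)$-coordinates, is the right Haar measure of $G_1$ (the left one is $a^{-3}\,\rd n\,\rd a$), and correspondingly $\varphiH_q'$ is the right invariant weight on $C_0(G_q)$, not the left one. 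Because you write the invariance identity in the correct first-leg form and propose to verify it directly from the group law, respectively on the Dirac functions $\delta_{q;r,l}$, the slip is terminological rather than structural; but had you instead quoted the left Haar weight/measure from the literature, the claimed identity would fail in every fibre with $q\neq$ Kac, so the label should be fixed when you carry out the verification.
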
 
\begin{proof} By construction, we have that \[((\varphiH'\otimes \id)(\Delta(f)(1\otimes g)))_q = (\varphiH_q\circ F_q\otimes \id)(\Delta_q(f_q)(1\otimes g_q)),\qquad 0<q\leq 1.\] But one easily verifies that, for $0<q<1$, the functional $\varphiH_q'$ coincides with the right invariant integral for $(C_0(G_q),\Delta_q)$, hence the right hand side equals $\varphiH_q'(f_q)g_q = (\varphiH'(f)g)_q$ for $0<q<1$. By continuity (or a direct verification), this also holds at $q=1$. Hence \eqref{EqInvPsi} holds. 
\end{proof} 

Consider now the Hilbert $C_0((0,1\rbrack)$-module $\mathscr{F}$ obtained by completing $C_c(G)$ with respect to the inner product \[\langle x,y\rangle = \varphiH'(x^*y).\] Then the left $C_0(G)$-module structure on $C_c(G)$ extends to a $^*$-representation of $C_0(G)$ on $\mathscr{F}$. 

Similarly, we can consider the completion of $C_c(G\underset{(0,1\rbrack}{\times} G)$ with respect to $\varphiH'\otimes \varphiH' = \varphiH'\circ (\varphiH'\otimes \id)$, and the Hilbert module completion can be identified with $\mathscr{F}\underset{C_0((0,1\rbrack)}{\otimes}\mathscr{F}$. 

\begin{Theorem} There exists a unique continuous field of multiplicative unitaries \[V \in \mathcal{L}(\mathscr{F}\underset{C_0((0,1\rbrack)}{\otimes}\mathscr{F})\] such that \[V(x\otimes y) = \Delta(x)(1\otimes y),\quad x,y\in C_c(G).\]
\end{Theorem}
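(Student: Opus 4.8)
The plan is to construct $V$ fiberwise and then glue using continuity of the field. First I would observe that the formula $V(x\otimes y) = \Delta(x)(1\otimes y)$ for $x,y\in C_c(G)$ should be shown to define an isometry on the dense submodule $C_c(G)\aotimes C_c(G)$ of $\mathscr{F}\underset{C_0((0,1\rbrack)}{\otimes}\mathscr{F}$. For isometry, one computes
\[
\langle V(x\otimes y), V(x'\otimes y')\rangle = (\varphiH'\otimes\varphiH')\big((1\otimes y^*)\Delta(x^*x')(1\otimes y')\big),
\]
and by Lemma \ref{LemInvPsi} (applied after recognising $(\varphiH'\otimes\id)(\Delta(x^*x')(1\otimes y^*y')) = \varphiH'(x^*x')\,y^*y'$, using that $\varphiH'$ is the right invariant integral on each fiber) this equals $\varphiH'(x^*x')\varphiH'(y^*y') = \langle x\otimes y, x'\otimes y'\rangle$. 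One must be a little careful here: Lemma \ref{LemInvPsi} is stated with $g\in C_c(G)$ and the product $(1\otimes g)$ multiplied on the right, so I would first reduce the computation to that shape by moving the $1\otimes y^*$ through the $C_0((0,1\rbrack)$-linear map; since $\varphiH'$ is $C_0((0,1\rbrack)$-linear and $y^*y'\in C_c(G)$, this is routine.

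Next, I would check that $V$ is surjective onto a dense subspace, hence unitary. This follows from bisimplifiability: since $[\Delta(C_0(G))(1\otimes C_0(G))] = C_0(G)\underset{C_0((0,1\rbrack)}{\otimes}C_0(G)$, and more precisely $\Delta(C_c(G))(1\otimes C_c(G))$ spans a dense subspace (using the $C_c$ version established in the proof of the bisimplifiability theorem together with local units), the range of $V$ is dense, so $V$ extends to a unitary in $\mathcal{L}(\mathscr{F}\underset{C_0((0,1\rbrack)}{\otimes}\mathscr{F})$. Uniqueness is immediate from density of $C_c(G)\aotimes C_c(G)$.

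It then remains to see $V$ is a continuous field of multiplicative unitaries, i.e.\ that it localises at each $q$ to a multiplicative unitary $V_q$ on $\mathscr{F}_q\otimes\mathscr{F}_q = L^2(G_q)\otimes L^2(G_q)$, and that the pentagon equation holds. The localisation $V_q$ is characterised by $V_q(x_q\otimes y_q) = \Delta_q(x_q)(1\otimes y_q)$, which is exactly the standard Kac--Takesaki operator (regular representation) attached to $(C_0(G_q),\Delta_q)$ — for $0<q<1$ the multiplicative unitary of the discrete dual of $SU_q(2)$, and for $q=1$ the multiplicative unitary of the locally compact group $G_1$. Hence each $V_q$ is a multiplicative unitary by classical facts, so $V$ is a $C_0((0,1\rbrack)$-linear unitary whose fibers are multiplicative unitaries; the pentagon equation $V_{12}V_{13}V_{23} = V_{23}V_{12}$ holds fiberwise, hence holds for $V$ since $\mathcal{L}$ of the triple interior tensor product embeds into the bounded product over $q$ of the fiber operators (the field being continuous and the relevant submodules dense). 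I would phrase this last point via Proposition \ref{PropGen}/the norm identity \eqref{EqNormSup}, or simply note that an operator identity that holds in every fiber of a continuous field holds globally.

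The main obstacle I expect is the bookkeeping in the isometry computation: matching the asymmetric statement of Lemma \ref{LemInvPsi} (which has $g$ on one side and computes $(\varphiH'\otimes\id)$ of $\Delta(f)(1\otimes g)$) to the two-sided expression $(1\otimes y^*)\Delta(x^*x')(1\otimes y')$, and in particular justifying that $\varphiH'$ really is the right-invariant functional on the whole field — the proof of Lemma \ref{LemInvPsi} only records invariance in the weak form \eqref{EqInvPsi}, so one may need to invoke that $\varphiH_q'$ is \emph{the} right Haar weight on each $(C_0(G_q),\Delta_q)$ (true for $0<q<1$ by Podleś--Woronowicz and for $q=1$ by the explicit Haar measure $\mu_1$) to get the stronger identity $(\varphiH'\otimes\id)(\Delta(f)g') = \varphiH'(f)\,g'$ needed when $g'$ is not of the special form $1\otimes g$. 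Everything else is soft: density, continuity of the field, and transport of classical multiplicative-unitary facts to the fibers.
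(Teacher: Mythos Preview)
Your isometry computation is correct and is what the paper does: Lemma~\ref{LemInvPsi} together with the fact that the slice map $(\varphiH'\otimes\id)$ is a $(1\otimes C_0(G))$-bimodule map is already enough to handle the two-sided expression $(1\otimes y^*)\Delta(x^*x')(1\otimes y')$. You pull the $1\otimes y^*$ through the slice map and then apply the lemma to $\Delta(x^*x')(1\otimes y')$, which \emph{is} of the required form. So the ``stronger identity'' you worry about is never needed, and the obstacle you flag does not arise.

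The genuine gap is in your surjectivity step. Bisimplifiability gives density of the span of $\Delta(x)(1\otimes y)$ in the $C^*$-norm of $C_0(G\underset{(0,1\rbrack}{\times} G)$, but what you need is density in the Hilbert-module norm of $\mathscr{F}\underset{C_0((0,1\rbrack)}{\otimes}\mathscr{F}$, and these are not the same since $\varphiH'$ is unbounded. Your appeal to local units does not close this: if $s_n\to z$ in $C^*$-norm and $e$ is a local unit for $z$, then $s_n e\to z$ with controlled Hilbert-module norm, but $s_n e$ need no longer lie in the range of $V$, so you have not shown that range is dense. The paper sidesteps this entirely: once $V$ is an isometry, it notes that each localisation $V_q$ is the right multiplicative unitary of the locally compact quantum group $(C_0(G_q),\Delta_q)$ and hence unitary, so $p=1-VV^*\in M(\mathcal{K}(\mathscr{F}\underset{C_0((0,1\rbrack)}{\otimes}\mathscr{F}))$ satisfies $p_q=0$ for all $q$, whence $p=0$. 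You already carry out exactly this fiberwise identification in your third paragraph --- just use it for unitarity and drop the bisimplifiability detour.

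For the pentagon equation the paper simply says it follows from the definition of $V$, i.e.\ directly from coassociativity of $\Delta$; your fiberwise verification also works but is unnecessary.
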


The fact that $V$ is a continuous field of multiplicative unitaries simply means that $V$ satisfies the pentagon equation, see \cite[D\'{e}finition 4.6]{Bla96}.

\begin{proof} By Lemma \ref{LemInvPsi}, we find that $V$ exists as an isometry. As the localisations $V_q$ coincide with the right multiplicative unitaries of the locally compact quantum groups $(C_0(G_q),\Delta_q)$, it follows that the $V_q$ are unitaries. Hence, using that $\mathcal{K}(\mathscr{F}\underset{C_0((0,1\rbrack)}{\otimes} \mathscr{F})$ is a $C_0(Y)$-algebra with fibers the compact operators on the $\mathscr{F}_y\otimes \mathscr{F}_y$, we find that\[p = 1-VV^* \in M(\mathcal{K}(\mathscr{F}\underset{C_0((0,1\rbrack)}{\otimes} \mathscr{F}))\] satisfies $p_y = 0$ for all $y$. Hence $p=0$, and $V$ a unitary. 

The pentagon equation for $V$ follows immediately from its definition. 
\end{proof}

\end{document}